\documentclass{amsart}
\usepackage{amsthm,amsmath, amsfonts, amssymb, comment}
\newtheorem{theorem}{Theorem}[section]
\newtheorem{lemma}[theorem]{Lemma}
\newtheorem{proposition}[theorem]{Proposition}
\newtheorem{corollary}[theorem]{Corollary}
\newtheorem{question}[theorem]{Question}

\theoremstyle{definition}
\newtheorem{definition}{Definition}[section]
\newtheorem*{programme}{Program}
\theoremstyle{remark}

\usepackage{mathrsfs}
\usepackage[english]{babel}
\usepackage{graphicx}   
\usepackage{tikz}
\usepackage{subcaption}
\usepackage{url}        
\usepackage{bm}         
\usepackage{multirow}
\usepackage{booktabs}
\usepackage{algorithm}
\usepackage{algorithmic}
\usepackage{enumerate}

%\usepackage{fancyhdr} 
%\pagestyle{fancy}
%\lhead{}
%\chead{}
%\lfoot{}
%\cfoot{}
%\rfoot{}
\usepackage[colorlinks, linkcolor=black]{hyperref}

\providecommand*{\remarkautorefname{Remark}}
\hypersetup{colorlinks, bookmarks, unicode} 
\usepackage{multicol}
\usepackage[noadjust]{cite}
\usepackage{indentfirst}

\begin{document}

\title[Extremely Amenable Automorphism Groups]{Extremely Amenable Automorphism Groups of Countable Structures}

\author{Mahmood Etedadialiabadi}
\address{1979 Milky Way, Verona, WI 53593, U.S.A.}
\email{mahmood.etedadi@gmail.com}

\author{Su Gao}
\address{School of Mathematical Sciences and LPMC, Nankai University, Tianjin 300071, P.R. China}
\email{sgao@nankai.edu.cn}
\thanks{S. Gao and the F. Li acknowledge the partial support of their research by the National Natural Science Foundation of China (NSFC) grants 12271263 and 12250710128. R. Li acknowledges the partial support of his research by the NSFC grant 124B2001. S. Gao, F. Li and R. Li attended the Tianyuan Workshop on Computability Theory and Descriptive Set Theory in June 2025 when a part of this research was conducted, and we acknowledge the support by the Tianyuan Mathematics Research Center.}
\subjclass[2020]{22F05, 54H05}
\author{Feng Li}
\address{School of Mathematical Sciences and LPMC, Nankai University, Tianjin 300071, P.R. China}
\email{fengli@mail.nankai.edu.cn}

\author{Ruiwen Li}
\address{School of Mathematical Sciences and LPMC, Nankai University, Tianjin 300071, P.R. China}
\email{rwli@mail.nankai.edu.cn}

%\date{\today}

\begin{abstract}  In this paper we address the question: How many pairwise non-isomorphic extremely amenable groups are there which are separable metrizable or even Polish? We show that there are continuum many such groups. In fact we construct continuum many pairwise non-isomorphic extremely amenable groups as automorphism groups of countable structures. We also consider this classification problem from the point of view of descriptive set theory by showing that the class of all extremely amenable closed subgroups of $S_\infty$ is Borel and their isomorphism relation is more complex than any isomorphism relation of countable structures in the Borel reducibility hierarchy. 
\end{abstract}

\maketitle
%\tableofcontents
\section{Introduction}    

A topological group $G$ is extremely amenable if for every continuous action of $G$ on a compact Hausdorff space $X$, there is a fixed point $x\in X$, i.e., for all $g\in G$, $g\cdot x=x$. This is a remarkable property for a topological group to have. In fact, Veech \cite{Veech} proved that no locally compact topological groups can be extremely amenable. This explained why extremely amenable groups were not present in the early development of topological group theory, for instance in the study of Hilbert's fifth problem. 

Ever since Mitchell \cite{Mitchell} asked in 1970 about the existence of non-trivial extremely amenable topological groups, many examples of such groups have been discovered. At first, these groups were thought to be pathological since they had to be obtained via intricate constructions. However, as time goes by, more familiar groups have been shown to be extremely amenable. The extremely amenable groups we now know of can be loosely sorted into three classes as follows.

The first examples of extremely amenable groups were given by Herer--Christensen \cite{HC}. 
They constructed such groups which are abelian Polish groups of the form $L_0(\mu, \mathbb{R})$, which is the additive group of all $\mathbb{R}$-valued $\mu$-measurable functions, where $\mu$ is a pathological submeasure. Later, other examples of the more general form $L_0(\mu, G)$, where $G$ is a topological group, were shown to be extremely amenable. Among them, Furstenberg--Weiss (in unpublished work) and independently Glasner \cite{Glasner} showed that the group of measurable maps from the unit interval $\mathbb{I}$ with the Lebesgue measure $\lambda$ to the unit circle $\mathbb{T}$ is extremely amenable.  Farah--Solecki \cite{FS2008} generalized it by showing that $L_0(\mu, G)$ is extremely amenable when $G$ is a compact solvable topological group and $\mu$ is a so-called diffused submeasure. Sabok \cite{Marcin} generalized further to arbitrary solvable topological groups $G$ and diffused submeaures $\mu$. 
% In Sabok's paper, he incorrectly stated that Pestov showed the extreme amenability for diffused submeasure $\mu$ and amenable $G$. Pestov never stated or proved this claim.

In a different thread of research, many operator groups were also shown to be extremely amenable. Famously, Gromov--Milman \cite{GM} showed that the unitary group $U(H)$ of the infinite dimensional separable Hilbert space $H$ is extremely amenable. Pestov proved the extreme amenability for the group $\mbox{Homeo}_+(\mathbb{I})$ of all order-preserving homeomorphisms of the unit interval (\cite{Pestov98}) and for the group $\mbox{\rm Iso}(\mathbb{U})$ of all isometries of the universal Urysohn metric space $\mathbb{U}$ (\cite{Pestov02}).  Giordano--Pestov \cite{GP} showed the extreme amenability for $\mbox{\rm Aut}(\mathbb{I},\lambda)$, the group of all measure-preserving automorphisms of the Lebesgue space.

A third class of topological groups known to be extremely amenable in the literature comes from automorphism groups of countable structures. Pestov \cite{Pestov98} gave the first example of such a group, which is ${\rm Aut}(\mathbb{Q}, <)$, the automorphism group of a countable dense linear order without endpoints. Motivated by the idea in \cite{Pestov98}, Kechris--Pestov--Todorcevic \cite{KPT} gave a complete characterization of when the automorphism group of a countable structure is extemely amenable, which is now called the KPT correspondence. The KPT correspondence connects the extreme amenability to the so-called structural Ramsey property of Fra\"iss\'e classes and gives a number of examples of extremely amenable groups.  Through the work of Ne\v{s}et\v{r}il--R\"{o}dl \cite{NR77} \cite{NR83} and Ne\v{s}et\v{r}il \cite{Nes07} on Ramsey classes, we now have extremely amenable groups of the form $\mbox{\rm Aut}(M)$, where $M$ is the Fra\"{\i}ss\'{e} limit of a Fra\"{\i}ss\'{e} class of countable structures. Such structures include ordered graphs, ordered $K_n$-free graphs, ordered $\mathbb{Q}$-metric spaces, and ordered $\mathbb{N}$-metric spaces, etc. The KPT correspondence has also been generalized to the context of continuous logic by Melleray--Tsankov \cite{MT}.

Despite the diversity of these examples, not much was known about the number of distinct isomorphism types of extremely amenable groups. Among the above examples, only finitely many pairwise non-isomorphic extremely amenable groups can be identified. This motivates us to study the following question: How many pairwise non-isomorphic extremely amenable groups are there? 

The only result that explicitly addressed this question in the literature is a theorem of Pestov in \cite{Pestov98}, which states that for each infinite cardinal $\kappa$, there is an extremely amenable topological group of weight $\kappa$. Since the weight is an isomorphism invariant, we obtain many pairwise non-isomorphic extremely amenable groups. However, we would like to study this question in the context of more familiar groups, in particular separable metrizable groups and Polish groups. The theorem of Pestov gives only one such example. 

Our main results of the paper confirm that there are continuum many non-isomorphic extremely amenable groups which are either Polish or separable metrizable. Our study focuses on the third class of examples in the above review. 

The first main result of this paper confirms that there are continuum many pairwise non-isomorphic extremely amenable groups that are separable and metrizable. For each countable distance value set $\Delta$ (to be defined in Section~\ref{sec:2}), we consider the $\Delta$-metric Urysohn space $\mathbb{U}_\Delta$. The group $\mbox{\rm Iso}(\mathbb{U}_\Delta)$ of all isometries of $\mathbb{U}_\Delta$, equipped with the topology of pointwise convergence in metric, is a separable metrizable group. We show the following theorems.

\begin{theorem}\label{thm:main1} Let $\Delta$ be a countable distance value set. Then the following hold:
\begin{enumerate}
\item[\rm (1)] $\mbox{\rm Iso}(\mathbb{U}_\Delta)$ is extremely amenable if and only if $\inf\Delta=0$.
\item[\rm (2)] $\mbox{\rm Iso}(\mathbb{U}_\Delta)$ is Polish if and only if $\inf\Delta>0$.
\item[\rm (3)] If $\inf\Delta=0$, then $\mbox{\rm Iso}(\mathbb{U}_\Delta)$ is isomorphic to a dense subgroup of either $\mbox{\rm Iso}(\mathbb{U})$ or $\mbox{\rm Iso}(\mathbb{U}_1)$, where $\mathbb{U}_1$ is the Urysohn sphere, i.e., the unit sphere of $\mathbb{U}$.
\end{enumerate}
\end{theorem}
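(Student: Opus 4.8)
The plan is to dispose of the two easy directions first and then concentrate on the dense embedding of statement~(3), from which the remaining implications follow. For the forward direction of~(2), I would note that when $\inf\Delta>0$ the space $\mathbb{U}_\Delta$ is uniformly discrete, so the topology of pointwise convergence in metric coincides with the subspace topology inherited from $\mbox{\rm Sym}(\mathbb{U}_\Delta)\cong S_\infty$; since preserving each pairwise distance is a closed condition, $\mbox{\rm Iso}(\mathbb{U}_\Delta)$ is a closed subgroup of $S_\infty$ and hence Polish. For the forward direction of~(1), I would run the standard ``no invariant linear order'' argument: $\mbox{\rm Iso}(\mathbb{U}_\Delta)$ acts continuously on the compact space of linear orderings of the countable set $\mathbb{U}_\Delta$, and by ultrahomogeneity any two points can be interchanged by an isometry, so no ordering is invariant; thus there is no fixed point and the group is not extremely amenable.

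The core is statement~(3). First I would identify the completion $\overline{\mathbb{U}_\Delta}$. Using the defining (four-values) properties of a distance value set together with $\inf\Delta=0$, I would show that the topological closure of $\Delta$ in $[0,\infty)$ is an interval, either $[0,\infty)$ when $\Delta$ is unbounded or $[0,\sup\Delta]$ when $\Delta$ is bounded; indeed, a gap in $\Delta$ above a scale where $\Delta$ is already arbitrarily fine can be used to build a non-amalgamable pair, contradicting the four-values condition. I would then conclude that $\overline{\mathbb{U}_\Delta}$ is a complete separable ultrahomogeneous metric space realizing all distances in that interval, hence isometric to $\mathbb{U}$ in the unbounded case and, after rescaling by $1/\sup\Delta$, to the Urysohn sphere $\mathbb{U}_1$ in the bounded case. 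Each isometry of $\mathbb{U}_\Delta$ extends uniquely to its completion, yielding a topological group embedding $\mbox{\rm Iso}(\mathbb{U}_\Delta)\hookrightarrow\mbox{\rm Iso}(\overline{\mathbb{U}_\Delta})$; the real work is to prove that the image is dense.

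With~(3) in hand the remaining directions are short. For the backward direction of~(1), Pestov's theorems give that $\mbox{\rm Iso}(\mathbb{U})$ and $\mbox{\rm Iso}(\mathbb{U}_1)$ are extremely amenable, and since extreme amenability passes between a topological group and its dense subgroups, $\mbox{\rm Iso}(\mathbb{U}_\Delta)$ is extremely amenable. For the backward direction of~(2), I would argue by contradiction: if $\mbox{\rm Iso}(\mathbb{U}_\Delta)$ were Polish it would be $G_\delta$ in the ambient Polish group $\mbox{\rm Iso}(\overline{\mathbb{U}_\Delta})$, so a dense $G_\delta$ subgroup would be comeager and therefore equal to the whole group; but $\mbox{\rm Iso}(\mathbb{U}_\Delta)$ is a proper subgroup, since an isometry of the completion that moves a point of $\mathbb{U}_\Delta$ off $\mathbb{U}_\Delta$ does not lie in the image, a contradiction.

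The main obstacle is the density of the image in~(3). Given an isometry $g$ of $\overline{\mathbb{U}_\Delta}$ and a basic neighborhood determined by finitely many points $x_1,\dots,x_n$ and a tolerance $\varepsilon$, I must produce $h\in\mbox{\rm Iso}(\mathbb{U}_\Delta)$ with $d(h(x_i),g(x_i))<\varepsilon$ for all $i$. The difficulty is that a naive approximation of the $x_i$ and of the $g(x_i)$ by points of $\mathbb{U}_\Delta$ need not yield matching distance matrices, so the induced finite map need not be a partial isometry. The remedy is to first perturb $(x_1,\dots,x_n)$ to a configuration in $\mathbb{U}_\Delta$ whose mutual distances lie in $\Delta$ and approximate the original ones, then use the ultrahomogeneity of $\mathbb{U}_\Delta$ to realize the \emph{same} $\Delta$-distance matrix by a configuration $(y_1,\dots,y_n)$ in $\mathbb{U}_\Delta$ near $(g(x_1),\dots,g(x_n))$, and finally extend the genuine partial isometry $x_i\mapsto y_i$ to an element of $\mbox{\rm Iso}(\mathbb{U}_\Delta)$. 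It is precisely here that the hypothesis $\inf\Delta=0$ is indispensable: it guarantees that arbitrary real-distance configurations can be approximated arbitrarily well by $\Delta$-admissible ones, which fails when the distance values are bounded away from $0$.
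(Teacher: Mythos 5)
Your architecture is genuinely different from the paper's, and for the most part it is sound. The paper proves the backward direction of (1) directly from structural Ramsey theory: Ne\v{s}et\v{r}il's theorem gives the Ramsey property for finite ordered $\Delta$-metric spaces, the KPT correspondence gives extreme amenability of $\mbox{\rm Aut}(\mathbb{U}_\Delta^<)$, and a perturbation proposition (very much like your ``remedy'' paragraph, but for order-preserving partial isometries) shows $\mbox{\rm Aut}(\mathbb{U}_\Delta^<)$ is dense in $\mbox{\rm Iso}(\mathbb{U}_\Delta)$, so the easy dense-image lemma finishes; part (3) is then handled by citing Vershik and Cameron--Vershik. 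You instead make (3) the centerpiece and derive (1) from Pestov's theorems on $\mbox{\rm Iso}(\mathbb{U})$ and $\mbox{\rm Iso}(\mathbb{U}_1)$ together with the fact that a dense subgroup of an extremely amenable group is extremely amenable. That inheritance is true, but be aware it is the \emph{harder} direction of the dense-subgroup equivalence (one identifies the Samuel compactifications of the group and of a dense subgroup); it needs a proof or citation, unlike the trivial dense-image direction the paper uses. Your avoidance of structural Ramsey theory is what this route buys; what it costs is reliance on Pestov's two theorems plus this inheritance fact. Your contrapositive arguments for (1)-forward (the flow of linear orderings, legitimate because the topology is the permutation-group topology when $\inf\Delta>0$) and for (2) (a Polish subgroup is $G_\delta$, and a dense $G_\delta$ subgroup is comeager, hence everything) are correct and essentially equivalent to what the paper does.

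The genuine gap is your identification of the completion. ``Complete, separable, ultrahomogeneous, and realizing all distances in the interval'' does \emph{not} characterize $\mathbb{U}$: the Hilbert space $\ell^2$ has all of these properties (any isometry between finite subsets extends to an affine isometry of $\ell^2$), yet it is not isometric to $\mathbb{U}$ --- for instance, the four-point metric space with one point at distance $1$ from three points that are pairwise at distance $2$ embeds in $\mathbb{U}$ but not in $\ell^2$. The correct characterization requires ultrahomogeneity \emph{together with universality}, equivalently finite injectivity (every abstract one-point metric extension of a finite subset is realized), and proving that $\overline{\mathbb{U}_\Delta}$ has this property is itself a nontrivial approximation argument with error control --- essentially Vershik's theorem that $\mathbb{U}_\Delta$ is dense in $\mathbb{U}$, which is exactly what the paper cites. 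A related looseness occurs in your density argument: ultrahomogeneity of $\mathbb{U}_\Delta$ alone cannot produce a copy of a given $\Delta$-distance matrix \emph{near prescribed targets}; what does this is the Fra\"iss\'e extension property, applied to an abstract finite $\Delta$-metric space in which the new exact configuration is tied to the approximating points by small distances taken from $\Delta$ (this is where $\inf\Delta=0$ enters, and it is precisely the device in the paper's Proposition~\ref{Density}). Finally, the density of $\Delta$ in $(0,\sup\Delta)$ needs no ``four-values'' or amalgamation argument: it is immediate from closure under truncated addition once $\inf\Delta=0$. With the completion step repaired (either by proving finite injectivity of $\overline{\mathbb{U}_\Delta}$ or by citing Vershik), your proof goes through.
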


\begin{theorem}\label{thm:main2} Let $\Delta$ and $\Lambda$ be countable distance value sets. Then the following are equivalent:
\begin{enumerate}
\item[\rm (i)] $\mbox{\rm Iso}(\mathbb{U}_\Delta)$ and $\mbox{\rm Iso}(\mathbb{U}_\Lambda)$ are isomorphic as topological groups.
\item[\rm (ii)] $\mbox{\rm Iso}(\mathbb{U}_\Delta)$ and $\mbox{\rm Iso}(\mathbb{U}_\Lambda)$ are isomorphic as abstract groups.
\item[\rm (iii)] $\Delta$ and $\Lambda$ are equivalent.
\end{enumerate}
\end{theorem}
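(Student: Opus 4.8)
The plan is to establish the cycle of implications $(iii)\Rightarrow(i)\Rightarrow(ii)\Rightarrow(iii)$. The implication $(i)\Rightarrow(ii)$ is immediate, since any isomorphism of topological groups is a fortiori an isomorphism of abstract groups.

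For $(iii)\Rightarrow(i)$, I would unwind the definition of equivalence from Section~\ref{sec:2}: an equivalence is an order isomorphism $\phi\colon\Delta\to\Lambda$ fixing $0$ and preserving the metric (triangle) structure, so that a triple $(r,s,t)$ of values realizes a triangle in $\Delta$ exactly when $(\phi r,\phi s,\phi t)$ does in $\Lambda$. Such a $\phi$ transports each finite $\Delta$-metric space to a finite $\Lambda$-metric space and back, hence induces an isomorphism of the two amalgamation classes. By the universality and ultrahomogeneity characterizing the Urysohn spaces as Fra\"{\i}ss\'e limits, $\phi$ lifts to a bijection $\Phi$ between the countable Fra\"{\i}ss\'e limits satisfying $d_\Lambda(\Phi x,\Phi y)=\phi(d_\Delta(x,y))$; since $\phi$ is increasing and fixes $0$, it is continuous at $0$ (here one uses that equivalence preserves $\inf=0$), so $\Phi$ extends by uniform continuity to a homeomorphism $\Phi\colon\mathbb{U}_\Delta\to\mathbb{U}_\Lambda$ of the completions obeying the same identity, the extension being trivial in the uniformly discrete case $\inf\Delta>0$. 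Conjugation $g\mapsto\Phi g\Phi^{-1}$ is then a group isomorphism $\mbox{\rm Iso}(\mathbb{U}_\Delta)\to\mbox{\rm Iso}(\mathbb{U}_\Lambda)$; because $\Phi$ matches the metric structure, it carries each basic neighborhood for the topology of pointwise convergence in metric to such a neighborhood on the other side, so the conjugation map and its inverse are continuous, yielding a topological isomorphism.

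The content of the theorem is $(ii)\Rightarrow(iii)$, which I would prove in two stages starting from an abstract isomorphism $\alpha\colon\mbox{\rm Iso}(\mathbb{U}_\Delta)\to\mbox{\rm Iso}(\mathbb{U}_\Lambda)$. In the first stage I upgrade $\alpha$ to a topological isomorphism via automatic continuity: I would verify a Steinhaus-type property for $\mbox{\rm Iso}(\mathbb{U}_\Delta)$ directly from the amalgamation and extension properties of $\mathbb{U}_\Delta$, forcing every abstract homomorphism from either group into a separable group to be continuous, so that $\alpha$ and $\alpha^{-1}$ are both continuous. In the second stage I reconstruct the distance data from the topological group: the point stabilizers admit a conjugacy-invariant, topological-group-theoretic description, the coset space of a point stabilizer recovers $\mathbb{U}_\Delta$ as an $\mbox{\rm Iso}(\mathbb{U}_\Delta)$-space, and by ultrahomogeneity the orbits of pairs of distinct points are in canonical bijection with $\Delta\setminus\{0\}$, with the order and triangle structure read off from how these pair-orbits appear inside orbits of triples. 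Transporting this reconstruction through $\alpha$ then yields an order isomorphism $\phi\colon\Delta\to\Lambda$ preserving the triangle structure, i.e. an equivalence.

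The main obstacle is the automatic continuity step, and especially its uniformity across all countable distance value sets: when $\inf\Delta=0$ the group $\mbox{\rm Iso}(\mathbb{U}_\Delta)$ is only separable metrizable and not Polish by Theorem~\ref{thm:main1}(2), so the usual Polish-group machinery for the Steinhaus property is unavailable and must be replaced by a hands-on argument exploiting the geometry of $\mathbb{U}_\Delta$. A secondary difficulty is giving a genuinely conjugacy-invariant, purely topological-group-theoretic characterization of the point stabilizers, so that the reconstruction in the second stage is actually transported by $\alpha$; by contrast, once continuity and this characterization are in place, recovering the order and triangle structure of $\Delta$, hence the equivalence, is routine.
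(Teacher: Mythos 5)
Your implications (i)$\Rightarrow$(ii) and (iii)$\Rightarrow$(i) are fine and essentially match the paper: the paper realizes the equivalence $f\colon\Delta\to\Lambda$ by re-metrizing, $d_f(x,y)=f(d(x,y))$, and noting $(\mathbb{U}_\Delta,d_f)\cong\mathbb{U}_\Lambda$, which is your transport map $\Phi$ (your detour through completions is unnecessary, since $\mbox{\rm Iso}(\mathbb{U}_\Delta)$ is the isometry group of the countable limit itself, and your claim that an equivalence is an order isomorphism is not part of the definition --- it is a consequence, via Theorem~\ref{thm:simapprox}, and only when $\inf\Delta=0$). The genuine gap is in your Stage~1 of (ii)$\Rightarrow$(iii). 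The property you propose to verify --- a Steinhaus-type property for $\mbox{\rm Iso}(\mathbb{U}_\Delta)$ in its topology of pointwise convergence in metric, forcing every abstract homomorphism into a separable group to be continuous --- is provably false whenever $\inf\Delta=0$. Indeed, $\mbox{\rm Iso}(\mathbb{U}_\Delta)$ and $\mbox{\rm Aut}(\mathbb{U}_\Delta)$ are the same abstract group, and the identity map from $\mbox{\rm Iso}(\mathbb{U}_\Delta)$ (metric topology) to $\mbox{\rm Aut}(\mathbb{U}_\Delta)$ (Polish, hence separable, pointwise-discrete topology) is an abstract isomorphism into a separable group; if it were continuous the two topologies would coincide and $\mbox{\rm Iso}(\mathbb{U}_\Delta)$ would be Polish, contradicting Theorem~\ref{thm:main1res}(2). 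So no ``hands-on'' argument can rescue automatic continuity for the metric topology: the obstacle you flag as the main difficulty is not merely hard, it is insurmountable as stated. (Your Stage~2 has a related problem: in the metric topology point stabilizers are not open, so the open-subgroup reconstruction machinery does not apply directly either.)

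The paper's proof of (ii)$\Rightarrow$(iii) sidesteps all of this with a one-line reduction: since $\mbox{\rm Iso}(\mathbb{U}_\Delta)$ and $\mbox{\rm Aut}(\mathbb{U}_\Delta)$ are \emph{identical abstract groups}, an abstract isomorphism $\mbox{\rm Iso}(\mathbb{U}_\Delta)\cong\mbox{\rm Iso}(\mathbb{U}_\Lambda)$ is the same thing as an abstract isomorphism $\mbox{\rm Aut}(\mathbb{U}_\Delta)\cong\mbox{\rm Aut}(\mathbb{U}_\Lambda)$, and Theorem~\ref{thm:char} (from \cite{EGLMM}) already says this implies $\Delta$ and $\Lambda$ are equivalent. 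The automatic-continuity idea does enter, but one level down and in the \emph{other} topology: $\mbox{\rm Aut}(\mathbb{U}_\Delta)$ with its Polish topology has ample generics (Solecki), so Kechris--Rosendal automatic continuity applies there; that is where your Stage~1 and Stage~2 legitimately live, and it is exactly how the cited theorem is proved. If you want a self-contained argument, you should first pass from $\mbox{\rm Iso}$ to $\mbox{\rm Aut}$ as abstract groups, run automatic continuity and the stabilizer reconstruction in the Polish topology of $\mbox{\rm Aut}(\mathbb{U}_\Delta)$, and only then return to the metric topology via the (iii)$\Rightarrow$(i) transport.
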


\begin{theorem}\label{thm:main3} There are continnum many pairwise inequivalent countable distance value sets $\Delta$ that are dense in $(0,+\infty)$. Consequently, there are continuum many pairwise non-isomorphic extremely amenable groups $\mbox{\rm Iso}(\mathbb{U}_\Delta)$ that are separable metrizable.
\end{theorem}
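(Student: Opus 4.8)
The plan is to reduce the theorem to its first, purely combinatorial assertion and then to invoke the two preceding theorems. Once we have produced continuum many pairwise inequivalent countable distance value sets $\Delta$ that are dense in $(0,+\infty)$, each such $\Delta$ satisfies $\inf\Delta=0$, so $\mathrm{Iso}(\mathbb{U}_\Delta)$ is extremely amenable by Theorem~\ref{thm:main1}(1); it is separable metrizable because $\Delta$ is countable and $\mathbb{U}_\Delta$ is separable; and by Theorem~\ref{thm:main2} the assignment $\Delta\mapsto\mathrm{Iso}(\mathbb{U}_\Delta)$ sends inequivalent sets to non-isomorphic groups. Thus the whole content is the construction of continuum many pairwise inequivalent dense $\Delta$'s, and, since there are only continuum many countable subsets of $\mathbb{R}$ to begin with, continuum many is also the most one could hope for.

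For the construction I would use positive parts of rank-two additive subgroups of $\mathbb{R}$. For each irrational $\alpha>0$ set $G_\alpha=\mathbb{Q}+\mathbb{Q}\alpha$ and $\Delta_\alpha=G_\alpha\cap(0,+\infty)$. Each $\Delta_\alpha$ is countable and, since $\mathbb{Q}^{>0}\subseteq\Delta_\alpha$, dense in $(0,+\infty)$. That $\Delta_\alpha$ is a genuine distance value set should follow directly from the definition in Section~\ref{sec:2}: the set $\Delta_\alpha\cup\{0\}$ is closed under addition and under positive differences (if $a>b$ lie in $\Delta_\alpha$ then $a-b\in G_\alpha\cap(0,+\infty)=\Delta_\alpha$), which is exactly what is needed for every admissible triangle over $\Delta_\alpha$ to be realizable and for the associated Fra\"iss\'e class of finite $\Delta_\alpha$-metric spaces to amalgamate.

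The separation of the family rests on the following principle, which I expect to be the technical heart of the argument: an equivalence between two such sets must be the restriction of a scaling. Concretely, suppose $\Delta_\alpha$ and $\Delta_\beta$ are equivalent, witnessed by an order isomorphism $f\colon\Delta_\alpha\to\Delta_\beta$ preserving the triangle structure. Because $G_\alpha\cup\{0\}$ is closed under addition, the sum $a+b$ is characterized order-theoretically as the largest element forming an admissible closed triangle with $a$ and $b$; since $f$ preserves this data, $f$ is additive, and hence extends to an order-preserving group isomorphism $G_\alpha\to G_\beta$. An order-preserving additive bijection between dense subgroups of $\mathbb{R}$ is multiplication by a constant $c>0$, so $cG_\alpha=G_\beta$. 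Reading this off on generators, $cG_\alpha=G_\beta$ holds for some $c>0$ precisely when $\beta=\tfrac{r+s\alpha}{p+q\alpha}$ for some rationals $p,q,r,s$, i.e.\ when $\beta$ lies in the orbit of $\alpha$ under the M\"obius action of $\mathrm{PGL}_2(\mathbb{Q})$. As $\mathrm{PGL}_2(\mathbb{Q})$ is countable, every such orbit is countable, so the uncountably many irrationals split into continuum many orbits; choosing one $\alpha$ per orbit yields continuum many pairwise inequivalent sets $\Delta_\alpha$.

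The main obstacle, and the step demanding the most care, is the claim that any equivalence of $\Delta_\alpha$ and $\Delta_\beta$ must respect the additive structure and therefore reduce to a scaling; this is exactly where the precise Section~\ref{sec:2} definition of equivalence has to be deployed, in order to exclude exotic order isomorphisms that might identify sets from different $\mathrm{PGL}_2(\mathbb{Q})$-orbits. A second, more routine, point to pin down is the verification that positive parts of subgroups satisfy the distance-value-set axioms. Everything else---density, the appeal to Theorem~\ref{thm:main1}(1) for extreme amenability, separable metrizability, and the passage from inequivalence to non-isomorphism via Theorem~\ref{thm:main2}---is then immediate.
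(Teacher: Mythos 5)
Your construction is exactly the paper's: the sets $\Delta_\alpha=(\mathbb{Q}+\mathbb{Q}\alpha)\cap(0,+\infty)$, the identification of equivalence among them with the rational M\"obius ($GL_2(\mathbb{Q})$) orbit equivalence of irrationals (Theorem~\ref{thm:Deltaalpha}), the countability-of-orbits count (Corollary~\ref{cor:cont}), and the passage to the group statement via Theorems~\ref{thm:main1} and \ref{thm:main2} all match the paper's route. The verification that each $\Delta_\alpha$ is a countable dense distance value set, and the computation that $cG_\alpha=G_\beta$ for some $c>0$ iff $\beta=(r+s\alpha)/(p+q\alpha)$ with rational entries and invertible matrix, are correct.

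However, there is a genuine gap at precisely the step you call the heart of the argument. The definition of equivalence provides only a \emph{bijection} $f\colon\Delta_\alpha\to\Delta_\beta$ such that $(x,y,z)$ is a $\Delta_\alpha$-triangle iff $(f(x),f(y),f(z))$ is a $\Delta_\beta$-triangle; it does not provide an order isomorphism, yet you take the witness to be one. Your additivity argument needs this: triangle preservation does give the set identity $f\bigl(\Delta_\alpha\cap[|a-b|,a+b]\bigr)=\Delta_\beta\cap[|f(a)-f(b)|,f(a)+f(b)]$, but without monotonicity you cannot conclude that the maximum $a+b$ of the left-hand set is sent to the maximum $f(a)+f(b)$ of the right-hand set, so $f(a+b)=f(a)+f(b)$ does not follow. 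The argument is circular, since order preservation is itself a consequence of the conclusion (that $f$ is a scaling), not a hypothesis. This is exactly what Section~\ref{sec:4} of the paper is devoted to proving: first, a sequence $(\delta_i)$ with $\delta_{i+1}<\delta_i/2$ must map to a sequence with the same property (because $(\delta_{i+1},\delta_{i+1},\delta_i)$ fails to be a triangle only through the inequality $\delta_i>2\delta_{i+1}$), whence $\inf$ of the image set is $0$; then $f$ is continuous away from $0$ (Lemma~\ref{lemma:continuous}); then additivity is proved by a chain argument using continuity and density to rule out the image of $\Delta^{>x+y}$ jumping between $\Lambda^{>f(x)+f(y)}$ and $\Lambda^{<|f(x)-f(y)|}$; and finally linearity $f(x)/x\equiv\mathrm{const}$ is obtained by rational approximation (Theorem~\ref{thm:simapprox}). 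Note also that your stated worry --- excluding ``exotic order isomorphisms'' --- is misdirected: once $f$ is known to be an order isomorphism, your maximum argument does go through; the actual difficulty is excluding triangle-preserving bijections that are not order isomorphisms at all.
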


In fact, all groups in this class are dense in $\mbox{\rm Iso}(\mathbb{U})$; in particular, none of them are Polish.

We then study the case of Polish groups. Here we consider the Fra\"{\i}ss\'{e} class of all finite ordered $\Delta$-metric spaces and its Fra\"{\i}ss\'{e} limit $\mathbb{U}^{<}_{\Delta}$. Regarding $\mathbb{U}^{<}_{\Delta}$ as a countable structure, the automorphism group $\mbox{\rm Aut}(\mathbb{U}^{<}_{\Delta})$, equipped with the topology of pointwise convergence where the underlying space has the discrete topology, is a Polish group. In fact, it is isomorphic to a closed subgroup of $S_\infty$, the infinite permutation group. We use the following consequence of a theorem of Ne\v{s}et\v{r}il \cite{Nes07} (for a more explicit restatement of Ne\v{s}et\v{r}il's theorem, see Theorem 13 of Nguyen Van Th\'e \cite{NVTMAMS}).

\begin{theorem}\label{thm:main4} Let $\Delta$ be a countable distance value set. Then $\mbox{\rm Aut}(\mathbb{U}^{<}_{\Delta})$ is extremely amenable.
\end{theorem}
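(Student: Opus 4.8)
The plan is to deduce extreme amenability directly from the Kechris--Pestov--Todorcevic (KPT) correspondence \cite{KPT}, so that the entire statement reduces to the structural Ramsey property of the relevant Fra\"iss\'e class, which is precisely what the cited theorem of Ne\v{s}et\v{r}il \cite{Nes07}, in the explicit form of Theorem 13 of Nguyen Van Th\'e \cite{NVTMAMS}, supplies. First I would record that, by the notion of distance value set set up in Section~\ref{sec:2}, the class $\mathcal{K}^{<}_\Delta$ of all finite ordered $\Delta$-metric spaces is a Fra\"iss\'e class and that $\mathbb{U}^{<}_\Delta$ is exactly its Fra\"iss\'e limit; in particular $\mathrm{Age}(\mathbb{U}^{<}_\Delta)=\mathcal{K}^{<}_\Delta$ and, since $\mathbb{U}^{<}_\Delta$ is a countable structure, $\mathrm{Aut}(\mathbb{U}^{<}_\Delta)$ is a closed subgroup of $S_\infty$, which is the setting in which the KPT correspondence applies.

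Next I would verify the two hypotheses of the KPT correspondence for $\mathcal{K}^{<}_\Delta$. The first, rigidity, is immediate: every member of $\mathcal{K}^{<}_\Delta$ carries a linear order on a finite underlying set, so its only automorphism is the identity. The second, the Ramsey property, is the substantive ingredient, and here I would invoke Ne\v{s}et\v{r}il's theorem in the form given by Nguyen Van Th\'e, after checking that the combinatorial conditions imposed on a distance value set $\Delta$ in Section~\ref{sec:2} match the hypotheses under which that theorem asserts the Ramsey property for finite ordered $\Delta$-metric spaces. With rigidity and the Ramsey property both in hand, the KPT correspondence yields at once that $\mathrm{Aut}(\mathbb{U}^{<}_\Delta)$ is extremely amenable.

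The only real obstacle is the bookkeeping in the second step: one must confirm that the conditions packaged into the definition of a distance value set are exactly those guaranteeing both that $\mathcal{K}^{<}_\Delta$ amalgamates (so that the Fra\"iss\'e limit $\mathbb{U}^{<}_\Delta$ exists in the first place) and that the class lies within the scope of the Ne\v{s}et\v{r}il--Nguyen Van Th\'e Ramsey theorem. I do not expect to need the ordering property here, since the linear order is part of the signature and rigidity alone suffices for extreme amenability; the expansion property would only be relevant for identifying a universal minimal flow. Once the alignment of definitions is in place, no further computation is required: the conclusion follows formally, with the genuinely hard combinatorics entirely outsourced to \cite{Nes07} and \cite{NVTMAMS}.
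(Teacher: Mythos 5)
Your proposal is correct and follows essentially the same route as the paper: both arguments reduce the statement to Ne\v{s}et\v{r}il's theorem that $\mathcal{K}_\Delta^<$ has the Ramsey property (Theorem~\ref{thm:eaK<}) and then apply the KPT correspondence. The only cosmetic difference is that you invoke the rigid-class form of the correspondence (Theorem~\ref{thm:KPTstrong}), verifying rigidity from the linear order, whereas the paper uses the ordered-class form (Theorem~\ref{thm:KPT}), where that same rigidity observation is built into the hypothesis.
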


We show the following theorems.

\begin{theorem}\label{thm:main5} Let $\Delta$ and $\Lambda$ be countable distance value sets. Then the following are equivalent:
\begin{enumerate}
\item[\rm (i)] $\mbox{\rm Aut}(\mathbb{U}^{<}_{\Delta})$ and $\mbox{\rm Aut}(\mathbb{U}^{<}_{\Lambda})$ are isomorphic as topological groups.
\item[\rm (ii)] $\Delta$ and $\Lambda$ are equivalent.
\end{enumerate}
\end{theorem}

\begin{corollary}\label{thm:main6} There are continnum many pairwise non-isomorphic extremely amenable Polish groups of the form $\mbox{\rm Aut}(\mathbb{U}^{<}_{\Delta})$.
\end{corollary}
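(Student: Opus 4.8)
The plan is to assemble the corollary directly from the three preceding theorems, since each of them supplies exactly one of the three features demanded: being Polish, being extremely amenable, and occurring in continuum-many non-isomorphic copies. No new construction is required, and the entire content of the corollary lies in combining facts that are already in hand.

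First I would invoke Theorem~\ref{thm:main3} to fix a family $\{\Delta_i : i \in I\}$ of countable distance value sets, indexed by a set $I$ of cardinality $2^{\aleph_0}$, that are pairwise inequivalent (the theorem in fact produces such $\Delta_i$ dense in $(0,+\infty)$, though density plays no role here). For each $i \in I$, the associated Fra\"{\i}ss\'{e} limit $\mathbb{U}^{<}_{\Delta_i}$ is a countable structure, so $\mbox{\rm Aut}(\mathbb{U}^{<}_{\Delta_i})$, taken with the topology of pointwise convergence on the discrete underlying set, is a closed subgroup of $S_\infty$ and hence a Polish group, exactly as noted in the discussion preceding Theorem~\ref{thm:main4}.

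Next I would record that each of these groups is extremely amenable, which is precisely Theorem~\ref{thm:main4} applied to $\Delta = \Delta_i$. To see that the groups are pairwise non-isomorphic, I would then apply Theorem~\ref{thm:main5}: for $i \neq j$ the sets $\Delta_i$ and $\Delta_j$ are inequivalent, so $\mbox{\rm Aut}(\mathbb{U}^{<}_{\Delta_i})$ and $\mbox{\rm Aut}(\mathbb{U}^{<}_{\Delta_j})$ fail to be isomorphic as topological groups. Thus $\{\mbox{\rm Aut}(\mathbb{U}^{<}_{\Delta_i}) : i \in I\}$ is a family of continuum many pairwise non-isomorphic extremely amenable Polish groups, which is exactly the assertion.

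Since all the substantive work has been discharged by Theorems~\ref{thm:main3}, \ref{thm:main4}, and~\ref{thm:main5}, there is no genuine obstacle in the corollary itself; it is a formal consequence of the three results. The only point worth flagging is that ``pairwise non-isomorphic'' is to be read in the category of topological groups, matching the statement of Theorem~\ref{thm:main5}. This is the natural notion for Polish groups and is all that is required; unlike the separable metrizable setting of Theorem~\ref{thm:main2}, no claim at the level of abstract group isomorphism is being made.
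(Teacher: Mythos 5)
Your proof is correct and matches the paper's own derivation: the paper likewise obtains this corollary by combining the continuum-many pairwise inequivalent distance value sets (Corollary~\ref{cor:cont}, which underlies the first part of Theorem~\ref{thm:main3}) with the classification in Theorem~\ref{thm:main5} (Theorem~\ref{thm:class}), extreme amenability coming from Theorem~\ref{thm:main4} and Polishness from the closed-subgroup-of-$S_\infty$ observation. Your remark that non-isomorphism is meant in the topological-group category is also the reading the paper intends.
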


The proofs of Theorems~\ref{thm:main2} and \ref{thm:main5} follow the outline of the proof of a similar theorem in \cite{EGLMM}.

Our study of the isomorphism types of these extremely amenable groups goes much beyond counting the number of them. Another major effort we put in this paper is to study the isomorphism relation among these extremely amenable groups from the point of view of descriptive set theory. (For an overview of the descriptive set theory of equivalence relations, see Section~\ref{sec:2}). In view of Theorems~\ref{thm:main2} and \ref{thm:main5}, this is done by studying the complexity of the equivalence relation between countable distance value sets in the Borel reducibility hierarchy. We obtain the following penetrating results.

\begin{theorem}\label{thm:main8}{\ }
\begin{enumerate}
\item[\rm (1)] The equivalence between countable distance value sets $\Delta$ with $\inf\Delta>0$ and $\sup\Delta<\infty$ is Borel bireducible with the universal $S_\infty$-orbit equivalence relation.
\item[\rm (2)] The equivalence between countable distance value sets $\Delta$ with $\inf\Delta>0$ and $\sup\Delta=\infty$ is Borel bireducible with the equivalence relation $=^+$.
\item[\rm (3)] The equivalence between countable distance value sets $\Delta$ with $\inf\Delta=0$ is strictly in between $=^+$ and $=^{++}$ in the Borel reducibility hierarchy.
\end{enumerate}
\end{theorem}

These results significantly strengthen Theorem~\ref{thm:main3} and Corollary~\ref{thm:main6}. Theorem~\ref{thm:main8} (3) is motivated by and related to the recent work of Calderoni--Marker--Motto Ros--Shani \cite{CMMRS}. 

Finally, inspired by a research program initiated by Kechris--Nies--Tent \cite{KNT}, we show that the class of extremely amenable closed subgroups of $S_\infty$ is a Borel class (this was also independently proved by Iwanow--Majcher-Iwanow (unpublished); see \cite[Section 3]{Nies}), which implies that the isomorphism relation among these groups is an analytic equivalence relation. Regarding its complexity in the Borel reducibility hierarchy, our results yield the following.

\begin{theorem}\label{thm:main7}  The isomorphism relation among extremely amenable Polish groups that are isomorphic to closed subgroups of $S_\infty$ is more complex than the universal $S_\infty$-orbit equivalence relation. In particular, it is analytic and non-Borel.
\end{theorem}

The rest of the paper is organized as follows. In Section~\ref{sec:2} we define the basic concepts and notions and give a review of the results we need to cite in this paper. In particular, we review the descriptive set theory of equivalence relations. In Section~\ref{sec:3} we prove Theorems~\ref{thm:main1} and \ref{thm:main4}. In Section~\ref{sec:5} we prove Theorems~\ref{thm:main2} and \ref{thm:main5}. In Section~\ref{sec:CDVS} we prove Theorem~\ref{thm:main8}. Finally, in Section~\ref{sec:6} we prove that the class of extremely amenable closed subgroups of $S_\infty$ is Borel, derive Theorem~\ref{thm:main7}, and state some questions left open by this paper.

{\it Acknowledgment.} We thank Filippo Calderoni, Wei Dai, Aleksander Iwanow, Andre Nies, Marcin Sabok, Assaf Shani, Simon Thomas, and V\'{\i}ctor Hugo Ya\~{n}ez for their inspiring comments on an earlier version of the paper.

\section{Preliminaries\label{sec:2}}
In this section we review some of the basic concepts and notation we will use in the rest of the paper.

\subsection{Topological dynamics} Our standard reference for topological dynamics is \cite{AusBook}.

Our topological spaces and topological groups are all Hausdorff. 
Let $G$ be a topological group and let $X$ be a compact Hausdorff space. Let $a\colon G\times X\to X$ be a continuous action of $G$ on $X$. For $g\in G$ and $x\in X$, we also write $g\cdot x$ for $a(g,x)$. We say that $X$ is a {\em $G$-flow}. For $x\in X$, the {\em orbit} of $x$, denoted $[x]_G$, is the set $\{g\cdot x\colon g\in G\}$. We say that the $G$-flow $X$ is {\em minimal} if for every $x\in X$, $[x]_G$ is dense in $X$. 

Suppose $G$ is a topological group and $X$, $Y$ are two $G$-flows. A continuous map $\varphi\colon X\to Y$ is {\em equivariant} or a {\em homomorphism} if for all $g\in G$ and $x\in X$, $\varphi(g\cdot x)=g\cdot \varphi(x)$.  An {\em isomorphism} or a {\em topological conjugacy} from $X$ to $Y$ is an equivariant homeomorphism from $X$ onto $Y$. As usual, $X$ and $Y$ are {\em isomorphic} or {\em topologically conjugate} if there is an isomorphism from $X$ to $Y$.

A general result of topological dynamics states that for every topological group $G$, there is a {\em universal} minimal $G$-flow, i.e., a minimal $G$-flow which admits continuous equivariant maps onto all other minimal $G$-flows. Moreover, the universal minimal $G$-flow for a topological group $G$ is uniquely determined up to topological conjugacy. 

A topological group $G$ is {\em extremely amenable} if every $G$-flow $X$ has a fixed point, i.e., there is $x\in X$ such that $g\cdot x=x$ for all $g\in G$. Equivalently, a topological group $G$ is extremely amenable if and only if the universal minimal $G$-flow is a singleton. By a theorem of Veech \cite{Veech}, no non-trivial locally compact topological group can be extremely amenable. Thus extreme amenability is a largeness property for topological groups.

\subsection{Fra\"iss\'e Theory} Our standard reference for model theory and Fra\"iss\'e theory is \cite{Hodges}.

A {\em language} $\mathcal{L}$ is a set consisting of the following:
\begin{enumerate}
\item[(a)] a collection $(R_i)_{i\in I}$ of relation symbols; for each $i \in I$, there is an associated arity $n(i)$, which is a positive integer, such that $R_i$ is an $n(i)$-ary relation symbol;
\item[(b)] a collection $(F_j)_{j\in J}$ of function symbols; for each $j \in J$, there is an associated arity $m(j)$, which is a natural number, such that $F_j$ is an $m(j)$-ary function symbol.
\end{enumerate}
The $0$-ary function symbols, if any, are also called constant symbols. We say $\mathcal{L}$ is {\em countable} if $I$ and $J$ are countable.

If $\mathcal{L}=((R_i)_{i\in I}, (F_j)_{j\in J})$ is a language, an {\em $\mathcal{L}$-structure} is a tuple
$$\mathcal{A}= (A; (R_i^{\mathcal{A}})_{i\in I}, (F_j^{\mathcal{A}})_{j \in J} )$$ where 
\begin{enumerate}
\item[(i)] $A$ is a set known as the {\em universe} of $\mathcal{A}$ and denoted as $|\mathcal{A}|$;
\item[(ii)] for each $i\in I$, $R_i^{\mathcal{A}}$ is the {\em interpretation} of $R_i$ in $\mathcal{A}$, which is an $n(i)$-ary relation on $A$, i.e., $R_i^{\mathcal{A}}\subseteq A^{n(i)}$;
\item[(iii)] for each $j\in J$, $F_j^{\mathcal{A}}$ is the {\em interpretation} of $F_j$ in $\mathcal{A}$, which is an $m(j)$-ary function from $A^{m(j)}$ to $A$, i.e., $F_j^{\mathcal{A}}\colon A^{m(j)}\to A$.
\end{enumerate}
If $m(j)=0$, $F_j^{\mathcal{A}}\in A$ is a constant. $\mathcal{A}$ is {\em finite} or {\em countable} if $|\mathcal{A}|$ is finite or countable, respectively.

Let $\mathcal{L}$ be a language and let $\mathcal{A}$ and $\mathcal{B}$ be $\mathcal{L}$-structures. By an {\em embedding} from $\mathcal{A}$ to $\mathcal{B}$, we mean an injection $f\colon |\mathcal{A}| \rightarrow |\mathcal{B}|$ such that for all $i \in I$ and $a_1,\dots,a_{n(i)} \in A$,
$$ R_i^{\mathcal{A}}(a_1,\dots,a_{n(i)}) \iff  R_i^{\mathcal{B}}(f(a_1),\dots,f(a_{n(i)})),$$
 and for all $j\in J$ and $a_1,\dots,a_{m(j)} \in A$,
$$ f(F_j^{\mathcal{A}}(a_1,\dots,a_{m(j)}))=F_j^{\mathcal{B}}(f(a_1),\dots,f(a_{m(j)})).$$ 
If $f$ is the inclusion map, then we say $\mathcal{A}$ is a {\em substructure} of $\mathcal{B}$, denoted by $\mathcal{A}\subseteq \mathcal{B}$. If $S\subseteq |\mathcal{B}|$, then the substructure of $\mathcal{B}$ {\em generated by} $S$ is the smallest substructure of $\mathcal{B}$ whose universe contains $S$ as a subset. In this case $S$ is called a {\em generating set} for the substructure generated by $S$. A substructure is {\em finitely generated} if it has a finite generating set.  

If there is an embedding from $\mathcal{A}$ to $\mathcal{B}$, we say $\mathcal{A}$ is {\em embeddable} to $\mathcal{B}$, denoted by $\mathcal{A} \leq \mathcal{B}$. Finally, if $f$ is bijective, we say it is an {\em isomorphism}; if there is an isomorphism between $\mathcal{A}$ and $\mathcal{B}$, we say they are {\em isomorphic}, and denote $\mathcal{A} \cong \mathcal{B}$. 

    For the rest of the discussion, fix a countable language $\mathcal{L}$. A class $\mathcal{K}$ of finite $\mathcal{L}$-structures is said to be a {\em Fra\"iss\'e class} if it satisfies the following conditions: 
    \begin{enumerate}
        \item Up to isomorphism, there are only countably many different structures in $\mathcal{K}$. 
        \item There are structures of arbitarily large finite cardinality in $\mathcal{K}$. 
        \item Hereditary property (HP): If $\mathcal{A} \in \mathcal{K}$, then every finite substructure of $\mathcal{A}$ is in $\mathcal{K}$. 
        \item Joint embedding property (JEP): If $\mathcal{A}, \mathcal{B} \in \mathcal{K}$, then there is a $\mathcal{C} \in \mathcal{K}$ such that $\mathcal{A} \leq \mathcal{C}$ and $\mathcal{B} \leq \mathcal{C}$. 
        \item Amalgamation property (AP): For any embeddings $f: \mathcal{A} \rightarrow \mathcal{B}$ and $g:\mathcal{A} \rightarrow \mathcal{C}$, there is a $\mathcal{D} \in \mathcal{K}$ and embeddings $r:\mathcal{B} \rightarrow \mathcal{D}$ and $s: \mathcal{C} \rightarrow \mathcal{D}$ such that $r\circ f=s\circ g$.   
    \end{enumerate}

A countable $\mathcal{L}$-structure $\mathcal{A}$ is {\em locally finite} if every finitely generated substructure of $\mathcal{A}$ is finite. The {\em age} of $\mathcal{A}$, denoted by ${\rm Age}(\mathcal{A})$, is the class of finite substructures of $\mathcal{A}$. We say $\mathcal{A}$ is {\em ultrahomogeneous} if every isomorphism between two finite substructures can be extended to an automorphism of $\mathcal{A}$. We call $\mathcal{A}$ a {\em Fra\"iss\'e structure} if it is locally finite, countably infinite and ultrahomogeneous. 

Fra\"iss\'e \cite{Fraisse} (c.f. \cite{Hodges}) gave a correspondence between Fra\"iss\'e classes and Fra\"iss\'e structures, as follows. If $\mathcal{K}$ is a Fra\"iss\'e class, then there is a unique Fra\"iss\'e structure $\mathcal{A}$, up to isomorphism, such that ${\rm Age}(\mathcal{A}) =\mathcal{K}$. This unique structure is called the {\em Fra\"iss\'e limit} of $\mathcal{K}$, denoted by $\mbox{Flim}\,{\mathcal{K}}$. Conversely, if $\mathcal{A}$ is a Fra\"iss\'e structure, then ${\rm Age}(\mathcal{A})$ is a Fra\"iss\'e class. 

We remark here that the original setting of Fra\"iss\'e theory allows a more general definition of Fra\"iss\'e classes than we presented above. For example, in the above definition of Fra\"iss\'e classes, every mention of finite substructures can be replaced by finitely generated substructures and condition (2) can be omitted. Correspondingly, in the definition of Fra\"iss\'e structures, the structure does not need to be locally finite or infinite. We define these notions as above just because the KPT correspondence requires them.

The countable language $\mathcal{L}$ we will consider in this paper are all {\em relational}, i.e., they consists only of relation symbols and have no function symbols. In this case, every structure is locally finite, and the age of a structure consists of exactly all finitely generated substructures.

    We note the following equivalent condition for a countable structure to be ultrahomogeneous. 

    \begin{definition}[\cite{Hodges}]
        Let $\mathcal{L}$ be a countable language. A countable $\mathcal{L}$-structure $\mathcal{A}$ is {\em weakly homogeneous} if for all $\mathcal{X}$ and $\mathcal{Y}$ in ${\rm Age}(\mathcal{A})$ with $\mathcal{X} \subseteq \mathcal{Y}$ and embedding $f$ from $\mathcal{X}$ to $\mathcal{A}$, $f$ can be extended to an embedding $g$ from $\mathcal{Y}$ to $\mathcal{A}$. 
    \end{definition}

    It turns out that for countable structures, ultrahomogeneity and weak homogeneity are equivalent. This equivalent condition will be used later. 

\subsection{The KPT correspondence} What has become known as the KPT correspondence was developed in Kechris--Pestov--Todorcevic \cite{KPT}. We review the basic definitions and results here.

A {\em Polish space} is a separable, completely metrizable topological space. A {\em Polish group} is a topological group whose topology is Polish. An example of a Polish group is the infinite permutation group $S_\infty$. $S_\infty$ consists of all permutations of $\mathbb{N}$, where the group operation is composition. The topology on $S_\infty$ is the pointwise convergence topology, where the topology on $\mathbb{N}$ is discrete. Equivalently, $S_\infty$ has the subspace topology inherited from $\mathbb{N}^\mathbb{N}$, where $\mathbb{N}^\mathbb{N}$ has the product topology. Both the topologies on $\mathbb{N}^\mathbb{N}$ and on $S_\infty$ are Polish.

Throughout the rest of this subsection we fix a countable language $\mathcal{L}$. For a countable $\mathcal{L}$-structure $\mathcal{A}$, we use ${\rm Aut}(\mathcal{A})$ to denote the group of all automorphisms of $\mathcal{A}$, where the group operation is composition. Considering the discrete topology on $|\mathcal{A}|$, and equipping $\mbox{\rm Aut}(\mathcal{A})$ with the topology of pointwise convergence, $\mbox{\rm Aut}(\mathcal{A})$ becomes a Polish group. In fact, it can be regarded as a closed subgroup of the permutation group $S_{|\mathcal{A}|}$. When $|\mathcal{A}|$ is infinite, $S_{|\mathcal{A}|}$ is isomorphic to $S_{\infty}$. Conversely, if $G$ is a closed subgroup of $S_{\infty}$, then there is a countable structure $\mathcal{A}$ such that $G$ and $\mbox{\rm Aut}(\mathcal{A})$ are isomorphic as topological groups.

A Fra\"iss\'e class $\mathcal{K}$ is said to be {\em ordered} if there is a binary relation symbol $<$ in $\mathcal{L}$ and, in every structure in $\mathcal{K}$, the interpretaton of $<$ is a linear order. The KPT correspondence unravels a deep connection between the property of a Fra\"iss\'e ordered class $\mathcal{K}$ and the extreme amenability of $\mbox{\rm Aut}(\mbox{Flim}\,\mathcal{K})$. This property is called the ({\em structural}) {\em Ramsey property}, which we define below.
    
Let $\mathcal{K}$ be a class of finite structures and let $\mathcal{A}, \mathcal{B} \in \mathcal{K}$. We use the following notation to denote the set of all substructures of $\mathcal{B}$ which are isomorphic to $\mathcal{A}$: 
    \begin{equation*}
        \begin{pmatrix}
            \mathcal{B} \\ 
            \mathcal{A}
        \end{pmatrix} = \{\mathcal{A'}\colon \mathcal{A'} \subseteq \mathcal{B}, \mathcal{A'} \cong \mathcal{A} \}. 
    \end{equation*}
    Then for $\mathcal{A} \leq \mathcal{B} \leq \mathcal{C}$ and $k\geq 2$, we use the {\em Erd\H{o}s-Rado arrow}, 
    \begin{equation*}
        \mathcal{C} \rightarrow (\mathcal{B})^{\mathcal{A}}_k
    \end{equation*}
    to mean that for any function $c: 
    \begin{pmatrix}
        \mathcal{C} \\ 
        \mathcal{A}
    \end{pmatrix} \rightarrow k$, there is a $\mathcal{B'} \in 
    \begin{pmatrix}
        \mathcal{C} \\
        \mathcal{B}
    \end{pmatrix}$ such that $c$ is constant on $
    \begin{pmatrix}
        \mathcal{B'} \\
        \mathcal{A}
    \end{pmatrix}.$
    Finally, we say $\mathcal{K}$ has the {\em Ramsey property} if for any $\mathcal{A} \leq \mathcal{B} \in \mathcal{K}$ and $k \geq 2$, there is a $\mathcal{C} \in \mathcal{K}$ such that $\mathcal{C} \rightarrow (\mathcal{B})^\mathcal{A}_k$. 

    \begin{theorem}[Kechris-Pestov-Todorcevic \cite{KPT}]\label{thm:KPT}
        Let $\mathcal{K}$ be a Fra\"iss\'e ordered class and let $\mathcal{A}$ be the Fra\"iss\'e limit of $\mathcal{K}$. Then the following are equivalent:
        \begin{enumerate}
            \item[\rm (1)] ${\rm Aut}(\mathcal{A})$ is extremely amenable. 
            \item[\rm (2)] $\mathcal{K}$ has the Ramsey property. 
        \end{enumerate}
    \end{theorem}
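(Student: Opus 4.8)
The plan is to prove the two implications by translating, in both directions, between the topological-dynamical condition on $G=\mathrm{Aut}(\mathcal{A})$ and the combinatorial Ramsey condition on $\mathcal{K}$, using a dictionary between $G$ and the finite substructures of its Fra\"iss\'e limit $\mathcal{A}$. The first ingredient is a reformulation of extreme amenability suited to closed subgroups of $S_\infty$. I would prove that such a $G$ is extremely amenable if and only if for every open subgroup $V\le G$, every finite $F\subseteq G/V$, every $k\ge 2$, and every coloring $c\colon G/V\to k$, there exist $g\in G$ and $i<k$ with $c$ constantly equal to $i$ on $g\cdot F$ (here $G$ acts on the discrete coset space $G/V$ by left translation). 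For the forward direction, apply the fixed-point property to the orbit closure $\overline{G\cdot c}$ inside the compact shift-flow $k^{G/V}$: any fixed point of that subflow is a constant coloring, and membership of a constant coloring in $\overline{G\cdot c}$ means precisely that $c$ agrees with a constant on $g\cdot F$ for a suitable $g$ and each prescribed finite $F$. The converse is the direction that needs compactness: from the finite criterion one builds, via an ultrafilter limit on the greatest ambit, a fixed point in an arbitrary $G$-flow. The crucial simplification over general topological groups is that $V$ open makes the cosets clopen, so monochromaticity here is exact rather than approximate.

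The second ingredient is the dictionary. Since $\mathcal{A}$ is the Fra\"iss\'e limit, the pointwise stabilizers $G_{\mathcal{C}_0}$ of finite substructures $\mathcal{C}_0\subseteq\mathcal{A}$ form a neighborhood base of open subgroups at the identity, and $G/G_{\mathcal{C}_0}$ is canonically identified with the set of embeddings of $\mathcal{C}_0$ into $\mathcal{A}$ via $g\,G_{\mathcal{C}_0}\mapsto g{\restriction}\mathcal{C}_0$, where surjectivity of this identification is exactly ultrahomogeneity. Here the \emph{ordered} hypothesis is decisive: every finite structure in $\mathcal{K}$ carries a linear order and is therefore rigid, so embeddings of $\mathcal{C}_0$ correspond bijectively to copies, i.e.\ to the elements of $\binom{\mathcal{A}}{\mathcal{C}_0}$. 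Thus a coloring $c\colon G/G_{\mathcal{C}_0}\to k$ is exactly a $k$-coloring of $\binom{\mathcal{A}}{\mathcal{C}_0}$, left translation by $g\in G$ is the action of an automorphism of $\mathcal{A}$ moving copies around, and a finite $F\subseteq G/G_{\mathcal{C}_0}$ determines the finite substructure $\mathcal{B}_0\subseteq\mathcal{A}$ spanned by the corresponding copies; finding $g$ with $g\cdot F$ monochromatic then amounts to finding a single copy of $\mathcal{B}_0$ in $\mathcal{A}$ all of whose sub-copies of $\mathcal{C}_0$ receive one color.

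For $(2)\Rightarrow(1)$ I would start from $V=G_{\mathcal{A}_0}$, a $k$-coloring $c$ of $\binom{\mathcal{A}}{\mathcal{A}_0}$, and a finite $F$ spanning $\mathcal{B}_0\subseteq\mathcal{A}$. The Ramsey property applied to $\mathcal{A}_0\le\mathcal{B}_0$ yields $\mathcal{C}_0\in\mathcal{K}$ with $\mathcal{C}_0\to(\mathcal{B}_0)^{\mathcal{A}_0}_k$; embedding $\mathcal{C}_0$ into $\mathcal{A}$ by weak homogeneity and restricting $c$ to the copies of $\mathcal{A}_0$ it contains, I locate inside $\mathcal{C}_0$, hence inside $\mathcal{A}$, a $c$-monochromatic copy $\mathcal{B}_0'$ of $\mathcal{B}_0$. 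Choosing $g\in G$ with $g\cdot\mathcal{B}_0=\mathcal{B}_0'$ (again by ultrahomogeneity) witnesses the combinatorial criterion, so $G$ is extremely amenable. For $(1)\Rightarrow(2)$ I reverse the translation: given $\mathcal{A}_0\le\mathcal{B}_0\in\mathcal{K}$ and $k\ge2$, any $k$-coloring of $\binom{\mathcal{A}}{\mathcal{A}_0}$ is a coloring of $G/G_{\mathcal{A}_0}$, and taking $F$ to realize one fixed copy of $\mathcal{B}_0$, the criterion supplies $g$ for which $g\cdot F$ is a monochromatic copy of $\mathcal{B}_0$ in $\mathcal{A}$. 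Hence every $k$-coloring of $\binom{\mathcal{A}}{\mathcal{A}_0}$ admits a monochromatic copy of $\mathcal{B}_0$.

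It remains to finitize: to deduce from this property of the infinite limit $\mathcal{A}$ that some finite $\mathcal{C}_0\in\mathcal{K}$ already satisfies $\mathcal{C}_0\to(\mathcal{B}_0)^{\mathcal{A}_0}_k$. I would argue by compactness, fixing an exhaustion $\mathcal{C}_0^{(0)}\subseteq\mathcal{C}_0^{(1)}\subseteq\cdots$ of $\mathcal{A}$ by finite substructures: if no $\mathcal{C}_0^{(n)}$ were good, then the bad colorings of the $\mathcal{C}_0^{(n)}$ would form a finitely branching tree with no dead ends, and a K\"onig's lemma argument would assemble a single $k$-coloring of $\binom{\mathcal{A}}{\mathcal{A}_0}$ with no monochromatic copy of $\mathcal{B}_0$, contradicting the previous paragraph; any good finite substructure lies in $\mathcal{K}$ by the hereditary property. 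The step I expect to be the main obstacle is the first one: establishing the finitary coloring reformulation of extreme amenability and checking that the coset dictionary transports it faithfully, together with the care required so that the rigidity of ordered structures genuinely removes the embedding-versus-copy discrepancy. Aligning the quantifiers over $F$ and $V$ with the structures $\mathcal{B}_0$ and $\mathcal{A}_0$ is where the bookkeeping will be most delicate.
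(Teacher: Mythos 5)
The paper does not prove this theorem at all: it is imported as a black box from Kechris--Pestov--Todorcevic \cite{KPT}, so there is no internal proof to compare against. What you have written is, in outline, a correct reconstruction of the original KPT argument: the finitary coloring criterion for extreme amenability of closed subgroups of $S_\infty$, the dictionary identifying $G/G_{\mathcal{C}_0}$ with embeddings of $\mathcal{C}_0$ into $\mathcal{A}$ (with rigidity of finite ordered structures closing the embedding-versus-copy gap), and the K\"onig's lemma finitization are exactly the ingredients of the proof in \cite{KPT}, and the steps you spell out (the orbit-closure argument in $k^{G/V}$, both translations between colorings of coset spaces and colorings of $\binom{\mathcal{A}}{\mathcal{A}_0}$, and the compactness extraction of a finite $\mathcal{C}_0$) all go through as you describe. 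One concrete warning about the step you rightly flag as the main obstacle, namely deducing extreme amenability from the coloring criterion: do not attempt to realize the greatest ambit coordinatewise as the spaces $\beta(G/V)$, nor to manufacture a $G$-invariant ultrafilter on a single coset space $G/V$. The translation action of $G$ on $\beta(G/V)$ is in general \emph{not} jointly continuous (already for $G={\rm Aut}(\mathbb{Q},<)$ acting on $\beta\mathbb{Q}$ one can exhibit an ultrafilter at which continuity fails), so these spaces are not $G$-flows; only their inverse limit over all open $V$ carries a continuous action. A safer route bypasses the ambit entirely: given a $G$-flow $X$, a point $x_0\in X$, an entourage $U$ and a finite $F\subseteq G$, joint continuity plus compactness show the orbit map $g\mapsto gx_0$ is uniformly continuous, so a finite partition of $X$ into $U$-small pieces induces, up to a $U$-error, a coloring of a coset space $G/V$; the criterion then yields a point $y\in X$ with $(fy,y)\in U$ for all $f\in F$, and since the sets of such $U$-approximate fixed points are closed and have the finite intersection property, compactness of $X$ produces a genuine fixed point.
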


The simplest example of a Fra\"iss\'e class with the Ramsey property is the class of all finite sets. In fact, the Ramsey property for this class is equivalent to the classical Ramsey theorem. Also, the class of all finite linear orders has the Ramsey property, which is again equivalent to the classical Ramsey theorem; since the Fra\"iss\'e limit of this class is the countable dense linear order without endpoints, it follows from the above theorem that ${\rm Aut}(\mathbb{Q}, <)$ is extremely amenable. (Historically, Pestov \cite{Pestov98} proved the extreme amenability of $\mbox{\rm Aut}(\mathbb{Q},<)$ first, which motivated the above theorem.) Many more examples of Fra\"iss\'e ordered classes with the Ramsey property are now known and the automorphism groups of their Fra\"iss\'e limits are all extremely amenable by the KPT correspondence. For an excellent survey of these results, see Nguyen Van Th\'e \cite{NVT}. 

We will also use the following strengthening of Theorem~\ref{thm:KPT}, which is Theorem~3 of Nguyen Van Th\'e \cite{NVT}.

\begin{theorem}[\cite{NVT}]\label{thm:KPTstrong} Let $\mathcal{L}$ be a countable language, let $\mathcal{K}$ be a Fra\"iss\'e class of finite $\mathcal{L}$-structures, and let $\mathcal{A}$ be the Fra\"iss\'e limit of $\mathcal{K}$. Then the following are equivalent:
\begin{enumerate}
\item[\rm (1)] $\mbox{\rm Aut}(\mathcal{A})$ is extremely amenable.
\item[\rm (2)] $\mathcal{K}$ has the Ramsey property and every element of $\mathcal{K}$ is rigid, i.e., has trivial automorphism group.
\end{enumerate}
\end{theorem}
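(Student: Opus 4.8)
The plan is to route both implications through the finite, combinatorial reformulation of extreme amenability, using the rigidity hypothesis as the bridge that identifies cosets of open subgroups with substructure-copies. Write $G=\mbox{\rm Aut}(\mathcal{A})$, a closed subgroup of $S_\infty$, and recall the basic open subgroups $V_{\mathcal{A}_0}=\{g\in G\colon g\restriction|\mathcal{A}_0|=\mbox{id}\}$, the pointwise stabilizers of finite substructures $\mathcal{A}_0\le\mathcal{A}$. A left coset $gV_{\mathcal{A}_0}$ is precisely the restriction $g\restriction|\mathcal{A}_0|$, i.e. an $\mathcal{L}$-embedding of $\mathcal{A}_0$ into $\mathcal{A}$, so $G/V_{\mathcal{A}_0}$ is canonically the set $\mbox{\rm Emb}(\mathcal{A}_0,\mathcal{A})$ of embeddings. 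The key observation is that when $\mathcal{A}_0$ is rigid an embedding is determined by its image, so $G/V_{\mathcal{A}_0}$ is canonically the set $\binom{\mathcal{A}}{\mathcal{A}_0}$ of copies of $\mathcal{A}_0$. This dictionary is exactly where rigidity enters, and it is what lets finite colorings of $G/V$ be read as colorings of copies.

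For $(1)\Rightarrow(2)$ I would treat rigidity and the Ramsey property separately. For rigidity, consider the compact $G$-flow $LO(\mathcal{A})\subseteq 2^{|\mathcal{A}|\times|\mathcal{A}|}$ of all linear orderings of the universe, with the natural action. Extreme amenability furnishes a fixed point $<^*\in LO(\mathcal{A})$, i.e. a $G$-invariant linear order. Given any finite $\mathcal{B}_0\le\mathcal{A}$ and $\sigma\in\mbox{\rm Aut}(\mathcal{B}_0)$, ultrahomogeneity extends $\sigma$ to some $g\in G$; since $g$ preserves $<^*$, $\sigma$ preserves the finite linear order $<^*\restriction|\mathcal{B}_0|$, forcing $\sigma=\mbox{id}$, so every element of $\mathcal{K}$ is rigid. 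For the Ramsey property I would avoid re-deriving the ambit machinery and instead reduce to the ordered case, Theorem~\ref{thm:KPT}. Set $\mathcal{A}^*=(\mathcal{A},<^*)$; since $<^*$ is $G$-invariant we have $\mbox{\rm Aut}(\mathcal{A}^*)=\mbox{\rm Aut}(\mathcal{A})$, and $\mathcal{A}^*$ is again ultrahomogeneous, so $\mathcal{K}^*={\rm Age}(\mathcal{A}^*)$ is a Fra\"iss\'e ordered class whose automorphism group is extremely amenable; Theorem~\ref{thm:KPT} then gives that $\mathcal{K}^*$ has the Ramsey property. Because $G$ is transitive on the copies of each finite type and preserves $<^*$, every $\mathcal{L}$-structure in $\mathcal{K}$ has a unique order-expansion occurring in $\mathcal{K}^*$; consequently $\binom{\mathcal{C}}{\mathcal{A}_0}$ and $\binom{\mathcal{C}^*}{\mathcal{A}_0^*}$ are canonically identified via the induced order, and the Ramsey property transfers verbatim from $\mathcal{K}^*$ back to $\mathcal{K}$.

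The direction $(2)\Rightarrow(1)$ is the genuine obstacle, since no invariant order is available a priori (producing one would be circular, as it presupposes the extreme amenability we are trying to establish). Here I would invoke the standard reformulation of extreme amenability through the greatest ambit, i.e. the Samuel compactification of $G$ with its right uniformity (see \cite{AusBook,KPT}): $G$ is extremely amenable iff for every open subgroup $V$, every finite $\mathcal{F}\subseteq G$, and every finite coloring $c\colon G/V\to k$, there is $g\in G$ with $c$ constant on $\{gfV\colon f\in\mathcal{F}\}$. Taking $V=V_{\mathcal{A}_0}$ and using the rigidity dictionary to read $G/V$ as $\binom{\mathcal{A}}{\mathcal{A}_0}$, the finite set $\mathcal{F}$ together with the target cosets $\{gfV\}$ can be arranged to correspond to the copies of $\mathcal{A}_0$ lying inside a single copy of a larger substructure $\mathcal{B}_0\supseteq\mathcal{A}_0$, so that ``$c$ constant on $\{gfV\}$'' becomes ``$\binom{\mathcal{B}_0'}{\mathcal{A}_0}$ monochromatic.'' By ultrahomogeneity and a compactness (König's lemma) argument, the existence of such a monochromatic copy of $\mathcal{B}_0$ inside $\mathcal{A}$ for every finite coloring is equivalent to the existence of a finite witness $\mathcal{C}_0\in\mathcal{K}$ with $\mathcal{C}_0\rightarrow(\mathcal{B}_0)^{\mathcal{A}_0}_k$, i.e. to the Ramsey property of $\mathcal{K}$. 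Granting the ambit reformulation, rigidity plus the Ramsey property thus yields the required fixed-point condition.

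The hard part, and the step I expect to carry all the weight, is the ambit reformulation itself: establishing that a fixed point in every $G$-flow is equivalent to the nonexistence of a finitely supported ``oscillating'' coloring of some $G/V$. This is a compactness/fixed-point argument on the Samuel compactification and is the core of the Kechris--Pestov--Todorcevic analysis; once it is in place, the remaining work is the bookkeeping of the coset--copy correspondence, which is precisely what the rigidity hypothesis legitimizes.
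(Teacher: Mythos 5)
The paper offers no proof of this statement for you to be compared against: it is quoted as Theorem~3 of the cited survey \cite{NVT} (in substance Theorem~4.7 of \cite{KPT}) and used as a black box. Judged on its own terms, your outline is correct, and it is essentially the standard argument of \cite{KPT}. In $(1)\Rightarrow(2)$: the fixed point of the logic action on the compact space of linear orderings of $|\mathcal{A}|$ gives an invariant order $<^*$; rigidity follows as you say; $\mathcal{A}^*=(\mathcal{A},<^*)$ is indeed ultrahomogeneous, because any isomorphism of finite substructures of $\mathcal{A}^*$ is in particular an $\mathcal{L}$-isomorphism, hence extends to some $g\in\mathrm{Aut}(\mathcal{A})$, which automatically preserves $<^*$; so Theorem~\ref{thm:KPT} applies to $\mathcal{K}^*=\mathrm{Age}(\mathcal{A}^*)$. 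Your transfer of the Ramsey property from $\mathcal{K}^*$ back to $\mathcal{K}$ is also sound: invariance of $<^*$ together with transitivity of $\mathrm{Aut}(\mathcal{A})$ on copies makes the ordered expansion of each member of $\mathcal{K}$ occurring in $\mathcal{A}^*$ unique up to isomorphism, which gives the canonical bijection between $\binom{\mathcal{C}}{\mathcal{A}_0}$ and $\binom{\mathcal{C}^*}{\mathcal{A}_0^*}$ that the transfer needs. In $(2)\Rightarrow(1)$ you correctly isolate the two load-bearing points: rigidity is exactly what identifies left cosets of the pointwise stabilizer $V_{\mathcal{A}_0}$ (which are always embeddings of $\mathcal{A}_0$ into $\mathcal{A}$) with copies of $\mathcal{A}_0$; and the whole direction rests on the oscillation-stability criterion for closed subgroups of $S_\infty$ (Theorem~4.5 of \cite{KPT}), which you invoke but do not prove. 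Since the statement itself is a cited theorem in the paper, stopping at that citation is a defensible endpoint; just be aware that what you have is a reduction to \cite{KPT}'s criterion plus Theorem~\ref{thm:KPT}, not a self-contained proof, exactly as you acknowledge.

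Two small cleanups. First, the K\"onig/compactness remark is unnecessary: only the trivial direction is used, namely that a finite Ramsey witness $\mathcal{C}\in\mathcal{K}=\mathrm{Age}(\mathcal{A})$ embeds into $\mathcal{A}$, so $\mathcal{A}$ itself realizes the arrow $\mathcal{A}\rightarrow(\mathcal{B}_0)^{\mathcal{A}_0}_k$; the converse (extracting a finite witness) is never needed for $(2)\Rightarrow(1)$. Second, you should record the reduction from an arbitrary open subgroup $V$ to a basic one: $V$ contains some $V_{\mathcal{A}_0}$, and a coloring of $G/V$ pulls back to a coloring of $G/V_{\mathcal{A}_0}$, so constancy on finitely many $V_{\mathcal{A}_0}$-cosets implies constancy on the corresponding $V$-cosets; without this remark the coset--copy dictionary only covers basic stabilizers.
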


\subsection{Descriptive set theory of equivalence relations\label{subsec:er}}
In this paper we will be discussing the complexity of equivalence relations in the framework of descriptive set theory. We review the basic concepts and notions in this subsection. For other undefined terms and concepts, see \cite{GaoBook}.

Let $X$ be a set and let $\mathcal{B}$ be a $\sigma$-algebra of subsets of $X$. $(X, \mathcal{B})$ is a {\em standard Borel space} if there is a Polish topology $\tau$ on $X$ such that $\mathcal{B}$ is the $\sigma$-algebra of Borel sets generated (via iterations of countable union, countable intersection, and complementation) by $\tau$. Any Polish space is a standard Borel space. If $X$ is a standard Borel space and $Y$ is a Borel subset of $X$, then $Y$ is a standard Borel space. A subset $A$ of a standard Borel space is {\em analytic} if there is a standard Borel space $Y$ and there is a Borel function $f\colon Y\to X$ with $A=f(Y)$. An equivalence relation $E$ on a standard Borel space $X$ is {\em Borel} (or {\em analytic}) if $E$ is a Borel (analytic) subset of $X^2$. 

For equivalence relations $E, F$ on standard Borel spaces $X, Y$, respectively, we say that $E$ is {\em Borel reducible} to $F$, denoted $E\leq_B F$, if there is a Borel function $f\colon X\to Y$ such that for all $x, x'\in X$, we have
$$ xEx'\iff f(x)Ff(x'). $$
We say that $E$ is {\em Borel bireducible} with $F$, denoted $E\sim_B F$, if both $E\leq_B F$ and $F\leq_B E$ hold. We say that $E$ is {\em strictly Borel reducible} to $F$, denoted $E<_B F$, if $E\leq_B F$ but $F\not\leq_B E$. The Borel reducibility preorder $\leq_B$ gives rise to a relative complexity hierarchy for equivalence relations on standard Borel spaces. Specifically, given an equivalence relation $E$, we attempt to determine its exact complexity by comparing it, in terms of Borel reducibility, with some benchmark equivalence relations previously studied  and understood in the theory. If there is a benchmark equivalence relation $F$ so that we can show $E\sim_B F$, then we consider the exact complexity of $E$ to be completely determined. Short of doing this, we can establish upper bounds $E\leq_B F$ or $E<_B F$ for $E$, or lower bounds $F\leq_B E$ or $F<_B E$ for $E$, as much as we can, which provide partial information on the complexity of $E$.

We will use the following benchmark equivalence relations in this paper. Discussions about the ones without references can be found in \cite{GaoBook}.

\begin{enumerate}
\item $=_{\mathbb{N}}$ is the equality relation on $\mathbb{N}$, which is a Polish space with the discrete topology. If $=_{\mathbb{N}}\ \leq_B E$, then $E$ contains infinitely many equivalence classes.
\item $=_{\mathbb{R}}$ is the equality relation on $2^{\mathbb{N}}$. If $=_{\mathbb{R}}\ \leq_B E$, then $E$ contains continuum many equivalence classes.
\item $E_0$ is the eventual agreement relation on $2^{\mathbb{N}}$ defined as
$$ (x_i)_iE_0(y_i)_i\iff \exists i\ \forall j\geq i\ x_j=y_j. $$
\item $E_\infty$ is the universal countable Borel equivalence relation. It can be defined as the following equivalence relation on $2^{\mathbb{F}_2}$, where $\mathbb{F}_2$ is the free group with two generators:
$$ (x_g)_gE_\infty (y_g)_g \iff \exists h\in \mathbb{F}_2\ \forall g\in \mathbb{F}_2\ x_g=y_{gh}. $$
\item $=^+$ is the equivalence relation on $(2^\mathbb{N})^{\mathbb{N}}$ defined as
$$ (x_i)_i\ =^+ (y_i)_i\iff \{x_i\colon i\in \mathbb{N}\}=\{y_i\colon i\in\mathbb{N}\}. $$
\item $=^{++}$ is the equivalence relation on $((2^{\mathbb{N}})^{\mathbb{N}})^{\mathbb{N}}$ defined as
$$ (x_i)_i\ =^{++} (y_i)_i\iff \forall i\ \exists j\ x_i=^+ y_j \mbox{ and } \forall j\ \exists i\ x_i=^+y_j. $$
\item Let $\mathcal{P}$ be the powerset operation on sets. Let $\mbox{\rm TC}$ denote the transitive closure operation on sets. For $k=0,1$, let $\mathcal{Q}^*_k(\mathbb{N})$
be the collection of pairs $(A, R)$ such that:
\begin{enumerate}
\item[a)] $A$ is a hereditarily countable set in $\mathcal{P}^3(\mathbb{N})$;
\item[b)] $R$ is a ternary relation on $A\times A \times (\mathcal{P}^k(\mathbb{N}) \cap \mbox{\rm TC}(A))$ such that
\begin{itemize}
\item for any $a, b \in A$ there is some $r$ such that $R(a, b, r)$ holds;
\item given any $a \in A$, for any $b, b'\in A$ and any $r$, if $R(a, b, r)$ and $R(a, b',r)$ both 
hold then $b=b'$.
\end{itemize}
\end{enumerate}
The equivalence relation $\cong^*_{3,k}$ is deﬁned as the isomorphism relation of countable structures coding pairs $(A, R)$ in $\mathcal{Q}^*_k(\mathbb{N})$. These equivalence relations were defined by Hjorth--Kechris--Louveau \cite{HKL}. Our presentation above follows \cite{CMMRS}.
\item $E^\infty_{S_\infty}$ is the universal equivalence relation among all $S_\infty$-orbit equivalence relations, i.e., for any Borel action of $S_\infty$ on a standard Borel space $X$, the orbit equivalence relation $E^X_{S_\infty}=\{(x,y)\in X\colon \exists g\in S_\infty\; g\cdot x=y\}$ 
is Borel reducible to $E^\infty_{S_\infty}$. This equivalence relation is known to be universal among all isomorphism relations of countable structures. That is, given any Borel class of countable structures, the isomorphism relation is Borel reducible to $E^\infty_{S_\infty}$. Moreover, $E^\infty_{S_\infty}$ is also known to be Borel bireducible with many isomorphism relations of countable structures. For example, $E^\infty_{S_\infty}$ is Borel bireducible with the isomorphism relation for all countable linear orders. 
\end{enumerate}

The following Borel reducibility results are well known (see \cite{GaoBook}):
$$ =_{\mathbb{N}}\ <_B\  =_{\mathbb{R}}\ <_B   E_0\, <_B E_\infty  <_B\  =^+\, <_B\  =^{++}\ <_B E^\infty_{S_\infty}. $$
It was also shown in Hjorth--Kechris--Louveau \cite{HKL} that
$$ =^+\,<_B\  \cong^*_{3,0}\ <_B\  \cong^*_{3,1}\ <_B\  =^{++}. $$

The equivalence relations in (1) through (7) above are Borel equivalence relations, and $E^\infty_{S_\infty}$ is analytic and non-Borel. 

Kechris--Nies--Tent \cite{KNT} initiated the following research program.

 \begin{programme}{\ }
        \begin{enumerate}
            \item For natural classes of closed subgroups of $S_\infty$, determine if they are Borel as subsets of $\mathcal{F}(S_\infty)$. 
            \item If a class is Borel, study the relative complexity of the topological isomorphism relation, using Borel reducibility $\leq_B$. 
        \end{enumerate}
     \end{programme}

Here $\mathcal{F}(S_\infty)$ is the standard Borel space consisting of all closed subsets of $S_\infty$. The $\sigma$-algebra of Borel sets on $\mathcal{F}(S_\infty)$ is generated by sets of the form 
$$\{F\in \mathcal{F}(S_\infty)\colon F \cap U \neq \emptyset\}, $$ where $U\subseteq S_\infty$ is a basic open set, i.e., of the form 
$$N_{\bar{a},\bar{b}}=\{g\in S_\infty\colon g(\bar{a})=\bar{b}\} $$
for some tuples $\bar{a}, \bar{b}$ of natural numbers of the same length. 

In this paper we will consider the class of closed subgroups of $S_\infty$ which are extremely amenable. We will show that it is a Borel subset of $\mathcal{F}(S_\infty)$ (Theorem~\ref{thm:EA}), thus this class fits well in the KNT research program.

\section{The Extreme Amenability of Automorphism Groups\label{sec:3}}

\subsection{$\Delta$-metric spaces}

 We work with the following notion. 

    \begin{definition}[\cite{EGLMM}]
        Let $\Delta$ be a nonempty set of positive real numbers. We call $\Delta$ a {\em distance value set} if for all $x, y\in \Delta$, 
        \begin{equation*}
           \min\{ x+y, \sup\Delta\} \in \Delta.
        \end{equation*}
    \end{definition}
Briefly speaking, a distance value set is exactly an initial segment of a subsemigroup of positive reals. Given a distance value set $\Delta$, a metric space $(X,d)$ is a {\em $\Delta$-metric space} if $d(x,y)\in\Delta$ for all $x\neq y\in X$. 

Fix a countable distance value set $\Delta$. We can view each countable $\Delta$-metric space as a countable structure, as follows. Let $\mathcal{L}_\Delta=\{R_s\colon s \in \Delta\}$, where each $R_s$ is a binary relation symbol. Given any countable $\Delta$-metric space $(X,d)$,  set $R^X_s(x,y)$ if and only if $d(x,y)=s$. Thus we turn $(X,d)$ into a countable $\mathcal{L}_\Delta$-structure $(X; (R^X_s)_{s\in \Delta})$. Let $\mbox{\rm Aut}(X,d)$, or simply $\mbox{\rm Aut}(X)$ if $d$ is understood, denote the automorphism group $\mbox{\rm Aut}(X; (R^X_s)_{s\in \Delta})$. Then $\mbox{\rm Aut}(X)$ is a Polish group. 

However, in this section we will consider another topology, and thus we will use a different notation. Let $\mbox{\rm Iso}(X,d)$ or simply $\mbox{\rm Iso}(X)$ if $d$ is understood, denote the group of all autoisometries of $X$, i.e., $f\in \mbox{\rm Iso}(X)$ if $f\colon X\to X$ satisfies $d(f(x), f(y))=d(x,y)$ for all $x,y\in X$. Equip $\mbox{\rm Iso}(X)$ with the topology of pointwise convergence in metric, i.e., the topology given by basic open sets of the form
$$ \{f\in \mbox{\rm Iso}(X)\colon d(f(x_1),y_1)<\epsilon_1, \dots, d(f(x_n),y_n)<\epsilon_n\} $$
where $x_1,\dots, x_n, y_1,\dots, y_n\in X$ and $\epsilon_1,\dots, \epsilon_n$ are positive real numbers. Then $\mbox{\rm Iso}(X)$ is a separable metrizable group, and it is Polish when $d$ is a complete metric on $X$. Let $(\bar{X},\bar{d})$ be the completion of $(X,d)$. Then $\mbox{\rm Iso}(X)$ is isomorphic to a subgroup of $\mbox{\rm Iso}(\bar{X})$, and $\mbox{\rm Iso}(\bar{X})$ is Polish. 

Importantly, note that $\mbox{\rm Aut}(X)$ and $\mbox{\rm Iso}(X)$ are identical as sets, and the identity map is a group isomorphism.

Fix a countable distance value set $\Delta$. Let $\mathcal{K}_\Delta$ be the class of all finite $\Delta$-metric spaces. Then $\mathcal{K}_\Delta$ is a Fra\"iss\'e class. We denote the Fra\"iss\'e limit of $\mathcal{K}_\Delta$ by $\mathbb{U}_{\Delta}$ and call it the {\em $\Delta$-Urysohn space}. In the case that $\Delta = \mathbb{Q}$, the resulting Fra\"iss\'e limit $\mathbb{U}_{\mathbb{Q}}$ is the well-known {\em rational Urysohn space} and has been denoted by $\mathbb{QU}$ in the literature.  The completion of $\mathbb{QU}$ is the {\em universal Urysohn space} and has been denoted by $\mathbb{U}$ in the literature. $\mathbb{U}$ is universal for all separable complete metric spaces, i.e., for any separable complete metric space $(X,d)$, there is an isometric embedding from $X$ into $\mathbb{U}$.

\subsection{The extreme amenability of $\mbox{\rm Aut}(\mathbb{U}_\Delta^<)$} 
        
    To apply the KPT correspondence, we need to consider a Fra\"iss\'e ordered class. Let $\Delta$ be a countable distance value set. Let $\mathcal{K}_\Delta^<$ be the class of all finite ordered $\Delta$-metric spaces, i.e., each element of $\mathcal{K}_\Delta^<$ is a structure $(X; (R_s^X)_{s\in \Delta}, <^X)$, where $<^X$ is a linear order on $X$. %Since we can associate any ordering on finite metric space, by a similar construction to the above lemma, 
It is easy to see that $\mathcal{K}_\Delta^<$ is a Fra\"iss\'e (ordered) class. We denote its Fra\"iss\'e limit by $\mathbb{U}_\Delta^<$. It can be shown that $\mathbb{U}_\Delta^< \cong (\mathbb{U}_\Delta, <^{\mathbb{U}_\Delta})$ where $<^{\mathbb{U}_\Delta}$ is a linear order isomorphic to $(\mathbb{Q}, <)$ (see, e.g., Proposition 5.2 of \cite{KPT}). 

The following is a theorem of Ne\v{s}et\v{r}il \cite{Nes07}; Ngyen Van Th\'e \cite{NVTMAMS} stated the result more explicitly (as Theorem 13 of \cite{NVTMAMS}) and gave a full proof. 
    
\begin{theorem}[Ne\v{s}et\v{r}il] \label{thm:eaK<} Let $\Delta$ be a countable distance value set. Then $\mathcal{K}_\Delta^<$ has the Ramsey property.
\end{theorem}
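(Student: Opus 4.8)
The plan is to prove the Ramsey property for $\mathcal{K}_\Delta^<$, the class of finite ordered $\Delta$-metric spaces, by reducing it to a known Ramsey theorem for a suitable class of \emph{relational} structures. The key observation is that a finite $\Delta$-metric space can be encoded as a finite structure in the relational language $\mathcal{L}_\Delta = \{R_s : s \in \Delta\}$ subject to the metric constraints (symmetry, the triangle inequality compatible with $\Delta$, and the requirement that each pair realizes exactly one distance). These constraints are finitary and \emph{forbidden-configuration} in nature: a structure fails to be a $\Delta$-metric space precisely when it embeds one of a fixed family of ``bad'' triangles (or a pair with two distinct distances). The strategy is therefore to realize $\mathcal{K}_\Delta$ as the class of structures omitting a prescribed set of forbidden finite substructures, and then invoke Ne\v{s}et\v{r}il's general theorem on the Ramsey property for such classes in the ordered expansion.

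Concretely, I would first set up the relational encoding and verify that the amalgamation class $\mathcal{K}_\Delta$ is defined by a set of forbidden irreducible substructures: the relevant obstruction is that $\Delta$-metricity can be checked on triples, so the class is determined by forbidding all three-point configurations whose distances $(a,b,c)$ violate $\min\{a+b,\sup\Delta\} \geq c$ (up to permutation), together with forbidding any pair carrying two distinct $R_s$-relations. Because these forbidden configurations are finite and, crucially, the distance structure is controlled by triangles, the free/constrained amalgamation available here is exactly of the type handled by Ne\v{s}et\v{r}il's framework. I would then cite Ne\v{s}et\v{r}il's theorem \cite{Nes07} (in the explicit form of Theorem~13 of \cite{NVTMAMS}): the class of finite ordered structures in a finite relational language that is closed under substructures and has the amalgamation property, with amalgamation controlled by forbidden irreducible substructures, has the Ramsey property. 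Applying this to $\mathcal{K}_\Delta^<$ yields the conclusion directly.

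The main obstacle I anticipate is not the Ramsey combinatorics itself---that is delegated to Ne\v{s}et\v{r}il's theorem---but rather \emph{verifying the hypotheses} of that theorem in the precise form needed. Specifically, one must check that $\mathcal{K}_\Delta$ is a class whose amalgamation is governed by the forbidden substructures in the sense required (an ``amalgamation class with finite forbidden configurations'' or a class admitting a suitable canonical/free amalgamation over these constraints), so that the ordered expansion falls under Ne\v{s}et\v{r}il's hypothesis. The subtlety is that the triangle inequality with the cap $\min\{a+b,\sup\Delta\}$ is not a purely ``free'' constraint: when amalgamating, new cross-distances must be \emph{assigned}, not merely left undefined, and one must confirm that the canonical assignment (shortest-path metric capped at $\sup\Delta$, as in the commented-out amalgamation lemma) produces a member of $\mathcal{K}_\Delta$ and is compatible with the forbidden-configuration description. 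Handling the case $\sup\Delta \in \Delta$ versus $\sup\Delta \notin \Delta$ (finite versus infinite diameter) may require separate bookkeeping.

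An alternative, more self-contained route---if one prefers not to unpack the general theorem---would be to prove the Ramsey property directly by induction on the number of distinct distance values in $\Delta$ appearing in the structures, using a ``partite construction'' or ``amalgamation of colorings'' argument in the style of Ne\v{s}et\v{r}il--R\"odl. One would partition the coloring problem for $\mathcal{K}_\Delta^<$ by fixing the largest distance value and treating the finer structure recursively, at each stage absorbing a single new distance class via an iterated amalgamation. However, since the statement is explicitly attributed to Ne\v{s}et\v{r}il and restated with a full proof in \cite{NVTMAMS}, the intended proof is almost certainly the citation-based reduction described above, and I would present it as such rather than reproducing the partite construction.
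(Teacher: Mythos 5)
Your endpoint agrees with the paper, but it is worth being clear that the paper offers no proof of this statement at all: Theorem~\ref{thm:eaK<} is a pure citation. The cited result, Ne\v{s}et\v{r}il's theorem \cite{Nes07} in the explicit form of Theorem 13 of \cite{NVTMAMS}, is stated directly for classes of finite ordered metric spaces whose distances lie in a set closed under truncated sums --- which is precisely what a countable distance value set is. So $\mathcal{K}_\Delta^<$ is \emph{literally} the class covered by the cited theorem; no relational re-encoding, no forbidden-configuration description, and no verification of amalgamation hypotheses is required. Your final paragraph gestures at exactly this, and to that extent your proposal matches the paper.

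Where your proposal goes astray is the intermediate reduction. There is no general Ne\v{s}et\v{r}il--R\"odl-type theorem on ordered classes defined by forbidden irreducible substructures that yields the metric-space case as a corollary. In the encoding by relations $R_s$, being a $\Delta$-metric space requires that \emph{every} pair of points carry exactly one relation; the ``pair with no relation'' that must be excluded is not an irreducible structure, so $\mathcal{K}_\Delta$ is not of the form $\mathrm{Forb}(\mathcal{F})$ for a family $\mathcal{F}$ of irreducible structures, and --- as you yourself observe --- its amalgamation is not free, since cross-distances must be assigned rather than omitted. This is exactly why Ne\v{s}et\v{r}il's 2007 result was a new theorem requiring its own partite construction, not an instance of the earlier forbidden-configuration machinery. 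The obstacle you flag as ``verifying the hypotheses'' is therefore not a bookkeeping issue but fatal to that route. The repair is simply to delete the reduction and cite the metric-space Ramsey theorem in the form whose hypothesis (distances from a set closed under truncated sums) coincides with the definition of a distance value set, which is what the paper does.
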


By the KPT correspondence, we get that $\mbox{\rm Aut}(\mathbb{U}_\Delta^<)$ is extremely amenable, which is Theorem~\ref{thm:main4}.

\begin{comment}
The proof of Theorem~\ref{thm:eaK<} follows closely the original proof of the following theorem.

    \begin{theorem}[Ne{\v{s}}et{\v{r}}il \cite{Nes07}]\label{thm:Nes}
        The class of all finite ordered metric spaces has the Ramsey property.
    \end{theorem}   

 In particular, his proof can actually work in the class of all finite ordered $\Delta$-metric spaces for any distance value set $\Delta$. Thus by the KPT correspondence, for any countable distance value set $\Delta$, ${\rm Aut}(\mathbb{U}_\Delta^<)$ is extemely amenable. 

    As introduced before, by ${\rm Aut}(\mathbb{U}_\Delta)$, we mean the automorphism group of $\mathbb{U}_\Delta$. This is exactly the isometry group of $\mathbb{U}_\Delta$ but we view this metric space as a countable structure and so the topology on the isometry group is the pointwise convergence topology respect to the discrete topology on $\mathbb{U}_\Delta$. 
    The motivation of this paper is the following theorem: 
\end{comment}

\subsection{The extreme amenability of $\mbox{\rm Iso}(\mathbb{U}_\Delta)$} In this subsection we prove Theorem~\ref{thm:main1}. We first consider a countable distance value set $\Delta$ with $\inf\Delta=0$. Note that this condition is equivalent to that $\Delta$ is dense in $(0,\sup \Delta)$. In this case we show that $\mbox{\rm Iso}(\mathbb{U}_\Delta)$ is extremely amenable. 
Our proof follows the outline of the proof of Theorem~6.17 of \cite{KPT}, which argues the extreme amenability of ${\rm Iso}(\mathbb{U})$ from the extreme amenability of ${\rm Aut}(\mathbb{U}_\mathbb{Q}^<)$. We use the following basic fact (Lemma~6.18 of \cite{KPT}).

    \begin{lemma}[\cite{KPT}]\label{lem:denseimage}
        Let $G,H$ be topological groups and let $f$ be a continuous group homomorphism from $G$ into $H$ with dense image. If $G$ is extremely amenable, so is $H$. 
    \end{lemma}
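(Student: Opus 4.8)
The plan is to show that an arbitrary $H$-flow has a fixed point by transporting the action back along $f$ to a $G$-flow, and then exploiting the density of $f(G)$. So let $X$ be a compact Hausdorff space carrying a continuous $H$-action; I want to produce $x\in X$ with $h\cdot x=x$ for all $h\in H$.

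First I would define a $G$-action on the same space $X$ by setting $g\ast x = f(g)\cdot x$ for $g\in G$ and $x\in X$. This is a continuous action: it is the composite of $f\times\mathrm{id}_X\colon G\times X\to H\times X$ with the given continuous action map $H\times X\to X$, and it respects the group operation precisely because $f$ is a homomorphism. Thus $X$ becomes a $G$-flow. Since $G$ is extremely amenable, this $G$-flow has a fixed point $x_0\in X$, which by the definition of $\ast$ means $f(g)\cdot x_0=x_0$ for every $g\in G$; that is, $x_0$ is fixed by every element of the image $f(G)\subseteq H$.

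It remains to promote this to a genuine $H$-fixed point, and this is the only step requiring any care. Consider the stabilizer $\mathrm{Stab}(x_0)=\{h\in H\colon h\cdot x_0=x_0\}$. The evaluation map $H\to X$, $h\mapsto h\cdot x_0$, is continuous (it is the restriction of the action map to $H\times\{x_0\}$), and since $X$ is Hausdorff the singleton $\{x_0\}$ is closed; hence $\mathrm{Stab}(x_0)$ is the preimage of a closed set and is therefore a closed subgroup of $H$. By the previous paragraph $\mathrm{Stab}(x_0)\supseteq f(G)$, and $f(G)$ is dense in $H$ by hypothesis, so the closed set $\mathrm{Stab}(x_0)$ must equal all of $H$. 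Thus $h\cdot x_0=x_0$ for every $h\in H$, so $x_0$ is a fixed point of the original $H$-flow. As $X$ was an arbitrary $H$-flow, $H$ is extremely amenable.

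The main (and essentially only) obstacle is the argument of the last paragraph: one must know that the set of group elements fixing a given point is closed, which is exactly where the standing Hausdorff hypothesis on flows is used. Everything else reduces to the routine verification that pulling back a continuous action along a continuous homomorphism again yields a continuous action.
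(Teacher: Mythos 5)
Your proof is correct and follows essentially the same route as the paper's (which is the standard argument from \cite{KPT}): pull the $H$-flow back along $f$ to a $G$-flow, use extreme amenability of $G$ to obtain a fixed point, and use density of $f(G)$ to conclude the point is $H$-fixed. Your additional observation that the stabilizer is closed (via Hausdorffness of $X$) is exactly the detail the paper leaves implicit, so there is nothing to add.
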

\begin{comment}
    \begin{proof}
        Let $H$ act continuously on a compact space $X$. Note that via the homomorphism $f$, $G$ acts continuously on $X$, too. By the extreme amenability, there is a fixed point for the action of $G$. But since the image of $f$ is dense in $H$, this fixed point is also fixed by $H$. 
    \end{proof}
\end{comment}

    Note that every automorphism of $\mathbb{U}_\Delta^<$ is actually an isomertry of $\mathbb{U}_\Delta$. So we consider the identity map from ${\rm Aut}(\mathbb{U}_\Delta^<)$ to ${\rm Iso}(\mathbb{U}_\Delta)$. This map is obviously continuous. The following proposition implies that $\mbox{\rm Aut}(\mathbb{U}^<_\Delta)$ is dense in $\mbox{\rm Iso}(\mathbb{U}_\Delta)$. %Here we denote the linear order on $\mathbb{U}^<_\Delta$ by $<$ for notational simplicity.%We show it has dense image.

    \begin{proposition}\label{Density} Suppose $\Delta$ is a countable distance value set with $\inf\Delta=0$.
        Given $x_1,\dots,x_n,y_1,\dots,y_n \in \mathbb{U}_{\Delta}$ such that $x_i \mapsto y_i$, $1\leq i\leq n$, is a partial isometry, and given $\epsilon>0$, there exist $y_1',\dots,y_n' \in \mathbb{U}_{\Delta}$ so that $x_i \mapsto y_i'$, $1\leq i\leq n$, is an order-preserving (with respect to $<^{\mathbb{U}_\Delta}$) partial isometry, and for all $1\leq i\leq n$, $d(y_i, y_i') < \epsilon$.
    \end{proposition}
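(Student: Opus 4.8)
The plan is to produce the perturbed points $y_1',\dots,y_n'$ as the image of an auxiliary configuration under an embedding supplied by the weak homogeneity of $\mathbb{U}_\Delta^<$. Concretely, I would build a single finite ordered $\Delta$-metric space $W$ that contains, as an ordered substructure, an isometric order-isomorphic copy of $\{y_1,\dots,y_n\}$ (call these points $a_1,\dots,a_n$), and alongside it a second family $b_1,\dots,b_n$ that is isometric to the $x$-configuration, ordered like the $x_i$'s, and with each $b_i$ placed at a tiny distance from $a_i$. Once $W$ is shown to be a genuine ordered $\Delta$-metric space, weak homogeneity extends the embedding $a_i \mapsto y_i$ of the copy of $\{y_i\}$ to an embedding $g\colon W \to \mathbb{U}_\Delta^<$, and I would set $y_i' := g(b_i)$.

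The heart of the argument is the construction of $W$. Write $c_{ij} := d(x_i,x_j) = d(y_i,y_j)$ for $i\neq j$; since the $x_i$ are distinct and there are finitely many pairs, $m := \min_{i\neq j} c_{ij} > 0$. Using $\inf\Delta = 0$, I would fix $\delta \in \Delta$ with $0 < \delta < \min\{\epsilon, m\}$. On the $2n$ points $\{a_i, b_i : 1\le i\le n\}$ define $D$ by declaring $D(a_i,a_j) = D(b_i,b_j) = D(a_i,b_j) = c_{ij}$ whenever $i\neq j$, and $D(a_i,b_i) = \delta$. All assigned values lie in $\Delta$, so the only thing to verify is that $D$ is a metric. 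This reduces to the triangle inequality for triples, which I would organize by index pattern: a triple with three distinct indices inherits the inequality from the original $x$-configuration, while a triple with index pattern $i,i,j$ reduces to $\delta \le 2c_{ij}$ and $c_{ij} \le \delta + c_{ij}$, both guaranteed by $\delta < m$. This case analysis is the main obstacle, though it is entirely routine; the essential point is that density of $\Delta$ at $0$ is exactly what makes a small enough $\delta\in\Delta$ available, and without $\inf\Delta = 0$ the construction (and indeed the proposition) breaks down.

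It remains to equip $W$ with a linear order and to read off the conclusion. Since $\{a_i\}$ and $\{b_i\}$ are disjoint, I can choose any linear order $\prec$ on $W$ whose restriction to $\{a_i\}$ agrees with $<^{\mathbb{U}_\Delta}$ on the $y_i$ and whose restriction to $\{b_i\}$ agrees with the $<^{\mathbb{U}_\Delta}$-order of the $x_i$; such an order plainly exists. Then $W \in \mathcal{K}_\Delta^<$, and the map $a_i \mapsto y_i$ is an embedding of the ordered substructure $\{a_1,\dots,a_n\}$ into $\mathbb{U}_\Delta^<$. Applying weak homogeneity of $\mathbb{U}_\Delta^<$ (which holds since it is the Fra\"iss\'e limit of $\mathcal{K}_\Delta^<$), extend this to $g\colon W \to \mathbb{U}_\Delta^<$ and put $y_i' = g(b_i)$. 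Because $g$ preserves distances and order, we get $d(y_i',y_j') = c_{ij} = d(x_i,x_j)$ (so $x_i\mapsto y_i'$ is a partial isometry), $d(y_i,y_i') = \delta < \epsilon$, and $y_i' <^{\mathbb{U}_\Delta} y_j' \iff b_i \prec b_j \iff x_i <^{\mathbb{U}_\Delta} x_j$ (so the map is order-preserving), as required.
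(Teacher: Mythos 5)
Your proof is correct and takes essentially the same approach as the paper: both construct a finite ordered $\Delta$-metric space consisting of the $y$-configuration together with a nearby second copy ordered like the $x$'s, and then realize it inside $\mathbb{U}_\Delta^<$ by (weak) homogeneity of the Fra\"iss\'e limit. The only difference is cosmetic: the paper defines the cross-distances as $\delta + d(y_i,y_j)$, which makes the triangle inequality automatic for any $\delta<\epsilon$, whereas you reuse the original distances $c_{ij}$ off the diagonal and put $\delta$ only on matched pairs, at the cost of the extra (harmless) requirement $\delta < \min_{i\neq j} c_{ij}$.
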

    \begin{proof}
    Let $Z=\{y_1,\dots,y_n,z_1,\dots,z_n\}$ be a finite ordered $\Delta$-metric space with the following properties:
    \begin{enumerate}	
    \item[(i)] $d_Z(y_i,y_j)=d_Z(z_i,z_j)=d_{\mathbb{U}_\Delta}(y_i,y_j)$ for $1\leq i<j\leq n$,
    \item[(ii)] $d_Z(y_i,z_j)=\delta + d_{\mathbb{U}_\Delta}(y_i,y_j)$ for $1\leq i\leq j\leq n$,
    \item[(iii)] $y_i<^Zy_j$ if and only if $y_i<y_j$ for $1\leq i<j\leq n$,
    \item[(iv)] $y_i<^Zz_j$ for $1\leq i, j\leq n$,
    \item[(v)] $z_i<^Zz_j$ if and only if $x_i<x_j$ for $1\leq i<j\leq n$,
    \end{enumerate}
    where $\delta \in \Delta$ and $\delta<\epsilon$. Note that $(Z,d_Z)$ is a metric space. Thus, by the universality and the ultrahomogeneity of $\mathbb{U}_\Delta^< \cong (\mathbb{U}_\Delta, <^{\mathbb{U}_\Delta})$, there are $\{y_i'\}_{i=1}^n$ in $\mathbb{U}_\Delta$ such that $y_i\mapsto y_i$ and $z_i\mapsto y_i'$ is an order-preserving isometry from $Z$ into $\mathbb{U}_\Delta^<$. These $y_i'$ for $1\leq i\leq n$ are as desired.
    \end{proof}

We are now ready to prove Theorem~\ref{thm:main1}, which we restate below.

    \begin{theorem}\label{thm:main1res}
        Let $\Delta$ be a countable distance value set. Then: 
        \begin{enumerate}
            \item ${\rm Iso}(\mathbb{U}_\Delta)$ is extremely amenable if and only if $\inf\Delta=0$. 
            \item ${\rm Iso}(\mathbb{U}_\Delta)$ is Polish if and only if $\inf\Delta>0$. 
        \item If $\inf\Delta=0$, then ${\rm Iso}(\mathbb{U}_\Delta)$ is isomorphic to a dense subgroup of either ${\rm Iso}(\mathbb{U})$ or $\mbox{\rm Iso}(\mathbb{U}_1)$. 
\end{enumerate}
    \end{theorem}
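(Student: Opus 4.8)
The plan is to organize all three items around the single dichotomy $\inf\Delta=0$ versus $\inf\Delta>0$, since this is precisely the condition that governs whether the metric-pointwise topology on $\mbox{\rm Iso}(\mathbb{U}_\Delta)$ collapses onto the discrete-pointwise topology of $\mbox{\rm Aut}(\mathbb{U}_\Delta)$. For the forward direction of (1) I would simply assemble the machinery already in place: every element of $\mbox{\rm Aut}(\mathbb{U}_\Delta^<)$ is an isometry of $\mathbb{U}_\Delta$, so the identity map $\mbox{\rm Aut}(\mathbb{U}_\Delta^<)\to\mbox{\rm Iso}(\mathbb{U}_\Delta)$ is a continuous homomorphism; Proposition~\ref{Density}, together with the ultrahomogeneity of $\mathbb{U}_\Delta^<$, shows its image is dense, and $\mbox{\rm Aut}(\mathbb{U}_\Delta^<)$ is extremely amenable by Theorem~\ref{thm:main4}. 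Lemma~\ref{lem:denseimage} then gives that $\mbox{\rm Iso}(\mathbb{U}_\Delta)$ is extremely amenable.

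For the reverse direction of (1) and the forward direction of (2) I would first record the key observation that when $\inf\Delta=\delta_0>0$ any two distinct points of $\mathbb{U}_\Delta$ lie at distance $\ge\delta_0$. Consequently every Cauchy sequence is eventually constant, so $\mathbb{U}_\Delta$ is already complete, and for $\epsilon<\delta_0$ the basic metric-pointwise neighborhoods coincide with the basic discrete-pointwise neighborhoods. Hence $\mbox{\rm Iso}(\mathbb{U}_\Delta)=\mbox{\rm Aut}(\mathbb{U}_\Delta)$ as topological groups; this is a closed subgroup of $S_\infty$ and therefore Polish, establishing (2) in this direction. It fails to be extremely amenable because $\mathcal{K}_\Delta$ contains the non-rigid two-point space $\{a,b\}$ with $d(a,b)=s$ for any $s\in\Delta$; since (1) implies (2) in Theorem~\ref{thm:KPTstrong} and rigidity of every element is thereby violated, $\mbox{\rm Aut}(\mathbb{U}_\Delta)$ is not extremely amenable.

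For (3), assume $\inf\Delta=0$, so $\Delta$ is dense in $(0,\sup\Delta)$. Each isometry of $\mathbb{U}_\Delta$ extends uniquely to the completion $\bar{\mathbb{U}}_\Delta$, and because isometries are equicontinuous, restriction to the dense set $\mathbb{U}_\Delta$ shows that $f\mapsto\bar f$ is a topological-group embedding $\mbox{\rm Iso}(\mathbb{U}_\Delta)\hookrightarrow\mbox{\rm Iso}(\bar{\mathbb{U}}_\Delta)$. I would identify $\bar{\mathbb{U}}_\Delta$ with $\mathbb{U}$ when $\sup\Delta=+\infty$ and with $\mathbb{U}_1$ (after rescaling by $1/\sup\Delta$) when $\sup\Delta<+\infty$, using the standard fact that the completion of a countable, universal, ultrahomogeneous $\Delta$-metric space with $\Delta$ dense has the one-point extension property for all real-valued distances and is therefore the Urysohn space of the corresponding diameter. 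The substance is density of the image. Given $g\in\mbox{\rm Iso}(\bar{\mathbb{U}}_\Delta)$ and a basic neighborhood, which by equicontinuity may be taken with test points $x_1,\dots,x_n\in\mathbb{U}_\Delta$ and tolerance $\epsilon$, I would pick $y_i\in\mathbb{U}_\Delta$ within $\epsilon/2$ of $g(x_i)$, so that $x_i\mapsto y_i$ is an approximate partial isometry, perturb the targets to an exact $\Delta$-valued partial isometry $x_i\mapsto y_i'$ with $d(y_i,y_i')$ small, and extend by ultrahomogeneity of $\mathbb{U}_\Delta$ to $f\in\mbox{\rm Iso}(\mathbb{U}_\Delta)$ with $\bar f$ in the given neighborhood. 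The main obstacle is exactly this approximate-to-exact perturbation: I expect to carry it out by building an auxiliary finite $\Delta$-metric space $Z\supseteq\{y_1,\dots,y_n\}$ containing a copy of $\{x_1,\dots,x_n\}$ placed at small $\Delta$-distances from the $y_i$, which is possible precisely because $\Delta$ is dense in $(0,\sup\Delta)$, and then embedding $Z$ into $\mathbb{U}_\Delta$ over $\{y_i\}$.

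Finally, for the reverse direction of (2) I would feed (3) into a Baire-category argument. Write the image as a dense subgroup $D\cong\mbox{\rm Iso}(\mathbb{U}_\Delta)$ of the Polish group $G=\mbox{\rm Iso}(\mathbb{U})$ or $\mbox{\rm Iso}(\mathbb{U}_1)$. This $D$ is the setwise stabilizer of the countable set $\mathbb{U}_\Delta$, hence proper: by transitivity of $\mbox{\rm Iso}(\mathbb{U})$ there is an isometry carrying a point of $\mathbb{U}_\Delta$ to a point of the uncountable remainder. A proper dense subgroup of a Polish group cannot be Polish, since if $D$ were Polish in the subspace topology it would be $G_\delta$ in $G$, and then $D$ and a coset $gD$ with $g\notin D$ would be disjoint dense $G_\delta$ sets, contradicting the Baire category theorem. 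Thus $\mbox{\rm Iso}(\mathbb{U}_\Delta)$ is not Polish when $\inf\Delta=0$, which completes (2).
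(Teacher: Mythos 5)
Your proposal is correct and follows essentially the same route as the paper: (1) via the continuous dense image of the extremely amenable group $\mbox{\rm Aut}(\mathbb{U}^<_\Delta)$ (Proposition~\ref{Density} plus Lemma~\ref{lem:denseimage}) in one direction and the rigidity clause of Theorem~\ref{thm:KPTstrong} in the other, (2) via the identification of $\mbox{\rm Iso}(\mathbb{U}_\Delta)$ with the Polish group $\mbox{\rm Aut}(\mathbb{U}_\Delta)$ when $\inf\Delta>0$ and a dense-proper-subgroup argument when $\inf\Delta=0$ (your $G_\delta$/Baire-category reasoning is just a proof of the paper's appeal to the fact that a Polish subgroup of a Polish group is closed), and (3) via density of $\mbox{\rm Iso}(\mathbb{U}_\Delta)$ in the isometry group of its completion, identified with $\mathbb{U}$ or $\mathbb{U}_1$. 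The only divergence is that where the paper cites Cameron--Vershik and Vershik for this density and for the identification of the completion, you sketch the approximate-to-exact perturbation argument yourself, and your sketch is indeed the argument that those citations generalize.
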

  
\begin{proof} For (1) first assume that $\inf\Delta=0$. Then from Proposition~\ref{Density}, we conclude that $\mbox{\rm Aut}(\mathbb{U}_\Delta^<)$ is dense in $\mbox{\rm Iso}(\mathbb{U}_\Delta)$. Now by Theorems~\ref{thm:eaK<} and \ref{thm:KPT}, $\mbox{\rm Aut}(\mathbb{U}^<_\Delta)$ is extremely amenable. Thus $\mbox{\rm Iso}(\mathbb{U}_\Delta)$ is extremely amenable by Lemma~\ref{lem:denseimage}.

On the other hand, assume $\inf\Delta>0$. Then $\mathbb{U}_\Delta$ has discrete topology, and it follows that $\mbox{\rm Iso}(\mathbb{U}_\Delta)$ and $\mbox{\rm Aut}(\mathbb{U}_\Delta)$ are isomorphic as topological groups. It is clear that not every finite $\Delta$-metric space is rigid. By Theorem~\ref{thm:KPTstrong}, $\mbox{\rm Aut}(\mathbb{U}_\Delta)$ is not extremely amenable. Hence $\mbox{\rm Iso}(\mathbb{U}_\Delta)$ is not extremely amenable.

Next we prove (3). Recall that Vershik \cite{Vershik} (Theorem 3) has shown that if $\Delta$ is dense in $(0,+\infty)$, then $\mathbb{U}_\Delta$ is dense in $\mathbb{U}$, hence the metric completion of $\mathbb{U}_\Delta$ is $\mathbb{U}$. Similarly, if $\Delta$ is bounded and dense in $(0,\sup\Delta)$, then the metric completion of $\mathbb{U}_\Delta$ is just $\mathbb{U}_a$ where $a=\sup\Delta$ and $\mathbb{U}_a$ is a sphere of $\mathbb{U}$ with diameter $a$. Thus, given any isometry $f\in \mbox{\rm Iso}(\mathbb{U}_\Delta)$, there is a unique isometry $\hat{f}\in \mbox{\rm Iso}(\mathbb{U})$ (or $\mbox{\rm Iso}(\mathbb{U}_a)$ if $a=\sup\Delta<+\infty$) extending $f$. Therefore we can view $\mbox{\rm Iso}(\mathbb{U}_\Delta)$ as the subgroup of $\mbox{\rm Iso}(\mathbb{U})$ (or of $\mbox{\rm Iso}(\mathbb{U}_a)$) consisting of exactly those isometries  fixing $\mathbb{U}_\Delta$ setwise. 
It was proved in \cite{CV} that if $\Delta=\mathbb{Q}$, then $\mbox{\rm Iso}(\mathbb{U}_\Delta)$ is dense in $\mbox{\rm Iso}(\mathbb{U})$. The argument in \cite{CV} can be generalized to prove that if $\Delta$ is dense in $(0,+\infty)$ then ${\rm Iso}(\mathbb{U}_\Delta)$ is a dense subgroup of ${\rm Iso}(\mathbb{U})$. The same argument also shows that when $\inf\Delta=0$ and $\Delta$ is bounded, $\mbox{\rm Iso}(\mathbb{U}_\Delta)$ is a dense subgroup of $\mbox{\rm Iso}(\mathbb{U}_a)$, where $a=\sup\Delta$. However, for any $0<a<\infty$, $\mbox{\rm Iso}(\mathbb{U}_a)$ and $\mbox{\rm Iso}(\mathbb{U}_1)$ are isomorphic. Hence in this case $\mbox{\rm Iso}(\mathbb{U}_\Delta)$ is isomorphic to a dense subgroup of $\mbox{\rm Iso}(\mathbb{U}_1)$.
          
(2) If $\inf\Delta>0$, then ${\rm Iso}(\mathbb{U}_\Delta)$ and $\mbox{\rm Aut}(\mathbb{U}_\Delta)$ are isomorphic as topological groups, while the latter is Polish. Hence $\mbox{\rm Iso}(\mathbb{U}_\Delta)$ is Polish. On the other hand, suppose $\inf\Delta=0$. First suppose $\Delta$ is unbounded, i.e., $\Delta$ is dense in $(0,+\infty)$. Then by the above argument ${\rm Iso}(\mathbb{U}_\Delta)$ is dense in ${\rm Iso}(\mathbb{U})$. Note that $\mbox{\rm Iso}(\mathbb{U})$ is Polish. So if ${\rm Iso}(\mathbb{U}_\Delta)$ was Polish, it would be a closed subgroup of $\mbox{\rm Iso}(\mathbb{U})$ and thus $\mbox{\rm Iso}(\mathbb{U}_\Delta)=\mbox{\rm Iso}(\mathbb{U})$. This is impossible since we can easily find an isomertry of $\mathbb{U}$ not setwise fixing $\mathbb{U}_\Delta$.  Next suppose $\Delta$ is bounded and let $a=\sup\Delta$. Then by the above argument $\mbox{\rm Iso}(\mathbb{U}_\Delta)$ is dense in $\mbox{\rm Iso}(\mathbb{U}_a)$. If $\mbox{\rm Iso}(\mathbb{U}_\Delta)$ was Polish, we again have  $\mbox{\rm Iso}(\mathbb{U}_\Delta)=\mbox{\rm Iso}(\mathbb{U}_a)$, which yields a similar contradiction.
\end{proof}

By Theorem~\ref{thm:main1res} (1) and (2), for any countable distance value set $\Delta$, the separable metrizable group ${\rm Iso}(\mathbb{U}_\Delta)$ can never be both Polish and extremely amenable. When $\mbox{\rm Iso}(\mathbb{U}_\Delta)$ is not Polish, one typically takes the metric completion of $\mathbb{U}_\Delta$ and considers its isometry group, which is Polish. Clause (3) of Theorem~\ref{thm:main1res} says that this obtains essentially only two Polish groups, $\mbox{\rm Iso}(\mathbb{U})$ and $\mbox{\rm Iso}(\mathbb{U}_1)$, both known to be extremely amenable by Pestov \cite{Pestov02}.

We remark here that Theorem~\ref{thm:main1res} (1) follows from a less well-known result of Gould. In \cite[Theorem A.2.2 (4)]{Gould} he showed that a topological group is extremely amenable if and only if all of its dense subgroups are extremely amenable. By this result, $\mbox{\rm Iso}(\mathbb{U}_\Delta)$ is extremely amenable when $\inf \Delta=0$ following the density observations and results of Pestov mentioned in the preceding paragraph. The authors thank Ya\~{n}ez for pointing out the reference \cite{Gould} to us.

In addition, Dai--Gao--Ya\~{n}ez \cite{DGY} recently proved that for countable distance value sets $\Delta$ with $\inf\Delta=0$, $\mbox{\rm Iso}(\mathbb{U}_\Delta)$ has the so-called strong L\'{e}vy property. This also implies that they are extremely amenable by a result of Gromov--Milman \cite{GM}.

\section{Isomorphism Types of Extremely Amenable Groups\label{sec:5}}

In this section we characterize the topological isomorphism relation of groups of the form $\mbox{\rm Iso}(\mathbb{U}_\Delta)$ or $\mbox{\rm Aut}(\mathbb{U}^<_{\Delta})$ by a notion of equivalence between countable distance value sets. This notion is defined as follows.

\begin{definition}[\cite{EGLMM}]\label{def:CDVSequiv} Let $\Delta$ and $\Lambda$ be distance value sets. 
\begin{enumerate}
\item[(i)] A triple $(x,y,z)$ of positive real numbers is a {\em $\Delta$-triangle} if $x,y, z\in \Delta$ and $|x-y|\leq z\leq x+y$.
\item[(ii)] We say that $\Delta$ and $\Lambda$ are {\em equivalent} if there is a bijection $f\colon \Delta\to \Lambda$ such that for any positive real numbers $x,y, z$, $(x,y,z)$ is a $\Delta$-triangle if and only if $(f(x), f(y), f(z))$ is a $\Lambda$-triangle.
\end{enumerate}
\end{definition}

\subsection{Isomorphism types of $\mbox{\rm Iso}(\mathbb{U}_\Delta)$} 

In this subsection we characterize the topological isomorphism relation of groups of the form $\mbox{\rm Iso}(\mathbb{U}_\Delta)$ for countable distance value sets $\Delta$ with $\inf\Delta=0$. We use the following theorem from \cite{EGLMM}.

 \begin{theorem}[\cite{EGLMM}]\label{thm:char}
        Let $\Delta$ and $\Lambda$ be countable distance value sets. Then the following are equivalent: 
        \begin{enumerate}
            %\item There is a group homomorphism from ${\rm Aut}(\mathbb{U}_\Delta)$ to ${\rm Aut}(\mathbb{U}_\Lambda)$ with dense image. 
            \item[\rm (i)] ${\rm Aut}(\mathbb{U}_\Delta)$ and ${\rm Aut}(\mathbb{U}_\Lambda)$ are isomorphic as abstract groups.  
            \item[\rm (ii)] ${\rm Aut}(\mathbb{U}_\Delta)$ and ${\rm Aut}(\mathbb{U}_\Lambda)$ are isomorphic as topological groups. 
            \item[\rm (iii)] $\Delta$ and $\Lambda$ are equivalent. 
        \end{enumerate}
    \end{theorem}

Our result is similar. The following is a restatement of Theorem~\ref{thm:main2}.

 \begin{theorem}\label{thm:remain2}
        Let $\Delta$ and $\Lambda$ be countable distance value sets. Then the following are equivalent: 
        \begin{enumerate}
            %\item There is a group homomorphism from ${\rm Aut}(\mathbb{U}_\Delta)$ to ${\rm Aut}(\mathbb{U}_\Lambda)$ with dense image. 
            \item[\rm (i)] ${\rm Iso}(\mathbb{U}_\Delta)$ and ${\rm Iso}(\mathbb{U}_\Lambda)$ are isomorphic as abstract groups.  
            \item[\rm (ii)] ${\rm Iso}(\mathbb{U}_\Delta)$ and ${\rm Iso}(\mathbb{U}_\Lambda)$ are isomorphic as topological groups. 
            \item[\rm (iii)] $\Delta$ and $\Lambda$ are equivalent. 
        \end{enumerate}
    \end{theorem}

\begin{proof} To see that (i) implies (iii), the key oberservation is that $\mbox{\rm Iso}(\mathbb{U}_\Delta)$ and $\mbox{\rm Aut}(\mathbb{U}_\Delta)$ are identical abstract groups for any $\Delta$. Thus this implication follows from the implication of (iii) from (i) in Theorem~\ref{thm:char}. 

The implication of (i) from (ii) is obvious. 

To see that (iii) implies (ii), fix a bijection $f$ from $\Delta$ to $\Lambda$ which witnesses the equivalence of the two distance value sets. Let $d$ be the metric on $\mathbb{U}_\Delta$. Define a new metric $d_f$ on $\mathbb{U}_\Delta$ by setting $d_f(x,y)=f(d(x,y))$. Then $(\mathbb{U}_\Delta, d_f)$ is isometric to $\mathbb{U}_\Lambda$, since $(\mathbb{U}_\Delta, d_f)$ is a Fra\"iss\'e structure and its age is exactly the class of all finite $\Lambda$-metric spaces. Now the identity map from $(\mathbb{U}_\Delta,d)$ to $(\mathbb{U}_\Delta, d_f)$ induces a topological isomorphism from $\mbox{\rm Iso}(\mathbb{U}_\Delta)$ to $\mbox{\rm Iso}(\mathbb{U}_\Lambda)$. 
\end{proof}

\subsection{Isomorphism types of $\mbox{\rm Aut}(\mathbb{U}^<_\Delta)$} 
In this subsection we deal with extremely amenable Polish groups of the form $\mbox{\rm Aut}(\mathbb{U}^<_\Delta)$ for countable distance value sets $\Delta$. Our main theorem of this subsection is the following.

    \begin{theorem}\label{thm:class}
        Let $\Delta$ and $\Lambda$ be countable distance value sets. Then the following are equivalent: 
        \begin{enumerate}
            \item[\rm (i)] ${\rm Aut}(\mathbb{U}_\Delta^<)$ and ${\rm Aut}(\mathbb{U}_\Lambda^<)$ are isomorphic as topological groups. 
            \item[\rm (ii)] $\Delta$ and $\Lambda$ are equivalent. 
        \end{enumerate}
    \end{theorem}

\begin{comment} 
Granting the theorem, we immediately get the following corollary from Corollary~\ref{cor:cont}.

     \begin{corollary}\label{thm:mainaut}
        There are continuum many pairwise non-isomorphic extremely amenable Polish groups which are automorphism groups of countable structures, or equivalently closed subgroups of $S_\infty$. 
     \end{corollary}
\end{comment}

The rest of this subsection is devoted to a proof of Theorem~\ref{thm:class}. The outline of the proof follows the proof of Theorem~\ref{thm:char} in \cite{EGLMM}.

The argument for the implication of (i) from (ii) is similar to the proof of Theorem~\ref{thm:remain2}. 

To prove the other direction, we need some results due to Slutsky \cite{Slutsky}. We work with the following concept from \cite{Slutsky} in the Fra\"iss\'e theory of finite metric spaces. 

\begin{definition} Let $\mathcal{A}, \mathcal{B}, \mathcal{C}$ be finite metric spaces.  For simplicity assume $\mathcal{A}$ is a substructure of both $\mathcal{B}$ and $\mathcal{C}$. Also assume $|\mathcal{B}|\cap |\mathcal{C}|=|\mathcal{A}|$. Define a finite metric space $\mathcal{D}$ as follows. Let $|\mathcal{D}|$ = $|\mathcal{B}| \cup |\mathcal{C}|$. For $x,y\in |\mathcal{D}|$, define $d_{\mathcal{D}}(x,y)$ as follows. If $x,y \in |\mathcal{B}|$, then let $d_{\mathcal{D}}(x,y)=d_{\mathcal{B}}(x,y)$. Similarly, if $x,y\in \mathcal{C}$, then let $d_{\mathcal{D}}(x,y)=d_{\mathcal{C}}(x,y)$. If $x\in |\mathcal{B}|$ and $y\in |\mathcal{C}|$, then let
    \begin{equation*}
        d_{\mathcal{D}}(x,y)= \left \{
            \begin{array}{ll}
                \min\{d_\mathcal{B}(x,z)+d_\mathcal{C}(z,y)\colon  z\in |\mathcal{A}|\}, & \mbox{ if $|\mathcal{A}| \neq \varnothing$,} \\
                \mbox{diam}(\mathcal{B})+\mbox{diam}(\mathcal{C}),  & \mbox{ if $|\mathcal{A}| = \varnothing$.}
            \end{array}\right .
    \end{equation*}
Then $\mathcal{D}$ is called the {\em free amalgam} of $\mathcal{B}$ and $\mathcal{C}$ over $\mathcal{A}$.
\end{definition}

Since (ordered) metric spaces can be regarded as relational structures, for an (ordered) metric space $\mathcal{B}$, a substructure $\mathcal{A}$ of $\mathcal{B}$ generated by a set $A\subseteq |\mathcal{B}|$ satisfies $|\mathcal{A}|=A$. Thus we will identify $\mathcal{A}$ with $A$, and speak of $A$ as a substructure. This does not cause confusion, and will simplify our notation.

    \begin{definition}
        Let $A$ be a metric space and $p$ a partial isometry on $A$. A point $x \in {\rm dom}(p)$ is called {\em periodic} if there is a natural number $n>0$ such that \[x,p(x),\dots,p^n(x)\in {\rm dom}(p) \ {\rm and} \ p^n(x)=x.\] The set of all periodic points of $p$ is denoted by $Z(p)$. A point $x \in {\rm dom}(p)$ is called a {\em fixed point} if $p(x)=x$. The set of all fixed points of $p$ is denoted by $F(p)$. 
    \end{definition}

    \begin{lemma}[\cite{Slutsky}]\label{lemma:Extensions}
        Let $D$ be a finite ordered metric space. Let $p_1$ and $p_2$ be order-preserving partial isometries of $D$ such that $Z(p_1)=F(p_1)$ and $Z(p_2)=F(p_2)$. Suppose $F(p_1) \cap F(p_2) \neq \emptyset$. Then there exist a finite ordered metric space $D'$ extending $D$, order-preserving  partial isometries $q_1$ and $q_2$ of $D'$ extending $p_1,p_2$ respectively, and an element $w \in \mathsf{F}(s,t)$, the free group generated by ${s,t}$, such that 
        \begin{enumerate}
           \item $Z(q_1)=Z(p_1)$ and $Z(q_2)=Z(p_2)$;
           \item $D \subseteq {\rm dom}(q_1)$;
           \item ${\rm dom}(q_1) \cup w(q_1,q_2)(D)$ is the free amalgam of ${\rm dom}(q_1)$ and \linebreak $w(q_1,q_2)(D)$ over $F(p_1) \cap F(p_2)$. %%, i.e., ${\rm dom}(\bar{p}) \cap w(\bar{p},\bar{q})(A) =F(p) \cap F(q)$ and $\forall x \in {\rm dom}(\bar{p}), y\in w(\bar{p},\bar{q})(A), d(x,y)={\rm min}\{d(x,z)+d(z,y): z \in F(p) \cap F(q)\}.$ 
        \end{enumerate}
        Moreover, the distances in $D'$ are taken from the additive semigroup generated by the distances in $D$. 
     \end{lemma}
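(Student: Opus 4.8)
The plan is to build $D'$ by repeatedly adjoining fresh orbit points, using the hypothesis $Z(p_i)=F(p_i)$ to guarantee that these extensions never close up a cycle, and then to locate the required copy $w(q_1,q_2)(D)$ far out along the resulting orbits. First I would arrange condition (2). For each $x\in D\setminus \mathrm{dom}(p_1)$ I adjoin a fresh point to serve as $q_1(x)$, defining its distances to the existing space by the free-amalgamation recipe over $\mathrm{dom}(p_1)$, so that the isometry constraint $d(q_1(x),q_1(y))=d(x,y)$ is respected and every new distance is a sum of existing distances, hence lies in the additive semigroup generated by the distances of $D$. Since the images are fresh, no new periodic point is created, so $Z$ is unchanged; this produces a partial isometry $q_1^{0}\supseteq p_1$ with $D\subseteq \mathrm{dom}(q_1^{0})$ and $Z(q_1^{0})=F(p_1)$, and I insert the fresh points into the linear order so that $q_1^{0}$ remains order-preserving.

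The heart of the argument is pushing $D$ away from $\mathrm{dom}(q_1)$ while keeping $F(p_1)\cap F(p_2)$ fixed. I would build towers $q_1^{0}\subseteq q_1^{1}\subseteq\cdots$ and $q_2^{0}\subseteq q_2^{1}\subseteq\cdots$ by alternately extending the two maps: whenever the current word-image of a point outside $F(p_1)\cap F(p_2)$ lies where the relevant map is not yet defined, I adjoin a fresh point as its image, again at free-amalgam distances and with order chosen to keep the map order-preserving. The key point, enforced by $Z(p_i)=F(p_i)$, is that I always have the freedom to send such a point to a fresh point rather than back toward $D$, which forces the orbits to escape monotonically and prevents oscillation; a point of $F(p_1)\cap F(p_2)$ stays fixed throughout, a point of $F(p_1)\setminus F(p_2)$ is moved by $q_2$, and a point outside $F(p_1)$ is moved by $q_1$.

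I would then take a suitable alternating word, for instance $w=(st)^{N}\in\mathsf{F}(s,t)$ with $N$ large enough that every point of $D\setminus(F(p_1)\cap F(p_2))$ has been pushed out along its orbit. By construction $w(q_1,q_2)$ is defined on all of $D$, fixes $F(p_1)\cap F(p_2)$ pointwise, and sends the remaining points of $D$ to fresh points. Verifying (3) then amounts to three checks: that $w(q_1,q_2)(D)$ is isometric to $D$ (automatic, since $w(q_1,q_2)$ is a partial isometry), that it meets $\mathrm{dom}(q_1)$ exactly in $F(p_1)\cap F(p_2)$, and that its cross-distances to $\mathrm{dom}(q_1)$ equal the shortest paths through $F(p_1)\cap F(p_2)$, which is precisely what the free-amalgam placement of the fresh points guarantees. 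Condition (1) holds because every step adjoined only fresh, non-cyclic points, and the semigroup condition on distances is preserved throughout, giving the moreover clause.

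The main obstacle I anticipate is the simultaneous bookkeeping of three constraints during the tower construction: keeping the extended maps order-preserving, which forces a consistent insertion of each fresh point into the linear order; ensuring that the alternating word genuinely escapes rather than letting a point oscillate between the two maps; and verifying that the final cross-distances are exactly the free-amalgam values with no accidental shortcut through an unintended point. The no-nontrivial-cycle hypothesis $Z(p_i)=F(p_i)$ is exactly the tool that defeats the oscillation problem, but reconciling it with order-preservation and with the rigidity of the free-amalgam metric is the delicate part that must be carried out with care.
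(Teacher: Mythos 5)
First, note that the paper does not actually prove this lemma: it quotes it directly from Slutsky (Theorem~4.12 of \cite{Slutsky}, with Remark~4.13 for the ordered version), so your attempt should be measured against Slutsky's original several-page argument rather than anything reproduced in the paper. Your overall strategy --- extend by fresh points at free-amalgam distances, then push $D$ away with a long alternating word --- is indeed the right family of ideas, and it is the flavor of Slutsky's proof. But the crucial step is asserted rather than proved, and as stated it is false.

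The gap is your claim that the free-amalgam placement of fresh points ``precisely guarantees'' that cross-distances from $w(q_1,q_2)(D)$ to ${\rm dom}(q_1)$ factor through $F(p_1)\cap F(p_2)$, together with the claim that orbits ``escape monotonically.'' Free amalgamation at each extension step produces distances that factor through the \emph{range of the map being extended}, and that range contains all of $F(p_i)$ --- in particular the anchor points of $F(p_1)\setminus F(p_2)$ and $F(p_2)\setminus F(p_1)$ --- not merely $F(p_1)\cap F(p_2)$. Concretely, take $D=\{a,u,v\}$ with $d(a,u)=d(a,v)=1$, $d(u,v)=\epsilon$ small, $p_1=\mathrm{id}$ on $\{a,u\}$ and $p_2=\mathrm{id}$ on $\{a,v\}$, so $F(p_1)\cap F(p_2)=\{a\}$. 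For \emph{any} extensions $q_i\supseteq p_i$ (free or not), $q_2(v)=v$ forces $d(q_2(u),v)=d(u,v)$, hence $d(q_2(u),u)\le 2\epsilon$; and $q_1(u)=u$ forces the distance to $u$ to be unchanged under every application of $q_1$. So the image of $u$ under a word can increase its distance to $u$ only in increments of order $\epsilon$ per alternation, while the free amalgam over $\{a\}$ demands $d(w(u),u)=2$. Thus no word of fixed shape such as $(st)^N$ works uniformly: the required word length depends on the distance ratios in $D$ (here roughly $1/\epsilon$), and even for long words one must prove, by induction along the word, that every relevant cross-distance --- including those to the intermediate trajectory points that are necessarily absorbed into ${\rm dom}(q_1)$ --- saturates exactly at the through-$F(p_1)\cap F(p_2)$ value. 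That quantitative escape-and-saturation argument is the substance of Slutsky's proof and is entirely missing from your sketch; the order-preservation bookkeeping you defer is also genuinely needed, but it is secondary to this metric gap.
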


     This result was originally proved for metric spaces as Theorem~4.12 of \cite{Slutsky}. Nevertheless, as Slutsky remarked (Remark 4.13 of \cite{Slutsky}), the above ordered version also holds.

We use the following notation in our further discussions. For a countable structure $\mathcal{A}$ and subset $B \subseteq |\mathcal{A}|$, we use ${\rm Aut}_B(\mathcal{A})$ to denote the subgroup of $\mbox{\rm Aut}(\mathcal{A})$ consisting of the automorphisms of $\mathcal{A}$ which fix each element of $B$. Note that if $B$ is finite, then ${\rm Aut}_B(\mathcal{A})$ is an open subgroup of $\mbox{\rm Aut}(\mathcal{A})$. 

 The following two lemmas are key technical results in our proof. The combination of their unordered versions was a theorem of Slutsky \cite{Slutsky}. 

     \begin{lemma}\label{lemma:density}
        Let $\Delta$ be a countable distance value set, and let $A,B \subseteq \mathbb{U}_{\Delta}^<$ be finite subsets. Suppose $A \cap B \neq \varnothing$. Then \[\langle{\rm Aut}_A(\mathbb{U}_\Delta^<), {\rm Aut}_B(\mathbb{U}_\Delta^<) \rangle\ is \ dense \ in \ {\rm Aut}_{A\cap B}(\mathbb{U}_\Delta^<). \]
     \end{lemma}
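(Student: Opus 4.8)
Before attempting density, observe that the inclusion $\langle {\rm Aut}_A(\mathbb{U}_\Delta^<),{\rm Aut}_B(\mathbb{U}_\Delta^<)\rangle\subseteq {\rm Aut}_{A\cap B}(\mathbb{U}_\Delta^<)$ is immediate, since an automorphism fixing $A$ (or $B$) pointwise fixes $A\cap B$ pointwise. Because the topology on ${\rm Aut}(\mathbb{U}_\Delta^<)$ is that of pointwise convergence on the discrete set $\mathbb{U}_\Delta^<$, density amounts to the following: for every $g\in{\rm Aut}_{A\cap B}(\mathbb{U}_\Delta^<)$ and every finite $S$, which we may enlarge to assume $A\cup B\subseteq S$, there is $h$ in the generated group with $h\restriction S=g\restriction S$. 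My plan is to move both $S$ and $g(S)$ into positions that are free over $A\cap B$ by means of elements of the generated group, and then to bridge the two free copies by a single automorphism fixing $A$.

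The free-position moves are supplied by Lemma~\ref{lemma:Extensions}. I would apply it with $D=S$ and the order-preserving partial isometries $p_1={\rm id}_A$, $p_2={\rm id}_B$; then $Z(p_i)=F(p_i)$ holds trivially, and since $A\cap B\neq\varnothing$ the hypothesis $F(p_1)\cap F(p_2)=A\cap B\neq\varnothing$ is met. The lemma then yields order-preserving extensions $q_1\supseteq p_1$, $q_2\supseteq p_2$ and a word $w\in\mathsf{F}(s,t)$ such that $w(q_1,q_2)(S)$ is free over $A\cap B$ from the static set ${\rm dom}(q_1)\supseteq A$. By the ``Moreover'' clause the distances occurring lie in the additive semigroup generated by those of $S$; as $\Delta$ is a distance value set, closed under $(x,y)\mapsto\min\{x+y,\sup\Delta\}$, the extended space is again a finite ordered $\Delta$-metric space and embeds into $\mathbb{U}_\Delta^<$. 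Since $q_1$ fixes $A$ and $q_2$ fixes $B$, weak homogeneity extends them to $\sigma_1\in{\rm Aut}_A(\mathbb{U}_\Delta^<)$ and $\sigma_2\in{\rm Aut}_B(\mathbb{U}_\Delta^<)$; then $u:=w(\sigma_1,\sigma_2)$ belongs to the generated group, fixes $A\cap B$, and $u(S)=w(q_1,q_2)(S)$ is free over $A\cap B$ from $A$. Applying the lemma a second time to $g(S)\cup A\cup B$ produces, in the same way, an element $v$ of the generated group fixing $A\cap B$ with $v(g(S))$ free over $A\cap B$ from $A$.

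To bridge, set $\theta=v g u^{-1}$ on $u(S)$; this is an order-preserving isometry onto $v(g(S))$ fixing $A\cap B$ pointwise (as $u,v,g$ all fix $A\cap B$). Define a map on $A\cup u(S)$ by the identity on $A$ and $\theta$ on $u(S)$; the free-amalgam convention gives $A\cap u(S)=A\cap B$, where the two prescriptions agree, so the map is well defined. For $a\in A$ and $t\in u(S)$, freeness gives $d(a,t)=\min_{c\in A\cap B}\big(d(a,c)+d(c,t)\big)$ and likewise $d(a,\theta(t))=\min_{c\in A\cap B}\big(d(a,c)+d(c,\theta(t))\big)$; since $\theta$ fixes $A\cap B$ and preserves distances these coincide, so the map is an order-preserving isometry onto $A\cup v(g(S))$. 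Weak homogeneity extends it to $\tau\in{\rm Aut}_A(\mathbb{U}_\Delta^<)$. Then $h:=v^{-1}\tau u$ lies in the generated group, and for $s\in S$ we have $\tau(u(s))=\theta(u(s))=vg(s)$, whence $h(s)=v^{-1}vg(s)=g(s)$; that is, $h\restriction S=g\restriction S$, as required.

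The main obstacle I anticipate is the careful deployment of Lemma~\ref{lemma:Extensions} in its ordered form: one must verify that the order-preserving free amalgam genuinely places $u(S)$ and $v(g(S))$ freely over $A\cap B$ relative to all of $A$ (not merely relative to $A\cap B$), since this is exactly what lets the bridging isometry extend while fixing $A$ pointwise. The hypotheses $A\cap B\neq\varnothing$ (furnishing the common fixed point required by the lemma) and $Z(p_i)=F(p_i)$ (licensing extension to a genuine automorphism without forced identifications) are precisely what make the construction go through. A secondary, routine point is confirming that every intermediate space remains in $\mathcal{K}_\Delta^<$, which rests on the closure of the distance value set $\Delta$ under capped addition.
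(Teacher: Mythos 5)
Your overall skeleton is the right one, and it matches the paper's strategy: use Lemma~\ref{lemma:Extensions} to move the relevant finite set into metrically free position over $A\cap B$ by an element of $\langle{\rm Aut}_A(\mathbb{U}_\Delta^<),{\rm Aut}_B(\mathbb{U}_\Delta^<)\rangle$, bridge the two copies by a single automorphism fixing $A$, and conjugate. However, there is a genuine gap at the decisive step, and it is exactly the step your closing paragraph flags without resolving: the assertion that the bridge map (identity on $A$, $\theta=vgu^{-1}$ on $u(S)$) is \emph{order-preserving}. The freeness conclusion of Lemma~\ref{lemma:Extensions} is purely metric --- the free amalgam prescribes distances, not positions in the linear order --- so the lemma gives you no control over how the points of $u(S)$ (resp.\ $v(g(S))$) interleave with the points of $A\setminus(A\cap B)$ in the order $<^{\mathbb{U}_\Delta}$; the only order information you have is relative to $A\cap B$ (because $u,v,g$ fix $A\cap B$ and are order-preserving). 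Meanwhile, any automorphism $\tau$ fixing $A$ pointwise can only permute points within each order-gap of $A$: if $a\in A$ and $a<t$, then $a=\tau(a)<\tau(t)$. Hence your $\tau$ can exist only if $u(s)$ and $vg(s)$ lie in the same gap of $A$ for every $s\in S$, and two uncoordinated applications of Lemma~\ref{lemma:Extensions} cannot guarantee this: nothing prevents $u(s)<a<vg(s)$ for some $a\in A$, in which case no order-preserving map fixing $A$ pointwise sends $u(s)$ to $vg(s)$. So the sentence ``the map is an order-preserving isometry onto $A\cup v(g(S))$'' is unjustified, and weak homogeneity cannot be invoked.

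This missing order control is precisely where the paper's proof does its real work. The paper applies Lemma~\ref{lemma:Extensions} only once, to $D=C\cup p(C)$ (so the copies of $C$ and $p(C)$ stay coordinated inside one free copy $w(D)$), enumerates $A\cup B$ in interleaved order $x_0<a_1<b_1<\cdots<a_n<b_n$, and then constructs $2n$ further order-preserving partial isometries $g_1,\dots,g_{2n}$, alternately fixing $A$ and $B$ pointwise, which walk the free copy of $D$ down through the gaps --- first into $(a_n,b_n)$ by a map fixing $A$, then into $(b_{n-1},a_n)$ by a map fixing $B$, and so on --- until the final copy $w_{2n}(D)$ lies entirely below $a_1$ while each point keeps its position relative to $x_0$. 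Only in that standard position are the order relations between the copy and $A$ determined by relations with $x_0$ alone, and only then is the bridge $r\colon a\mapsto a,\ w_{2n}(c)\mapsto w_{2n}(p(c))$ order-preserving. Your proposal has no counterpart to this propagation, and it cannot be extracted from Lemma~\ref{lemma:Extensions} itself (this is also why the alternation, and hence the hypothesis that both ${\rm Aut}_A$ and ${\rm Aut}_B$ are available, is used so heavily). A secondary, fixable point: the semigroup-generated distances produced by Lemma~\ref{lemma:Extensions} need not lie in $\Delta$; one must actually redefine the metric by capping at $\sup\Delta$ and check the partial isometries survive, as the paper does, rather than merely cite closure of $\Delta$ under capped addition.
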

     \begin{proof}
        For notational simplicity, we assume $A \cap B =\{x_0\}$. Suppose $A=\{a_0,\dots,a_n\}$ and $B=\{b_0,\dots,b_m\}$. Again, for definiteness and notational simplicity, assume $m=n$ and $a_0=b_0=x_0 < a_1 <b_1 <\dots<a_n <b_n$. Note that in general, $A \cap B =\{x_0,x_1,\dots, x_k\}$ where $x_0<x_1<\cdots<x_k$ and the rest of the points in $A,B$ can be in the gaps between $x_i$s including below $x_0$ and above $x_k$. For the general case, the argument is similar.

        Let $C$ be a finite subset of $\mathbb{U}_\Delta^<$ and let $p \in {\rm Aut}_{A\cap B}(\mathbb{U}_\Delta^<)$. It suffices to show that there is a $q \in \langle {\rm Aut}_A(\mathbb{U}_\Delta^<), {\rm Aut}_B(\mathbb{U}_\Delta^<) \rangle$ such that $p \upharpoonright C = q\upharpoonright C$. Without loss of generality, we may assume $A \subseteq C$ and $B \subseteq C$. Let $D = C \cup p(C)$. We find (to be explicitly defined later)
$$q=w(q_1,q_2)^{-1}g_1^{-1}\dots g_{2n}^{-1}rg_{2n}\dots g_1w(q_1,q_2), $$
where $g_{2i-1}\in {\rm Aut}_{A}(\mathbb{U}_\Delta^<)$ and $g_{2i}\in {\rm Aut}_{B}(\mathbb{U}_\Delta^<)$. Roughly speaking, we start from an isometric copy of $D$ in $\mathbb{U}_\Delta^<$ where all the new points (other than $A\cap B$) are appropriately distributed between $x_i$s. Using the combination of $g_i$s, we propagate the points in the initial copy of $D$ to create the correct ordering of points in the final copy of $D$.

As a substructure of $\mathbb{U}^<_\Delta$, $D$ is a finite ordered metric space. Define two order-preserving partial isometries $p_1,p_2$ of $D$ by letting $p_1(x)=x$ for all $x \in A$ and $p_2(x)=x$ for all $x \in B$. Now applying Lemma~\ref{lemma:Extensions} to $D$ and $p_1,p_2$, we get a finite ordered metric space $D'$ extending $D$, order-preserving partial isometries $q_1,q_2$ of $D'$ which are extensions of $p_1,p_2$, respectively, and a word $w \in \mathsf{F}(s,t)$ such that 
\begin{enumerate}
\item $Z(q_1)=Z(p_1)=F(p_1)=A$ and $Z(q_2)=Z(p_2)=F(p_2)=B$;
\item $D \subseteq {\rm dom}(q_1)$;
\item ${\rm dom}(q_1) \cup w(q_1,q_2)(D)$ is the free amalgam of $\mbox{dom}(q_1)$ and $w(q_1,q_2)(D)$ over $\{x_0\}$. 
\end{enumerate}
For notational simplicity we write $w(q_1,q_2)$ as $w$. 
      
We note that $(A \cup B) \cap w(D) = \{x_0\}$. This is because $x_0\in A\cup  B\subseteq C\subseteq D\subseteq \mbox{dom}(q_1)$ and by (3), $\mbox{dom}(q_1)\cup w(D)$ is the free amalgam of $\mbox{dom}(q_1)$ and $w(D)$ over $\{x_0\}$. 

Let $w_1(D)$ be an isomorphic copy of $w(D)$ so that $w_1(D)$ and $w(D)$ have exactly one common point $x_0$. For each $y\in D$, the point in $w_1(D)$ corresponding to $w(y)\in w(D)$ will be denoted by $w_1(y)$. Let $D_1'$ be the free amalgam of $D'$ and $w_1(D)$ over $\{x_0\}$. Figure~\ref{fig:1} illustrates the containment relationship among the sets considered so far. 

\begin{figure}[h]
\begin{tikzpicture}[scale=0.1]

\draw (0,0) to (50,50) to (0,100) to (0,0);
\draw (10,10) to (10,40) to (50,50) to (90,40) to (90,10) to (50,50);
\draw (6, 94) to (6,61) to (50,50);
\draw (10, 60) to (10,90);
\draw (18, 58) to (18,82);
\draw (26, 56) to (26,62) to (50,50);
\draw (26, 74) to (26, 68) to (50,50);
\draw[->] (10,93) to (8,90);
\filldraw[fill=black, very thick] (50,50) circle (0.3);

\node[left] at (8, 50) {$D'$};
\node[left] at (25,30) {$w(D)$};
\node[left] at (88, 31) {$w_1(D)$};
\node[left] at (16, 73) {$D$};
\node[left] at (24, 67) {$C$};
\node[left] at (31, 69) {$A$};
\node[left] at (31, 58) {$B$};
\node[left] at (26,95) {$\mbox{dom}(q_1)$};
\node[left] at (56,52) {$x_0$};

\end{tikzpicture}
\caption{The composition of $D_1'$. \label{fig:1}}
\end{figure}
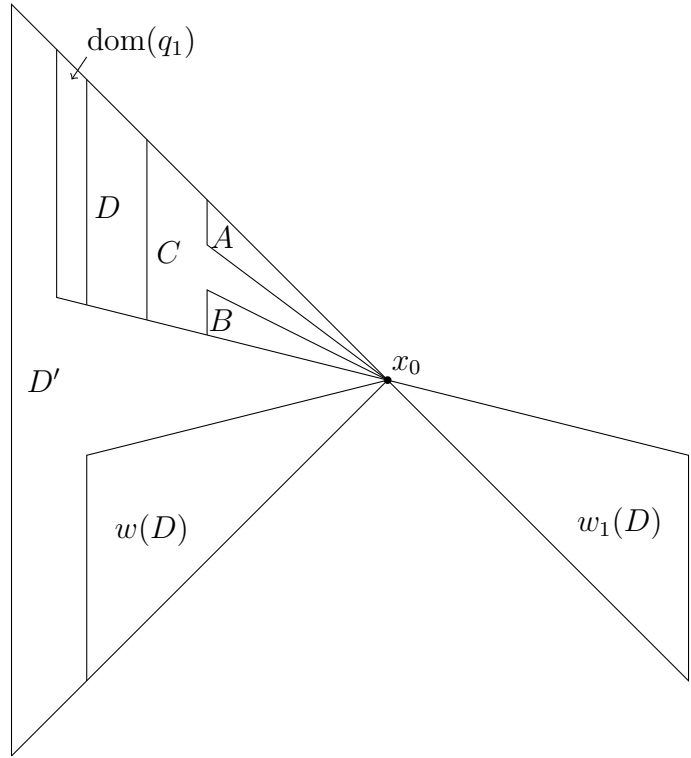
      
Extend the ordering on $D'$ to $D_1'$ so that the following hold:
    \begin{enumerate}
       \item[(a)] for any $y_1,y_2 \in D$, $w_1(y_1) < w_1(y_2)$ if and only if $w(y_1) < w(y_2)$;
       \item[(b)] for any $u \in A \cup B$ and $y \in D$, if $w(y) < b_n=\max(A\cup B)$, then $u<w_1(y)$ if and only if $u<w(y)$;
       \item[(c)] for any $y \in D$, if $w(y) > b_n$, then $a_n < w_1(y) <b_n$. 
    \end{enumerate}
    Such an extension clearly exists. 

Now define $g_1\colon A\cup w(D)\to A\cup w_1(D)$ by letting $g_1(a) = a$ for all $a\in A$ and $g_1(w(y)) = w_1(y)$ for all $y\in D$. Then $g_1$ is order-preserving by the above requirements (a) through (c). To see that $g_1$ is a partial isometry, let $a \in A$ and $y\in D$. Since $\mbox{dom}(q_1)\cup w(D)$ is the free amalgam of $\mbox{dom}(q_1)$ and $w(D)$ over $\{x_0\}$ and $D_1'$ is the free amalgam of $D'$ and $w_1(D)$ over $\{x_0\}$, we have \[d(a, w(y))=d(a,x_0)+d(x_0,w(y))=d(a,x_0)+d(x_0,w_1(y))=d(a,w_1(y)).\] 
    Intuitively, $g_1$ sends $w(y)$ to a copy which is smaller than $b_n$ and fixes $A$ pointwise. Consequently, we have $w_1(D)<b_n$ and, for any $y \in D$, $w_1(y) <x_0$ if and only if $w(y)<x_0$.
    
Let $w_2(D)$ be a fresh isomorphic copy of $w(D)$ such that $w_2(D)$ and $w(D)$ has exactly one common point $x_0$. Let $D_2'$ be the free amalgam of $D_1'$ and $w_2(D)$ over $\{x_0\}$. Extend the ordering on $D_1'$ to $D_2'$ so that the following hold:
 \begin{enumerate}
       \item[(d)] for any $y_1,y_2 \in D$, $w_2(y_1) < w_2(y_2)$ if and only if $w_1(y_1) < w_1(y_2)$;
       \item[(e)] for any $u \in A \cup B$ and $y \in D$, if $w(y) < a_n$, then $u<w_2(y)$ if and only if $u<w_1(y)$;
       \item[(f)] for any $y \in D$, if $w(y) > a_n$, then $b_{n-1} < w_2(y) <a_n$. 
    \end{enumerate}
Define an order-preserving partial isometry $g_2\colon B\cup w_1(D)\to B\cup w_2(D)$ by letting $g_2(b)=b$ for all $b\in B$ and $g_2(w_1(y))=w_2(y)$ for all $y\in D$. Moreover, $w_2(D) < a_n$ and, for any $y \in D$, $w_2(y) <x_0$ if and only if $w_1(y)<x_0$.
    
Repeating this construction, we obtain, for $1\leq i \leq 2n$, $D_i'=D' \cup w_1(D) \cup \dots \cup w_i(D)$, which can be viewed as the free amalgam of $w_i(D)$ and the remaining part over $\{x_0\}$, an ordering on $D_{2n}'$ extending that on $D'$, and order-preserving partial isometries $g_1, g_2, g_3, \dots,g_{2n}$ such that 
\begin{enumerate}
\item[(i)] for any $1\leq i\leq n$, $g_{2i-1}(a)=a$ for all $a\in A$, and $g_{2i}(b)=b$ for all $b\in B$;
\item[(ii)] for any $1\leq i\leq n$ and $y\in D$, $g_{2i}(w_{2i-1}(y))=w_{2i}(y)$, and $g_{2i-1}(w_{2i-2}(y))=w_{2i-1}(y)$ if $i>1$.
\end{enumerate}
Moreover, we have $w_{2n}(D) <a_1$ and, for any $y \in D$, $w_{2n}(y) <x_0$ if and only if $w(y)<x_0$. 

Define a map $$r\colon A\cup w_{2n}(C)\to A\cup w_{2n}(p(C))$$ by letting $r(a)=a$ for all $a\in A$ and $r(w_{2n}(c))=w_{2n}(p(c))$ for all $c\in C$. Then for $a \in A$ and $c \in C$,  we have
    \begin{align*}
    d(a,w_{2n}(c)) &=d(a,x_0)+d(x_0,w_{2n}(c))=d(a,x_0)+d(x_0,w(c))\\&=d(a,x_0)+d(x_0,c)=d(a,x_0)+d(x_0,p(c))\\&=d(a,x_0)+d(x_0,w(p(c))) \\
& =d(a,x_0)+d(x_0,w_{2n}(p(c)))=d(a,w_{2n}(p(c))) \\
&=d(r(a),r(w_{2n}(c))).
    \end{align*} 
Thus $r$ is a partial isometry. Note that $w$ and $p$ are order-preserving and each fixes $x_0$. Thus for any $c\in C$, we have 
    \begin{align*}
      w_{2n}(c)<x_0 &\iff w(c)<x_0 \iff c<x_0 \\&\iff p(c) <x_0 \iff w(p(c))<x_0 \\&\iff w_{2n}(p(c))<x_0. 
    \end{align*}
    Since for all $y \in D$, $w_{2n}(y)<a_1$, we conclude that $r$ is an order-preserving partial isometry.

By Lemma~\ref{lemma:Extensions} and our construction, the distances in $D_{2n}'$ are all taken from the additive semigroup generated by the distances in $D$. Now redefine the metric on $D_{2n}'$ by taking the minimum of the original distance value and $\sup\Delta$. Call the resulting metric space $D''$. Since $D$ is a $\Delta$-metric space, it follows that $D''$ is a $\Delta$-metric space, and, as an ordered metric space, $D''$ is an extension of $D$. Moreover, each of the order-preserving partial isometries $q_1, q_2, g_1, g_2, \dots, g_{2n}, r$ continues to be order-preserving partial isometries of $D''$. 

Now by the weak homogeneity of $\mathbb{U}_{\Delta}^<$, we can extend $D$ to an isomorphic copy of $D''$ as a substructure of $\mathbb{U}_\Delta^<$. By the ultrahomogeneity of $\mathbb{U}_\Delta^<$, we can extend all the above order-preserving partial isometries to full automorphisms of $\mathbb{U}_{\Delta}^<$. Denote the resulting automorphisms by the same notation. Then for $1\leq i\leq 2n$, we have $r, q_1,g_i \in {\rm Aut}_A(\mathbb{U}_{\Delta}^<)$ if $i$ is odd, and $q_2,g_i \in {\rm Aut}_B(\mathbb{U}_{\Delta}^<)$ if $i$ is even. Finally, for any $c \in C$, \[p(c)=w(q_1,q_2)^{-1}g_1^{-1}\dots g_{2n}^{-1}rg_{2n}\dots g_1w(q_1,q_2)(c).\]
Thus if we let 
$$q=w(q_1,q_2)^{-1}g_1^{-1}\dots g_{2n}^{-1}rg_{2n}\dots g_1w(q_1,q_2), $$
then $q\in \langle {\rm Aut}_A(\mathbb{U}_\Delta^<), {\rm Aut}_B(\mathbb{U}_\Delta^<) \rangle$ and $p\upharpoonright C=q\upharpoonright C$ as required.
     \end{proof}

     \begin{lemma}\label{theorem:density}
        Let $\Delta$ be a countable distance value set, and let $A,B \subseteq \mathbb{U}_{\Delta}^<$ be finite subsets. Suppose $A \cap B = \varnothing$. Then \[\langle {\rm Aut}_A(\mathbb{U}_{\Delta}^<), {\rm Aut}_B(\mathbb{U}_{\Delta}^<) \rangle  \ is \ dense \ in \  {\rm Aut}(\mathbb{U}_{\Delta}^<).\]
\end{lemma}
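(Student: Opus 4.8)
The plan is to pass to the closure $H:=\overline{\langle {\rm Aut}_A(\mathbb{U}_\Delta^<),{\rm Aut}_B(\mathbb{U}_\Delta^<)\rangle}$, a closed subgroup of ${\rm Aut}(\mathbb{U}_\Delta^<)$, and to prove that $H$ is the whole group by establishing two facts: (i) $H$ contains the full point stabilizer ${\rm Aut}_{\{x\}}(\mathbb{U}_\Delta^<)$ for \emph{every} single point $x$, and (ii) $H$ acts transitively on $\mathbb{U}_\Delta^<$. Granting these, the conclusion is the standard orbit--stabilizer reduction: fix any point $x$; for $g\in {\rm Aut}(\mathbb{U}_\Delta^<)$, transitivity of $H$ yields $h\in H$ with $h(x)=g(x)$, whence $(h^{-1}g)(x)=x$, so $h^{-1}g\in {\rm Aut}_{\{x\}}(\mathbb{U}_\Delta^<)\subseteq H$ and therefore $g\in H$. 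Thus $H={\rm Aut}(\mathbb{U}_\Delta^<)$, which is exactly the asserted density.

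For (i) I would bootstrap from the already-proved intersecting case, Lemma~\ref{lemma:density}. Given a point $x$, set $A'=A\cup\{x\}$ and $B'=B\cup\{x\}$. Since $A\cap B=\varnothing$, a direct computation gives $A'\cap B'=\{x\}$, which is nonempty, so Lemma~\ref{lemma:density} applies and shows that $\langle {\rm Aut}_{A'}(\mathbb{U}_\Delta^<),{\rm Aut}_{B'}(\mathbb{U}_\Delta^<)\rangle$ is dense in ${\rm Aut}_{\{x\}}(\mathbb{U}_\Delta^<)$. Because ${\rm Aut}_{A'}\subseteq {\rm Aut}_A$ and ${\rm Aut}_{B'}\subseteq {\rm Aut}_B$, this generated group sits inside $\langle {\rm Aut}_A,{\rm Aut}_B\rangle$; passing to closures gives ${\rm Aut}_{\{x\}}(\mathbb{U}_\Delta^<)\subseteq H$. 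As $x$ is arbitrary, $H$ contains every point stabilizer. This is precisely where disjointness is used: it forces the intersection $A'\cap B'$ to collapse to a single point rather than to $A\cap B$ enlarged by $x$.

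For (ii) it suffices, given distinct $y,z$, to exhibit a point $x_0$ and an element of ${\rm Aut}_{\{x_0\}}(\mathbb{U}_\Delta^<)$ carrying $y$ to $z$, since such an element already lies in $H$ by (i). Let $s=d(y,z)\in\Delta$ and consider the ordered $\Delta$-metric space on $\{x_0,y,z\}$ in which $x_0$ lies below both $y$ and $z$ and $d(x_0,y)=d(x_0,z)=s$. This is an equilateral triangle with side $s\in\Delta$, hence a legitimate member of $\mathcal{K}_\Delta^<$, so by the weak homogeneity of $\mathbb{U}_\Delta^<$ such an $x_0$ can be found realizing this configuration over the two given points. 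The map fixing $x_0$ and sending $y\mapsto z$ is then an order-preserving partial isometry, which extends by ultrahomogeneity to some $h\in {\rm Aut}_{\{x_0\}}(\mathbb{U}_\Delta^<)\subseteq H$ with $h(y)=z$, witnessing transitivity. Using an equilateral triangle avoids any appeal to density or boundedness of $\Delta$, so the argument is uniform.

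The conceptual content — and the only place real care is needed — is step (i): recognizing that the disjoint case can be reduced to the intersecting Lemma~\ref{lemma:density} by temporarily inserting a common fixed point, and that this yields not one useful stabilizer but \emph{all} point stabilizers simultaneously. Once that is in hand, transitivity and the orbit--stabilizer step are routine. The main thing to be careful about is that Lemma~\ref{lemma:density} produces density in ${\rm Aut}_{A'\cap B'}$, so one must verify $A'\cap B'=\{x\}$ exactly; this is immediate from $A\cap B=\varnothing$ but is the load-bearing computation of the proof.
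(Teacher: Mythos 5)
Your proof is correct, and it reaches the conclusion by a genuinely different decomposition than the paper, although both arguments hinge on the same key trick: adjoining an auxiliary point $x_0$ to both $A$ and $B$ so that Lemma~\ref{lemma:density} applies with $A'\cap B'=\{x_0\}$. The paper uses this trick just once, but \emph{adapted to the data}: given $p$ and the finite set $C$, it realizes (by weak homogeneity) a cone point $x_0$ lying at distance $\mathrm{diam}(C\cup p(C))$ from every point of $C\cup p(C)$ and below all of them in the order, so that $p\upharpoonright C\cup\{(x_0,x_0)\}$ is an order-preserving partial isometry; extending this to $p'\in{\rm Aut}_{\{x_0\}}(\mathbb{U}_\Delta^<)$ and applying Lemma~\ref{lemma:density} to $A'$, $B'$, $p'$ immediately produces the required approximation of $p$ on $C$. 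You instead apply Lemma~\ref{lemma:density} abstractly — with $x_0$ unrelated to $p$ — to conclude that the closed subgroup $H$ contains \emph{every} point stabilizer, then prove transitivity of $H$ by a separate cone-point (equilateral triangle) construction, and finish with the orbit--stabilizer argument. The paper's route is more economical (one homogeneity construction, one application of the lemma); yours is more modular and yields a slightly stronger intermediate picture ($H$ is transitive and contains all point stabilizers), at the cost of a second homogeneity argument and the closure bookkeeping. Note also that the two cone-point constructions are cousins: the paper cones over all of $C\cup p(C)$ at the diameter distance, while you only cone over $\{y,z\}$ at distance $d(y,z)$, pushing the rest of the work into group theory. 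All of your steps check out, including the load-bearing points you flag: $A'\cap B'=\{x\}$ exactly because $A\cap B=\varnothing$, point stabilizers are closed (indeed clopen), and the equilateral triangle with side $s\in\Delta$ is a legitimate member of $\mathcal{K}_\Delta^<$.
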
    

\begin{proof}   Let $p \in {\rm Aut}(\mathbb{U}_{\Delta}^<)$ and let $C \subseteq \mathbb{U}_{\Delta}^<$ be a finite subset. We show that there is a $q \in \langle {\rm Aut}_A(\mathbb{U}_{\Delta}^<), {\rm Aut}_B(\mathbb{U}_{\Delta}^<) \rangle$ such that $p\upharpoonright C=q\upharpoonright C$. Again we may assume $A\cup B \subseteq C$. Consider the ordered $\Delta$-metric space $C \cup p(C)$. Extend it to an ordered $\Delta$-metric space $C \cup p(C) \cup \{x_0\}$, where the metric is defined by 
$$d(x_0,y)=\mbox{diam}(C\cup p(C))=\max\{d(y_1,y_2)\colon y_1,y_2 \in C \cup p(C)\}$$
for any $y\in C\cup p(C)$, and the ordering is defined by $x_0 <y$ for all $y\in C\cup p(C)$.
           Such an extension can be found in $\mathbb{U}_{\Delta}^<$ by the weak homogeneity of $\mathbb{U}^<_\Delta$, and so we may assume $x_0 \in \mathbb{U}_{\Delta}^<$. Now consider $A' = A \cup \{x_0\}$, $B' = B \cup \{x_0\}$ and let $p' \in {\rm Aut}_{x_0}(\mathbb{U}_\Delta^<)$ be an extention of $p\upharpoonright C \cup \{(x_0,x_0)\}$. Applying Lemma~\ref{lemma:density} to $A', B'$ and $p'$, we get a $q \in \langle {\rm Aut}_{A'}(\mathbb{U}_{\Delta}^<), {\rm Aut}_{B'}(\mathbb{U}_{\Delta}^<) \rangle$ such that $q\upharpoonright C\cup\{x_0\}=p'\upharpoonright C\cup\{x_0\}$. Then in particular $q \in \langle {\rm Aut}_A(\mathbb{U}_{\Delta}^<), {\rm Aut}_B(\mathbb{U}_{\Delta}^<) \rangle$ and $q\upharpoonright C=p\upharpoonright C$ as required. 
        \end{proof}
 
We also need the following two lemmas about proper open subgroups of $\mbox{\rm Aut}(\mathbb{U}^<_\Delta)$.    

     \begin{lemma}\label{lemma:f_orbit}
        Let $V$ be a proper open subgroup of ${\rm Aut}(\mathbb{U}_{\Delta}^<)$. Then there exists an element $x \in \mathbb{U}_{\Delta}^<$ such that $V\cdot x$ is finite.
        \begin{proof}
           Toward a contradiction, assume $V\cdot x$ is infinite for all $x\in \mathbb{U}_{\Delta}^<$. We prove that $V$ is dense. Since $V$ is also closed, it follows that $V = {\rm Aut}(\mathbb{U}_{\Delta}^<)$, contradicting the properness of $V$. 

Note that a basic open neighborhood of the identity element of $\mbox{\rm Aut}(\mathbb{U}^<_\Delta)$ has the form ${\rm Aut}_{A}(\mathbb{U}_{\Delta}^<)$, where $A$ is a finite subset of $\mathbb{U}^<_\Delta$. Since $V$ is an open subgroup, there is a finite $A\subseteq \mathbb{U}^<_\Delta$ such that ${\rm Aut}_{A}(\mathbb{U}_{\Delta}^<) \subseteq V$. By Neumann's lemma (see Corollary 4.2.2 of Hodges \cite{Hodges}), there exists $g \in V$ such that $g(A)\cap A = \varnothing$. 
           
By Lemma~\ref{theorem:density}, $\langle {\rm Aut}_{A}(\mathbb{U}_{\Delta}^<), {\rm Aut}_{g(A)}(\mathbb{U}_{\Delta}^<) \rangle$ is dense in ${\rm Aut}(\mathbb{U}_{\Delta}^<)$. This group is contained in $V$, and thus $V$ is dense. 
        \end{proof}
     \end{lemma}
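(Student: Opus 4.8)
The plan is to argue by contradiction: assuming that every orbit $V\cdot x$ is infinite, I will show that $V$ must equal all of ${\rm Aut}(\mathbb{U}_\Delta^<)$, contradicting the hypothesis that $V$ is proper. Two standing facts drive the argument. First, an open subgroup of a topological group is automatically closed, since its complement is a union of cosets each of which is open; hence it will suffice to prove that $V$ is \emph{dense}. Second, Lemma~\ref{theorem:density} manufactures dense subgroups out of a pair of pointwise stabilizers whose supports are disjoint, and the whole strategy is to place such a pair inside $V$.

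First I would use the openness of $V$ to fix a finite set $A\subseteq\mathbb{U}_\Delta^<$ with ${\rm Aut}_A(\mathbb{U}_\Delta^<)\subseteq V$; this is possible because the stabilizers ${\rm Aut}_A(\mathbb{U}_\Delta^<)$ form a neighborhood base at the identity. Next, under the standing assumption that the $V$-action on $\mathbb{U}_\Delta^<$ has no finite orbits, Neumann's lemma (Corollary~4.2.2 of \cite{Hodges}) applied to this action produces an element $g\in V$ with $g(A)\cap A=\varnothing$. The reason for moving $A$ off itself is the conjugation identity ${\rm Aut}_{g(A)}(\mathbb{U}_\Delta^<)=g\,{\rm Aut}_A(\mathbb{U}_\Delta^<)\,g^{-1}$: since both $g$ and ${\rm Aut}_A(\mathbb{U}_\Delta^<)$ lie in $V$, this conjugate subgroup is again contained in $V$.

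Having secured two pointwise stabilizers ${\rm Aut}_A(\mathbb{U}_\Delta^<)$ and ${\rm Aut}_{g(A)}(\mathbb{U}_\Delta^<)$ inside $V$ with disjoint supports $A$ and $g(A)$, I would invoke Lemma~\ref{theorem:density} to conclude that $\langle{\rm Aut}_A(\mathbb{U}_\Delta^<),{\rm Aut}_{g(A)}(\mathbb{U}_\Delta^<)\rangle$ is dense in ${\rm Aut}(\mathbb{U}_\Delta^<)$. Because this generated group sits inside $V$, the subgroup $V$ is itself dense; being also closed, it equals the whole group, which is the desired contradiction.

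The substantive content has already been packaged into Lemma~\ref{theorem:density}, so the remaining work here is largely organizational. The one point that deserves care, and which I expect to be the only real obstacle, is the application of Neumann's lemma: I must verify that its hypothesis is exactly the negation of the conclusion I am chasing (every orbit of the $V$-action infinite), and that it yields an element of $V$ rather than of the full automorphism group. Once the conjugation identity ${\rm Aut}_{g(A)}=g\,{\rm Aut}_A\,g^{-1}$ is recorded, the contradiction follows immediately by combining openness, closedness, and density.
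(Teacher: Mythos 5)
Your proposal is correct and follows essentially the same route as the paper: contradiction via openness, Neumann's lemma applied to the $V$-action to move $A$ off itself, and Lemma~\ref{theorem:density} to get density of $\langle{\rm Aut}_A(\mathbb{U}_\Delta^<),{\rm Aut}_{g(A)}(\mathbb{U}_\Delta^<)\rangle$ inside $V$. In fact you make explicit a step the paper leaves implicit, namely that ${\rm Aut}_{g(A)}(\mathbb{U}_\Delta^<)=g\,{\rm Aut}_A(\mathbb{U}_\Delta^<)\,g^{-1}\subseteq V$ because $g\in V$, which is exactly why the generated group lies in $V$.
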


     %We use the following lemma to reconstruct $\mathbb{U}_\Delta^<$. 
     \begin{lemma}\label{lem:maxopen}
        Let $V$ be a proper open subgroup of ${\rm Aut}(\mathbb{U}_{\Delta}^<)$. The following are equivalent:
        \begin{enumerate}
           \item[\rm (1)] There exists a unique $x \in \mathbb{U}_{\Delta}^<$ such that $V = {\rm Aut}_{\{x\}}(\mathbb{U}_{\Delta}^<)$. 
           \item[\rm (2)] $V$ is maximal among proper closed subgroups of ${\rm Aut}(\mathbb{U}_{\Delta}^<)$. 
        \end{enumerate}
        \end{lemma}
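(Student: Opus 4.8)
The plan is to prove the two implications separately, leaning on the density Lemma~\ref{theorem:density} for $(1)\Rightarrow(2)$ and on the finite-orbit Lemma~\ref{lemma:f_orbit} for $(2)\Rightarrow(1)$.

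For $(1)\Rightarrow(2)$, I would first note that, being open, $V={\rm Aut}_{\{x\}}(\mathbb{U}_\Delta^<)$ is automatically closed, and it is proper since ultrahomogeneity supplies an automorphism moving $x$. Now suppose $W$ is a closed subgroup with $V\subsetneq W$. Choose $g\in W\setminus V$; then $y:=g(x)\neq x$, and the conjugation identity $gVg^{-1}={\rm Aut}_{\{y\}}(\mathbb{U}_\Delta^<)$ shows $W\supseteq {\rm Aut}_{\{y\}}(\mathbb{U}_\Delta^<)$. Since $\{x\}\cap\{y\}=\varnothing$, Lemma~\ref{theorem:density} gives that $\langle {\rm Aut}_{\{x\}}(\mathbb{U}_\Delta^<),{\rm Aut}_{\{y\}}(\mathbb{U}_\Delta^<)\rangle$ is dense in ${\rm Aut}(\mathbb{U}_\Delta^<)$. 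As $W$ is closed and contains this subgroup, $W={\rm Aut}(\mathbb{U}_\Delta^<)$, so $V$ is maximal.

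For $(2)\Rightarrow(1)$, the starting point is Lemma~\ref{lemma:f_orbit}, which yields a point $x$ whose $V$-orbit $F:=V\cdot x$ is finite. The key observation is that $V$ setwise stabilizes $F$, since for $g\in V$ we have $g(F)=(gV)\cdot x=V\cdot x=F$, so $V$ lies in the setwise stabilizer of $F$. Here the linear order does the crucial work: any order-preserving bijection that setwise fixes the finite ordered set $F$ must fix each of its points, so the setwise stabilizer of $F$ coincides with the pointwise stabilizer ${\rm Aut}_F(\mathbb{U}_\Delta^<)$, which is easily seen to be a proper closed subgroup containing $V$. Maximality then forces $V={\rm Aut}_F(\mathbb{U}_\Delta^<)$. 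Finally I would show $|F|=1$: if $F$ had at least two points, then for any $x_1\in F$ the subgroup ${\rm Aut}_F(\mathbb{U}_\Delta^<)$ would be strictly contained in the proper closed subgroup ${\rm Aut}_{\{x_1\}}(\mathbb{U}_\Delta^<)$—one uses ultrahomogeneity to build an automorphism fixing $x_1$ while moving another point of $F$—again contradicting maximality. Hence $V={\rm Aut}_{\{x\}}(\mathbb{U}_\Delta^<)$, and the same ultrahomogeneity argument, applied to any $x'\neq x$, shows that such an $x$ is unique.

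The routine ingredients—openness implying closedness, the conjugation formula $g\,{\rm Aut}_{\{x\}}\,g^{-1}={\rm Aut}_{\{g(x)\}}$, and the ultrahomogeneity realizations producing automorphisms that fix a prescribed point while moving another—I would dispatch quickly. I expect the only genuinely delicate step to be the reduction of the finite setwise-stabilized set $F$ to a singleton; the decisive point, and the feature that makes this lemma clean in the \emph{ordered} setting, is that the ambient linear order collapses the setwise stabilizer of $F$ onto its pointwise stabilizer, after which maximality rules out $|F|\geq 2$ immediately.
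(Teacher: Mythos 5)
Your proof is correct, and while it follows the same skeleton as the paper's (same two key lemmas, same use of maximality), the heart of the $(2)\Rightarrow(1)$ direction is handled by a genuinely different mechanism. The paper, after obtaining from Lemma~\ref{lemma:f_orbit} a point $x$ with $V\cdot x$ finite, argues directly that $V\subseteq {\rm Aut}_{\{x\}}(\mathbb{U}_\Delta^<)$: if some $g\in V$ moved $x$, then order-preservation would force the iterates $g^n(x)$ to be strictly monotone, making $V\cdot x$ infinite. This needs only one application of maximality. You instead observe that $V$ setwise stabilizes $F=V\cdot x$ and invoke the rigidity of finite linear orders to collapse the setwise stabilizer onto the pointwise stabilizer ${\rm Aut}_F(\mathbb{U}_\Delta^<)$; maximality then gives $V={\rm Aut}_F(\mathbb{U}_\Delta^<)$, and a second maximality argument (requiring an ultrahomogeneity realization of a ``twin'' of a point of $F$ over another point, which does exist --- one must check that a suitable distance $\delta=\min\{2d,\sup\Delta\}\in\Delta$ closes the triangle) shrinks $F$ to a singleton. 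Both arguments exploit the order in an essential way; the paper's monotone-iteration trick is more economical, while your stabilizer-collapse observation is more conceptual and would transfer to any setting with a definable linear order. Your uniqueness argument also differs: the paper derives it from Lemma~\ref{theorem:density} (if ${\rm Aut}_{\{x\}}={\rm Aut}_{\{y\}}$ with $x\neq y$, the group they generate is simultaneously dense and closed and proper), whereas you again build, via ultrahomogeneity, an automorphism fixing $x$ and moving $x'$; both are valid, though yours requires the same small amalgamation check as above, which you should spell out if you write this up in full.
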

\begin{proof}
           First suppose $V = {\rm Aut}_{\{x\}}(\mathbb{U}_{\Delta}^<)$. Assume $H$ is a closed subgroup of $\mbox{\rm Aut}(\mathbb{U}^<_\Delta)$ such that $V\subsetneq H\subsetneq \mbox{\rm Aut}(\mathbb{U}^<_\Delta)$.  Let $g \in H \setminus V$. Then $g(x) \neq x$ and ${\rm Aut}_{\{g(x)\}}(\mathbb{U}_{\Delta}^<) = g{\rm Aut}_{\{x\}}(\mathbb{U}_{\Delta}^<)g^{-1}=gVg^{-1} \leq H$. By Lemma~\ref{theorem:density}, $\langle {\rm Aut}_{\{g(x)\}}(\mathbb{U}_{\Delta}^<), {\rm Aut}_{\{x\}}(\mathbb{U}_{\Delta}^<) \rangle$ is dense in ${\rm Aut}(\mathbb{U}_{\Delta}^<)$. Since it is a subgroup of $H$ and $H$ is closed, we have $H= {\rm Aut}(\mathbb{U}_{\Delta}^<)$, contradicting the properness of $H$. 
     
           For the converse, assume (2) holds. By Lemma~\ref{lemma:f_orbit}, there exists an element $x \in \mathbb{U}_{\Delta}^<$ such that $V\cdot x$ is finite. We show that $V\subseteq \mbox{\rm Aut}_{\{x\}}(\mathbb{U}^<_\Delta)$. Otherwise, let $g \in V$ be such that $g(x) \neq x$. Then either $g(x)>x$ or $g(x)<x$. Since $g$ is order-preserving, we get that either $g^{n+1}(x) > g^n(x)$ for all natural numbers $n$, or $g^{n+1}(x)<g^n(x)$ for all natural numbers $n$. In either case, $V\cdot x$ is infinite. 
           
It is clear that $\mbox{\rm Aut}_{\{x\}}(\mathbb{U}^<_\Delta)\neq \mbox{\rm Aut}(\mathbb{U}^<_\Delta)$. Thus by the the maximality of $V$, we have
$V=\mbox{\rm Aut}_{\{x\}}(\mathbb{U}^<_\Delta)$.
     
           For the uniqueness, assume ${\rm Aut}_{\{x\}}(\mathbb{U}_{\Delta}^<) = {\rm Aut}_{\{y\}}(\mathbb{U}_{\Delta}^<)$ but $x \neq y$. Then by Lemma~\ref{theorem:density}, $\langle {\rm Aut}_{\{x\}}(\mathbb{U}_{\Delta}^<), {\rm Aut}_{\{y\}}(\mathbb{U}_{\Delta}^<) \rangle$ is dense. But $$\langle {\rm Aut}_{\{x\}}(\mathbb{U}_{\Delta}^<), {\rm Aut}_{\{y\}}(\mathbb{U}_{\Delta}^<) \rangle={\rm Aut}_{\{x\}}(\mathbb{U}_{\Delta}^<) = {\rm Aut}_{\{y\}}(\mathbb{U}_{\Delta}^<),$$ and thus they are both the whole group, which is impossible. 
        \end{proof}
     
     We are now ready to finish the proof of Theorem~\ref{thm:class}. Recall that the remaining direction is (i) implying (ii).

        Let $f\colon \mbox{\rm Aut}(\mathbb{U}^<_\Delta)\to\mbox{\rm Aut}(\mathbb{U}^<_\Lambda)$ be a topological group isomorphism. By Lemma~\ref{lem:maxopen}, for each $x \in \mathbb{U}_{\Delta}^<$, there is a unique $y \in \mathbb{U}_{\Lambda}^<$ such that $f({\rm Aut}_{\{x\}}(\mathbb{U}_{\Delta}^<))={\rm Aut}_{\{y\}}(\mathbb{U}_{\Lambda}^<)$. We write $y=g(x)$. The map $g$ is well-defined and bijective. 

        Note that for any $x,y,x',y'\in\mathbb{U}_{\Delta}^<$, ${\rm Aut}_{\{x,y\}}(\mathbb{U}_{\Delta}^<)$ and ${\rm Aut}_{\{x',y'\}}(\mathbb{U}_{\Delta}^<)$ are conjugate in $\mbox{\rm Aut}(\mathbb{U}^<_\Delta)$ if and only if $d(x,y)=d(x',y')$. Also note that
$$ {\rm Aut}_{\{x,y\}}(\mathbb{U}_{\Delta}^<)=\mbox{\rm Aut}_{\{x\}}(\mathbb{U}^<_\Delta)\cap \mbox{\rm Aut}_{\{y\}}(\mathbb{U}^<_\Delta) $$
and similarly for $\mbox{\rm Aut}_{\{x',y'\}}(\mathbb{U}^<_\Delta)$. Thus by the definition of $g$, we have that for any $x,y,x',y' \in \mathbb{U}_{\Delta}^<$,
$$ d(x,y)=d(x',y') \iff d(g(x),g(y)) = d(g(x'),g(y')).$$
        
Now for any $d \in \Delta$, we define $\theta(d) \in \Lambda$ by finding $x,y \in \mathbb{U}_{\Delta}^<$ such that $d(x,y)=d$, and then setting $\theta(d)=d(g(x),g(y))$. By the above observation, $\theta$ is well-defined and bijective. We show that $\theta$ witnesses the equivalence of $\Delta $ and $ \Lambda$. Suppose $(d_1,d_2,d_3)$ is a $\Delta$-triangle. There exist $x,y,z \in \mathbb{U}_{\Delta}^<$ such that the distances among them are exactly $(d_1,d_2,d_3)$. Then the distaces among $g(x),g(y),g(z)$ are $(\theta(d_1),\theta(d_2),\theta(d_3))$. Thus $(\theta(d_1),\theta(d_2),\theta(d_3))$ is a $\Lambda$-triangle. The converse also holds.

The proof of Theorem~\ref{thm:class} is complete.

\section{The Complexity of the Equivalence between Distance Value Sets\label{sec:CDVS}}

In this section we study the complexity of the equivalence (see Definition~\ref{def:CDVSequiv}) between countable distance value sets from the point of view of descriptive set theory. We first establish some notation.

\begin{definition}
        Let $\mathsf{CDVS}\subseteq \mathbb{R}^{\mathbb{N}}$ consist of all sequences $(d_i)_{i\in\mathbb{N}}$ of real numbers such that
\begin{enumerate}
\item[(a)] $d_i\geq 0$ for all $i$; the set $\{i\in\mathbb{N}\colon d_i=0\}$ is infinite; $d_i>0$ for some $i$;
\item[(b)] if $d_i, d_j>0$, then $d_i=d_j$ if and only if $i=j$;
\item[(c)] if $\sup_kd_k<\infty$, then $\sup_k d_k=d_{i}$ for some $i$;
\item[(d)] for any $i,j\geq 0$, if $d_i+d_j<\sup_kd_k$ then there is $\ell$ such that $d_{\ell}=d_i+d_j$.
\end{enumerate}
\end{definition}

Intuitively, $\mathsf{CDVS}$ consists of codes of all countable distance value sets, which are enumerated as sequences of real numbers without repetitions. We include zero entries in the codes to accommodate finite distance value sets. $\mathsf{CDVS}$ is a Borel subset of $\mathbb{R}^{\mathbb{N}}$, and hence it is a standard Borel space. We also use the following notation:
$$\begin{array}{lcl}
\mathsf{CDVS}_0&=&\left\{(d_i)_{i\in\mathbb{N}}\in \mathsf{CDVS}\colon \inf\{d_i>0\colon i\in\mathbb{N}\}=0 \right\}, \\
\mathsf{CDVS}_+&=&\left\{(d_i)_{i\in\mathbb{N}}\in \mathsf{CDVS}\setminus \mathsf{CDVS}_0\colon \sup_i d_i<\infty\right\},\\
\mathsf{CDVS}_\infty &=& \left\{(d_i)_{i\in\mathbb{N}}\in \mathsf{CDVS}\setminus \mathsf{CDVS}_0\colon \sup_i d_i=\infty\right\}. 
\end{array}$$
These sets form a partition of $\mathsf{CDVS}$ into Borel subsets, and in particular they are all standard Borel spaces. $\mathsf{CDVS}_0$ consists of codes of all countable distance value sets $\Delta$ with $\inf\Delta=0$. Similarly, $\mathsf{CDVS}_+$ codes countable distance value sets $\Delta$ with $\inf\Delta>0$ and $\sup\Delta<\infty$, and $\mathsf{CDVS}_\infty$ codes those $\Delta$ with $\inf\Delta>0$ and $\sup\Delta=\infty$. In our discussions below, we will not distinguish a countable distance value set $\Delta$ from a code of $\Delta$ as an element of $\mathsf{CDVS}$.

\begin{definition}\label{def:er} %Let $\sim$ be the equivalence relation on $\mathbb{R}^{\mathbb{N}}$ defined by
%$$ (c_i)_i\sim (d_i)_i\iff \exists g\in S_\infty\ \exists r\in (0,+\infty)\ \forall i\ d_{g(i)}=r\cdot c_i. $$
Let $\approx$ be the equivalence relation on $\mathbb{R}^{\mathbb{N}}$ defined by
$$\begin{array}{rcl} (c_i)_i\approx (d_i)_i&\iff& \exists g\in S_\infty\ [\forall i\ (c_i=0\longleftrightarrow d_{g(i)}=0) \mbox{ and } \\
& & \forall i, j, k\ (|c_j-c_k|\leq c_i\leq c_j+c_k\longleftrightarrow \\
& & |d_{g(j)}-d_{g(k)}|\leq d_{g(i)}\leq d_{g(j)}+d_{g(k)})].
\end{array}
$$
For any $A\subseteq \mathbb{R}^{\mathbb{N}}$, %let $\sim_A$ denote $\sim\upharpoonright (A\times A)$, and similarly, 
let $\approx_A$ denote $\approx\upharpoonright (A\times A)$.
\end{definition}

Thus $\approx_{\mathsf{CDVS}}$ is the equivalence between countable distance value sets. Note that $\mathsf{CDVS}_0$, $\mathsf{CDVS}_+$ and $\mathsf{CDVS}_\infty$ are all invariant under the equivalence relation $\approx$. 

In the rest of this section we will determine the exact complexity of $\approx_{\mathsf{CDVS}_+}$, $\approx_{\mathsf{CDVS}_\infty}$ and $\approx_{\mathsf{CDVS}}$ and give some upper and lower bounds for $\approx_{\mathsf{CDVS}_0}$ in the Borel reducibility hierarchy.

\subsection{The exact complexity of $\approx_{\mathsf{CDVS}_+}$ and $\approx_{\mathsf{CDVS}}$}

\begin{lemma}\label{lem:isomorphism} $\approx_{\mathsf{CDVS}}$ is Borel reducible to an isomorphism relation of countable structures. Consequently, $\approx_{\mathsf{CDVS}}$ is Borel reducible to $E^\infty_{S_\infty}$.
\end{lemma}

\begin{proof} To each $\Delta\in \mathsf{CDVS}$ we associate a countable structure $\mathcal{A}_\Delta=(\Delta, R^{\mathcal{A}})$, where $R^{\mathcal{A}}$ is a ternary relation defined as
$$ R^{\mathcal{A}}(d_1,d_2,d_3)\iff \mbox{$(d_1,d_2,d_3)$ is a $\Delta$-triangle.}$$
It is clear that for $\Delta, \Lambda\in \mathsf{CDVS}$, we have $\Delta\approx \Lambda$ if and only if $\mathcal{A}_\Delta\cong \mathcal{A}_{\Lambda}$. This gives a Borel reduction from $\approx_{\mathsf{CDVS}}$ to an isomorphism relation of countable structures.
\end{proof}

\begin{theorem}\label{thm:CDVS+} The isomorphism relation of all countable linear orders is Borel reducible to $\approx_{\mathsf{CDVS}_+}$. Consequently, $E^\infty_{S_\infty}$ is Borel reducible to $\approx_{\mathsf{CDVS}_+}$.
\end{theorem}

\begin{proof}
For a countable linear order $\mathcal{O}$, let $\psi(\mathcal{O})\subseteq (3,4)$  be an isomorphic copy of $\mathcal{O}$, and let 
$$\varphi(\mathcal{O})=\{2,4,6\}\cup\psi(\mathcal{O})\cup(2+\psi(\mathcal{O})). $$
Then $\varphi(\mathcal{O})\in \mathsf{CDVS}_+$. We verify that $\varphi$ is a Borel reduction from the isomorphism relation of all countable linear orders to $\approx_{\mathsf{CDVS}_+}$.

First suppose $\mathcal{O}_1$ and $\mathcal{O}_2$ are isomorphic countable linear orders. Then there is an order-preserving bijection $f$ between $\psi(\mathcal{O}_1)$ and $\psi(\mathcal{O}_2)$. For $x\in\varphi(\mathcal{O}_1)$, let 
$$F(x)=\left\{\begin{array}{ll} f(x),  & \mbox{ if $x\in\psi(\mathcal{O}_1)$,} \\
f(x-2)+2, & \mbox{ if $x\in 2+\psi(\mathcal{O}_1)$,} \\
x, & \mbox{ otherwise.}
\end{array}\right.
$$
It is straightforward to check that $F$ is a bijection between $\varphi(\mathcal{O}_1)$ and $\varphi(\mathcal{O}_2)$ witnessing their equivalence. Thus $\varphi(\mathcal{O}_1)\approx \varphi(\mathcal{O}_2)$.

Conversely, suppose $\varphi(\mathcal{O}_1)\approx \varphi(\mathcal{O}_2)$ and this equivalence is witnessed by a bijection $g$. Let $\Delta=\varphi(\mathcal{O}_1)$ and $\Lambda=\varphi(\mathcal{O}_2)$, and let $\mathcal{A}_\Delta=(\Delta, R^\Delta)$ and $\mathcal{A}_\Lambda=(\Lambda, R^\Lambda)$ be the countable structures defined in the proof of Lemma~\ref{lem:isomorphism}.  Then $g$ is an isomorphism witnessing $\mathcal{A}_\Delta\cong \mathcal{A}_\Lambda$. 

Now note that $2$ is definable in both $\mathcal{A}_\Delta$ and $\mathcal{A}_\Lambda$ by the formula $\exists y\neg R(x, x, y)$. Thus $g(2)=2$. 
Moreover, $\psi(\mathcal{O}_1)$ is definable in $\mathcal{A}_\Delta$ by the formula 
$$ x\neq 2\wedge R(2, 2, x) \wedge \exists y\neg R(2,x,y). $$
Similarly, $\psi(\mathcal{O}_2)$ is definable in $\mathcal{A}_\Lambda$ by the same formula. Thus $g$ sends $\psi(\mathcal{O}_1)$ to $\psi(\mathcal{O}_2)$. Finally, on $\psi(\mathcal{O}_1)$, the $x<y$ relation is definable by the formula
$$ \exists z\, [\neg R(2, x, z) \wedge R(2,y,z)]. $$
Similarly for $\psi(\mathcal{O}_2)$. Hence $g$ preserves the order $<$ as a bijection between $\psi(\mathcal{O}_1)$ and $\psi(\mathcal{O}_2)$. It follows that the linear orders $\mathcal{O}_1$ and $\mathcal{O}_2$ are isomorphic.

It is routine to check that $\varphi$ is Borel.
\end{proof}

\begin{corollary}\label{cor:main} Both $\approx_{\mathsf{CDVS}_+}$ and $\approx_{\mathsf{CDVS}}$ are Borel bireducible with $E^\infty_{S_\infty}$. In particular, they are analytic and non-Borel.
\end{corollary}

\begin{proof} This follows immediately from Lemma~\ref{lem:isomorphism} and Theorem~\ref{thm:CDVS+}, in view of the trivial fact that $\approx_{\mathsf{CDVS}_+}$ is Borel reducible to $\approx_{\mathsf{CDVS}}$ via the identity function.
\end{proof}

\subsection{The exact complexity of $\approx_{\mathsf{CDVS}_\infty}$}

\begin{theorem}\label{thm:CDVSinfty} Let $\Delta$ and $\Lambda$ be countable distance value sets with $\inf\Delta=\inf\Lambda=1$ and $\sup\Delta=\sup\Lambda=\infty$. Then $\Delta$ and $\Lambda$ are equivalent if and only if $\Delta=\Lambda$.
\end{theorem}

\begin{proof} The implication $\Leftarrow$ is obvious. We only show the $\Rightarrow$ direction. For this, let $f$ be a bijection between $\Delta$ and $\Lambda$ witnessing their equivalence. 

For a subset $I\subseteq \mathbb{R}$, let $\Delta_I=\Delta\cap I$ and $\Lambda_I=\Lambda\cap I$. Let $\mathcal{A}_\Delta=(\Delta, R^\Delta)$ and $\mathcal{A}_\Lambda=(\Lambda, R^\Lambda)$ be the countable structures defined in the proof of Lemma~\ref{lem:isomorphism}. Then $f$ is an isomorphism witnessing $\mathcal{A}_\Delta\cong \mathcal{A}_\Lambda$. 

First note that $\inf\Delta=1\in \Delta$ if and only if $\mathcal{A}_\Delta$ satisfies the sentence
$$ \exists x\, \forall y\, [ R(x, x, y)\rightarrow \forall z\, R(z, z, y)]. $$
Similarly for $\Lambda$. Thus $1\in \Delta$ if and only if $1\in \Lambda$. We consider two cases.

{\sc Case 1}: $1\in \Delta\cap \Lambda$. Since $\Delta$ and $\Lambda$ are distance value sets with $\sup\Delta=\inf\Delta=\infty$, we have that $k\in \Delta\cap \Lambda$ for all positive integers $k$. Note that $1$ is definable in $\mathcal{A}_\Delta$ by the formula
$$ \chi_1(x)=\forall y\, [ R(x, x, y)\rightarrow \forall z\, R(z, z, y)]. $$
Similarly for $\mathcal{A}_\Lambda$. Thus $f(1)=1$. Also, we have that $\Delta_{[1,2]}$ is definable in $\mathcal{A}_\Delta$ by the formula
$$ \chi_{[1,2]}(x)=\forall y\, R(y,y,x). $$
Similarly for $\Lambda_{[1,2]}$. Hence $f$ sends $\Delta_{[1,2]}$ to $\Lambda_{[1,2]}$. 

Now for $k\geq 2$, define inductively 
$$ \chi_{[1,k+1]}(x)=\exists y\, [\chi_{[1,k]}(y)\wedge R(1,y,x)]. $$
Then $\Delta_{[1,k+1]}$ is definable in $\mathcal{A}_\Delta$ by $\chi_{[1,k+1]}$, and similarly for $\Lambda_{[1,k+1]}$. It follows that $f$ sends $\Delta_{[1,k+1]}$ to $\Lambda_{[1,k+1]}$ for all $k\geq 1$.

Next we define, by induction on $k\in\mathbb{N}^+$ and $m\in\mathbb{N}$, a formula $\chi_{[1,1+\frac{k}{2^m}]}$ as follows.
$$\begin{array}{rcl} \chi_{[1, 1+\frac{k}{2^0}]}(x)&=&\chi_{[1, k+1]}(x) \mbox{ for all $k\geq 1$,} \\
\chi_{[1,1+\frac{k}{2^{m+1}}]}(x)&=& \forall y\, [R(x,x,y)\rightarrow \chi_{[1,1+\frac{k+2^m}{2^m}]}(y)].
\end{array}
$$
For any $k\in\mathbb{N}^+$ and $m\in\mathbb{N}$, the formula $\chi_{[1, 1+\frac{k}{2^m}]}$ defines $\Delta_{[1,1+\frac{k}{2^m}]}$ in $\mathcal{A}_\Delta$ and $\Lambda_{[1,1+\frac{k}{2^m}]}$ in $\mathcal{A}_\Lambda$. Hence $f$ sends $\Delta_{[1,1+\frac{k}{2^m}]}$ to $\Lambda_{[1,1+\frac{k}{2^m}]}$. This implies that for any dyadic rationals $1<a<b$, $f$ sends $\Delta_{[a,b]}$ to $\Lambda_{[a,b]}$. Hence $f$ must be the identity function, and so $\Delta=\Lambda$.

{\sc Case 2}: $1\not\in \Delta\cup\Lambda$. The formula $\chi_{[1,2]}$ now defines $\Delta_{(1,2]}$ in $\mathcal{A}_\Delta$ and $\Lambda_{(1,2]}$ in $\mathcal{A}_\Lambda$. Thus $f$ sends $\Delta_{(1,2]}$ to $\Lambda_{(1,2]}$. 

Let
$$ \varphi_2(x)=\exists y\, [\chi_{[1,2]}(y)\wedge\forall z\,(\chi_{[1,2]}(z)\rightarrow R(x,y,z))]\wedge \neg \chi_{[1,2]}(x) $$
and let $D_2$ be defined by $\varphi_2$ in $\mathcal{A}_\Delta$. Then $D_2\subseteq\Delta_{(2,3]}$. We claim that $D_2\neq \varnothing$.  To see this, let $a, b\in \Delta_{(1,2]}$ be such that $a<b$ and $2(a-1)<b-1$. Such $a, b$ exist since $\inf\Delta=1\not\in \Delta$.  Then $2a\in\Delta$ since $\Delta$ is a distance value set, and $\varphi_2[2a]$ holds in $\mathcal{A}_\Delta$ as witnessed by $b$. Thus $2a\in D_2$. A similar argument shows $\inf D_2=2$. It follows that $\Delta_{(2,3]}$ is definable in $\mathcal{A}_\Delta$ by the formula
$$ \chi_{(2,3]}(x)=\forall y\, \forall z\, [(\chi_{[1,2]}(y)\wedge \varphi_2(z))\rightarrow R(x,y,z)]\wedge \neg\chi_{[1,2]}(x) $$
and $\Delta_{(1,3]}$ is definable in $\mathcal{A}_\Delta$ by the formula
$$ \chi_{(1,3]}(x)=\chi_{[1,2]}(x)\vee \chi_{(2,3]}(x). $$
Similarly for $\mathcal{A}_\Lambda$. Therefore $f$ sends $\Delta_{(2,3]}$ to $\Lambda_{(2,3]}$ and sends $\Delta_{(1,3]}$ to $\Lambda_{(1,3]}$.

Now for $k\geq 2$, define inductively
$$\begin{array}{rcl} \varphi_{k+1}(x)&=&\exists y\, [\varphi_k(y) \wedge \forall z\, (\chi_{[1,2]}(z)\rightarrow R(x,y,z))]\wedge \neg\chi_{(1,k+1]}(x), \\
\chi_{(k+1,k+2]}(x)&=&\forall y\, \forall z\, [(\chi_{[1,2]}(y)\wedge \varphi_{k+1}(z))\rightarrow R(x,y,z)]\wedge \neg\chi_{(1,k+1]}(x), \\
\chi_{(1,k+2]}(x)&=& \chi_{(1,k+1]}(x)\vee \chi_{(k+1,k+2]}(x). 
\end{array} $$ 
Then for all $k\geq 1$, $\chi_{(1,k+2]}$ defines $\Delta_{(1,k+2]}$ in $\mathcal{A}_\Delta$ and $\Lambda_{(1,k+2]}$ in $\mathcal{A}_\Lambda$. Hence for all $k\geq 2$, $f$ sends $\Delta_{(1,k+1]}$ to $\Lambda_{(1,k+1]}$. 

The rest of the proof is similar to Case 1. We conclude that $f$ must be the identity function, and so $\Delta=\Lambda$.
\end{proof}

\begin{theorem} $\approx_{\mathsf{CDVS}_\infty}$ is Borel bireducible with $=^+$.
\end{theorem}

\begin{proof}
To each $\Delta\in \mathsf{CDVS}_\infty$ we associate a code for 
$$ \Delta^*=\displaystyle\frac{\Delta}{\inf\Delta}. $$
Then $\Delta^*\in \mathsf{CDVS}_\infty$ and $\inf\Delta^*=1$. By Theorem~\ref{thm:CDVSinfty}, the map $\Delta\mapsto \Delta^*$ is a Borel function witnessing that $\approx_{\mathsf{CDVS}_\infty}\,\leq_B\ =^+$.

Conversely, given any nonempty countable subset $A$ of the interval $(0,1)$, let $\Delta_A$ be the subsemigroup of $(\mathbb{R}^+,+)$ generated by $\{1\}\cup (1+A)$. Then $\Delta_A$ is a countable distance value set with $\inf \Delta_A= 1$ and $\sup\Delta_A=\infty$.  Thus $\Delta_A\in \mathsf{CDVS}_\infty$. Moreover, $\Delta_A\cap (1,2)=1+A$. By Theorem~\ref{thm:CDVSinfty}, if $A$ and $B$ are two nonempty countable subsets of $(0,1)$, then $\Delta_A$ and $\Delta_B$ are equivalent if and only if $\Delta_A=\Delta_B$, if and only if $1+A=1+B$, and if and only if $A=B$. Thus $A\mapsto \Delta_A$ is a Borel function witnessing that $=^+\,\leq_B\ \approx_{\mathsf{CDVS}_\infty}$.
\end{proof}

\subsection{The complexity of $\approx_{\mathsf{CDVS}_0}$\label{sec:4}}

In this subsection we give some upper and lower bounds for the complexity of $\approx_{\mathsf{CDVS}_0}$. We also discuss some related equivalence relations. 

 \begin{theorem}\label{thm:simapprox}
        Let $\Delta$ and $\Lambda$ be countable distance value sets with $\inf\Delta=\inf\Lambda=0$. Then the following are equivalent: 
        \begin{enumerate}
            \item[\rm (1)] $\Delta$ and $\Lambda$ are equivalent. 
            \item[\rm (2)] There is a positive real number $r$ such that $\Lambda = r \cdot \Delta$. 
        \end{enumerate} 

        Moreover, if $\Delta$ and $\Lambda$ are equivalent, then every witness of their equivalence has the form of multiplication by a positive real number. 
     \end{theorem}
     
\begin{proof} The implication (2)$\Rightarrow$(1) is obvious. We only prove (1)$\Rightarrow$(2). For this, let $f$ be a bijection between $\Delta$ and $\Lambda$ witnessing their equivalence. Let $\mathcal{A}_\Delta=(\Delta, R^\Delta)$ and $\mathcal{A}_\Lambda$ be the countable structures defined in the proof of Lemma~\ref{lem:isomorphism}. Then $f$ is an isomorphism witnessing $\mathcal{A}_\Delta\cong \mathcal{A}_\Lambda$. Since $\Delta$ is a distance value set and $\inf\Delta=0$, we have that $\Delta$ is dense in $(0, \sup\Delta)$. Similarly, $\Lambda$ is dense in $(0, \sup\Lambda)$.

Fix a sequence $(\delta_i)_{i\in\mathbb{N}}$ in $\Delta$ such that $\delta_{i+1} < {\delta_i}/{2}$ for all $i$. Such a sequence exists since $\mathcal{A}_\Delta$ satisfies the sentence $\forall x\,\exists y\,\neg R(y,y,x)$. Since $f$ is an isomorphism, we have $f(\delta_{i+1})<{f(\delta_i)}/{2}$ for all $i$. 

We claim that $f$ is continuous as a function from $\Delta\subseteq\mathbb{R}$ to $\Lambda\subseteq\mathbb{R}$. To see this,  
fix $x \in \Delta$. For any $\epsilon > 0$, find an $i$ such that $\delta_i < x$ and $f(\delta_i) < \epsilon$. For any $y \in \Delta$ with $|x - y| < \delta_i$, $(x,y,\delta_i)$ is a $\Delta$-triangle. Thus $(f(x), f(y),f(\delta_i))$ is a $\Lambda$-triangle. In particular, $|f(x)-f(y)|<f(\delta_i)<\epsilon$. The actually proves that for all positive $\delta$, $f$ is uniformly continuous on $\Delta_{>\delta}\triangleq \Delta\cap (\delta,+\infty)$. 

Next we show that $f$ is in fact linear. In the proof we use notation such as $\Delta_{>\delta}$, $\Delta_{<\delta}$, and $\Delta_{[a,b]}$ with obvious meanings.

     %  If $x,y, x+y \in\Delta$, then $f(x+y) = f(x)+f(y)$. 
   
To show that $f$ is linear, assume $x,y,x+y\in \Delta$. We claim that
\begin{enumerate}
\item[(a)] $f(\Delta_{[|x-y|, x+y]}) \subseteq \Lambda_{[|f(x)-f(y)|, f(x)+f(y)]}$; 
\item[(b)] $f(\Delta_{>x+y}) \subseteq \Lambda_{>f(x)+f(y)}  \mbox{ or } f(\Delta_{>x+y})\subseteq \Lambda_{<|f(x)-f(y)|}$;
\item[(c)] $f(\Delta_{<|x-y|}) \subseteq \Lambda_{>f(x)+f(y)} \mbox{ or } f(\Delta_{<|x-y|})\subseteq \Lambda_{<|f(x)-f(y)|}$. 
\end{enumerate}
Let $z\in \Delta$. (a) is equivalent to saying that if $(x,y,z)$ is a $\Delta$-triangle, then $(f(x), f(y), f(z))$ is a $\Lambda$-triangle, which is immediate from our assumption. Also immediate is that if $z > x+y$, then $(x,y,z)$ is not a $\Delta$-triangle. Thus $(f(x),f(y),f(z))$ is not a $\Lambda$-triangle, and so either $f(z) > f(x)+f(y)$ or $f(z) < |f(x)-f(y)|$. Similarly for $z <|x-y|$. Now, to prove (b), assume toward a contradiction that there are $z_1, z_2\in \Delta$ with $z_1, z_2 >x+y$ such that $f(z_1)<|f(x)-f(y)|$ and $f(z_2)>f(x)+f(y)$. Without loss of generality assume $z_1<z_2$ (the argument for the case $z_1>z_2$ is similar). By the continuity of $f$ and the density of both $\Delta$ and $\Lambda$, we may find $z_1=a_1,\dots, a_n=z_2$ in $\Delta$ such that $a_k>x+y$ and $|f(a_{k+1})-f(a_k)| < f(x)+f(y)-|f(x)-f(y)|$ for all $1\leq k\leq n$. Then there must be an $a_k$ with $|f(x)-f(y)| \leq f(a_k) \leq f(x)+f(y)$, contradicting $a_k>x+y$. This proves (b). A similar argument for $z_1, z_2 < |x-y|$ proves (c). 

        Since $f$ is a bijection from $\Delta$ to $\Lambda$, the claimed inclusions are actually equalities. By considering sufficiently small $\delta_i$, we see that $f(\Delta_{<|x-y|}) = \Lambda_{<|f(x)-f(y)|}$, and thus $f(\Delta_{>x+y}) = \Lambda_{>f(x)+f(y)}$. By the continuty of $f$ again, we have $f(x+y) =f(x)+f(y)$.   
   
Finally, we argue that for any $x,t \in \Delta$, we have $f(x)/x=f(t)/t$. This proves (2).

    For each $k > 0 $, find $0< \delta_k < 1/2^k$ by the continuity of $f$ such that for all $z\in \Delta$, $|t-z|<\delta_k$ implies $|f(t)-f(z)| < 1/k$. Now for each $k$, find positive rational number $n_k/m_k$ such that $0< t-(n_k/m_k)x < \delta_k/2$. Note that by the density of $\Delta$ and bijectivity of $f$, $\delta_k \to 0$ as $k \to \infty$. Thus $n_k/m_k \to t/x$ as $k \to \infty$. 

    Fix $k$. By the density of $\Delta$, find an increasing sequence $(t_j)_{j\in \mathbb{N}}$ with $t_j \in \Delta$ and $t_j \to x/m_k$ as $j \to\infty$. Then $m_kt_j \to x$ as $j \to \infty$, and $m_kt_j < x \in \Delta$. Thus $m_kt_j\in \Delta$ and $m_kf(t_j)=f(m_kt_j) \to f(x)$, which gives $f(t_j) \to f(x)/m_k$. On the other hand, $n_kt_j < (n_k/m_k)x < t \in \Delta$ and $n_kt_j \to (n_k/m_k)x$ as $j\to\infty$. Hence $n_kt_j\in \Delta$ and $f(n_kt_j)=n_kf(t_j) \to (n_k/m_k)f(x)$ as $j \to \infty$. 
    Thus we can find a $j$ such that $|n_kt_j-(n_k/m_k)x|<\delta_k/2$ and $|f(n_kt_j)-(n_k/m_k)f(x)|<1/k$. For this fixed $j$, we have $|t-n_kt_j|\leq |t-(n_k/m_k)x|+|n_kt_j-(n_k/m_k)x|<\delta_k$. Thus $|f(t)-f(n_kt_j)|<1/k$, and consequently $|f(t)-(n_k/m_k)f(x)|<2/k$. 

    Letting $k \to \infty$, we get $f(t)=(t/x)f(x)$ and thus $f(x)/x=f(t)/t$. 
\end{proof}

%Theorem 4.5 is a generalization of a result of Hion \cite{Hion} about Achimedean ordered groups.

     In view of Theorem~\ref{thm:simapprox}, we define the following notion.
\begin{definition}\label{def:er2} Let $\sim$ be the equivalence relation on $\mathbb{R}^{\mathbb{N}}$ defined by
$$ (c_i)_i\sim (d_i)_i\iff \exists g\in S_\infty\ \exists r\in (0,+\infty)\ \forall i\ d_{g(i)}=r\cdot c_i. $$
For any $A\subseteq \mathbb{R}^{\mathbb{N}}$, let $\sim_A$ denote $\sim\upharpoonright (A\times A)$.
\end{definition}

Thus Theorem~\ref{thm:simapprox} can be restated as
$\sim_{\mathsf{CDVS}_0}\,=\ \approx_{\mathsf{CDVS}_0}$. 

To obtain some upper and lower bounds for $\sim_{\mathsf{CDVS}_0}$ in the Borel reducibility hierarchy, we consider another equivalence relation defined and studied by Calderoni--Marker--Motto Ros--Shani \cite{CMMRS}. Let $\mathsf{ArGp}$ be the space of all countable Archimedean ordered groups, and let $\cong_{\mathsf{ArGp}}$ denote the isomorphism relation on $\mathsf{ArGp}$. It was proved in \cite{CMMRS} that
$$ =^+\ <_B\ \cong_{\mathsf{ArGp}}\ \leq_B\ \cong^*_{3,1}. $$
Here we prove the following facts.

\begin{lemma}\label{lem:argp} $\cong_{\mathsf{ArGp}}\ \leq_B\ \sim_{\mathsf{CDVS}_0}$.
\end{lemma}

\begin{proof} By a theorem of H\"{o}lder (see Theorem~2.1 of \cite{CMMRS}), every element of $\mathsf{ArGp}$ is isomorphic to a subgroup of $\mathbb{R}$ with the natural order of real numbers. Thus we may regard $\mathsf{ArGp}$ as the space of all countable subgroups of $\mathbb{R}$ with the natural order. Note that if $G\in \mathsf{ArGp}$ and $\inf [G\cap (0,+\infty)]\neq 0$, then $G$ is isomorphic to $\mathbb{Z}$. Let $\mathsf{ArGp}_0$ be the space of all elements of $\mathsf{ArGp}$ which are not isomorphic to $\mathbb{Z}$. Then $\mathsf{ArGp}\setminus\mathsf{ArGp}_0$ consists of exactly one isomorphism class of $\mathsf{ArGp}$, and it follows easily that $\cong_{\mathsf{ArGp}}\ \leq_B\  \cong_{\mathsf{ArGp}_0}$. 

For each $G\in\mathsf{ArGp}_0$, let $\Delta(G)=G\cap(0,+\infty)$. Then $\Delta(G)$ is a countable distance value set and $\inf \Delta(G)=0$. It is easy to see that $\Delta\colon \mathsf{ArGp}_0\to \mathsf{CDVS}_0$ is a Borel map. 

To see that $\Delta$ is a Borel reduction witnessing $\cong_{\mathsf{ArGp}}\ \leq_B\ \sim_{\mathsf{CDVS}_0}$, we use a result of Hion (see Lemma~2.2 of \cite{CMMRS}) which states that for any $G, H\in \mathsf{ArGp}$, $G\cong H$ if and only if there is a real number $r$ such that $r\cdot G=H$. It is then clear that $G\cong H$ if and only if there is a positive real number $r$ such that $r\cdot G=H$, if and only if there is a positive real number $r$ such that $r\cdot \Delta(G)=\Delta(H)$, and if and only if $\Delta(G)\sim \Delta(H)$.
\end{proof}

\begin{theorem}\label{thm:cong31} $\sim_{\mathsf{CDVS}_0}\ \leq_B\ \cong^*_{3,1}$.
\end{theorem}

\begin{proof}
    Similar to Proposition 3.12 of \cite{CMMRS}, using Corollary 6.4 of \cite{HKL}, this can be proved by showing that $\sim_{\mathsf{CDVS}_0}$ is Borel bireducible with an isomorphism relation and it is itself ${\bf\Sigma}^0_4$. 

We first show that $\sim$ is ${\bf\Sigma}^0_4$. For this, only note that for any $(c_i)_i, (d_i)_i\in \mathbb{R}^{\mathbb{N}}$, 
$$\begin{array}{rcl}
(c_i)_i\sim (d_i)_i&\iff& \exists i,j\ \left( c_i/d_j>0 \mbox{ and }\right. \\
& &  \forall n\ \exists m\ \left( c_n=d_mc_i/d_j \right) \mbox{ and }  \\
& &\left.\forall m\ \exists n\ \left( c_n=d_mc_i/d_j \right) \right). 
\end{array}$$

Next we show that $\sim_{\mathsf{CDVS}_0}$ is Borel bireducible with an isomorphism relation. For this we define a language $\mathcal{L}$ and an $\mathcal{L}_{\omega_1 \omega}$-theory $T$ so that the isomorphism relation on all countable models of $T$ is Borel bireducible with $\sim_{\mathsf{CDVS}_0}$.

    Let 
$$\mathcal{L}=\{0, c, \leq, +\}\cup\{R_q\colon q\in \mathbb{Q}^+\}$$
where 
\begin{itemize}
\item $0,c$ are constant symbols; 
\item $\leq$ is a binary relation symbol;
\item $+$ is a binary function symbol;
\item for each positive rational number $q\in \mathbb{Q}^+$, $R_q$ is a binary relation symbol.
\end{itemize}

In a countable distance value set $\Delta$ with $\inf\Delta=0$, we interpret $c$ to be $0$ if $\sup\Delta=+\infty$ and to be $\sup\Delta$ otherwise. For each $q\in \mathbb{Q}^+$, $R_q$ is interpreted as $R_q(x,y)$ if and only if $q<x/y$. Under these interpretations, $\Delta\cup\{0\}$ becomes a countable $\mathcal{L}$-structure.

 Let $T$ be an $\mathcal{L}_{\omega_1\omega}$-theory describing the following properties of a countable $\mathcal{L}$-structure $\mathcal{M}$ (for notational simplicity, we omit the superscripts $\mathcal{M}$ in the interpretations of the symbols of $\mathcal{L}$ in $\mathcal{M}$):
    \begin{enumerate}
        \item For all $q \in \mathbb{Q}^+$ and $x \in |\mathcal{M}|$, $\lnot R_q(x,0)$ and $\lnot R_q(0,x)$. 
        \item For all $x,y\in |\mathcal{M}|\setminus\{0\}$, the set of $q\in \mathbb{Q}^+$ with $R_q(x,y)$ is nonempty and forms a Dedekind cut of positive rational numbers, i.e., it is not the whole positive rational numbers, is closed downward and has no maximum. Moreover, the real number defined by the Dedekind cut $\{q \in \mathbb{Q}^+\colon R_q(x,x)\}$ is $1$. 
        \item For all $x,x',y \in |\mathcal{M}|\setminus\{0\}$, if $x\neq x'$, then the Dedekind cuts defined by $x,y$ and $x',y$ are different. 
        \item For all $x,y,z \in |\mathcal{M}|\setminus\{0\}$ and $p,q \in \mathbb{Q}^+$, if $R_p(x,y)$ and $R_q(y,z)$, then $R_{pq}(x,z)$, and if $\lnot R_p(x,y)$ and $\lnot R_q(y,z)$, then $\lnot R_{pq}(x,z)$. 
        \item $\leq$ is a linear ordering of $|\mathcal{M}|$,  $0$ is the $\leq$-least element, and for all $x,x'\in |\mathcal{M}|\setminus\{0\}$, $x \leq x'$ if and only if $x = x'$ or $R_1(x',x)$. 
        \item For all $x,x',y \in |\mathcal{M}|\setminus\{0\}$ and $q \in \mathbb{Q}^+$, $R_q(x+x',y)$ if and only if there are $q_1,q_2\in\mathbb{Q}^+$ with $q_1+q_2=q$ such that $R_{q_1}(x,y)$ and $R_{q_2}(x',y)$. 
        \item For all $x \in |\mathcal{M}|\setminus\{0\}$ and $q \in \mathbb{Q}^+$, there is $x' \in |\mathcal{M}|\setminus\{0\}$ such that $\lnot R_q(x',x)$. 
    \end{enumerate}
Let $\mbox{Mod}(T)$ denote the Borel class of all countable models of $T$. Let $\cong_T$ denote the isomorphism relation on $\mbox{Mod}(T)$. It remains to verify that both $\sim_{\mathsf{CDVS}_0}\ \leq_B\ \cong_T$ and $\cong_T\ \leq_B\  \sim_{\mathsf{CDVS}_0}$ hold. 

Let $\Delta$ be a countable distance value set with $\inf\Delta=0$. We define a countable $\mathcal{L}$-structure $\mathcal{M}_\Delta$. Let $\mathbb{R}^+$ denote the semigroup of positive real numbers. Let $\bar{\Delta}\subseteq \mathbb{R}^+$ be the sub-semigroup of $\mathbb{R}^+$ generated by $\Delta$. Then either $\Delta = \bar{\Delta}$ or $\Delta = \bar{\Delta} \cap (0,{\rm sup}\Delta]$. 
Let $|\mathcal{M}_\Delta| = \{0\} \cup \bar{\Delta}$. Define $0, \leq, +$ in the obvious way. For $q \in \mathbb{Q}^+$ and $x,y \in |\mathcal{M}_\Delta|$, if  either $x=0$ or $y=0$, then $\lnot R_q(x,y)$; otherwise, define $R_q(x,y)$ if and only if $q < x/y$. Finally, if $\sup\Delta=+\infty$, set $c = 0$; otherwise, set $c = {\rm sup}\Delta$. Now it is easy to verify that $\mathcal{M}_\Delta$ is a model of $T$. Note that clause (7) holds since $\Delta$ is dense in $(0, \sup\Delta)$.  It is also clear that the map $\Delta \mapsto \mathcal{M}_\Delta$ gives a Borel reduction from $\sim_{\mathsf{CDVS}_0}$ to $\cong_T$.  

    Now given a countable model $\mathcal{M} $ of $T$, we define a map $f:\mathcal{M} \to [0,+\infty)$ as follows. We first choose a non-zero element $y_0\in \mathcal{M}$ canonically. Then define $f(0)=0$, and for $x\neq 0$, let $f(x)$ be the positive real number defined by the Dedekind cut $\{q\in \mathbb{Q}^+ \colon R_q(x,y_0)\}$. By clauses (2) and (3), $f$ is an injection. Let $\Delta_{\mathcal{M}}$ be $f(\mathcal{M}) \setminus \{0\}$ if $c=0$ and $f(\mathcal{M}) \cap (0,f(c)]$ otherwise. 

By clause (7), we have $\inf\Delta_{\mathcal{M}}=0$. Thus to verify that $\Delta_{\mathcal{M}}$ is a countable distance value set, it suffices to check that $f(\mathcal{M})$ is closed under summation. For this just note that if $x,y \in \mathcal{M}$ are nonzero, then
    \begin{align*}
        q<f(x+y)&\iff R_q(x+y, y_0) \\& \iff \exists q_1,q_2 \ q_1+q_2=q \land R_{q_1}(x, y_0)\land R_{q_2}(y, y_0) \\& \iff \exists q_1,q_2 \ q_1+q_2=q \land q_1< f(x)\land q_2<f(y) \\& \iff q <f(x)+f(y).
    \end{align*}
    Thus we have $f(x+y)=f(x)+f(y)$, and so $f(\mathcal{M})$ is closed under summation. 

 Finally, we verify that the map $\mathcal{M}\mapsto \Delta_{\mathcal{M}}$ is a Borel reduction from $\cong_T$ to $\sim_{\mathsf{CDVS}_0}$.  It is clear that the map is Borel. For the reduction, we first claim that for nonzero $x,y \in \mathcal{M}$, $R_q(x,y)$ if and only if $q<f(x)/f(y)$. Suppose $R_q(x,y)$. There is $q'>q$ with $R_{q'}(x,y)$.  Let $p<q'f(y)$. Then $p/q' <f(y)$ and thus $R_{p/q'}(y,y_0)$. Since $R_{q'}(x,y)$, by clause (4), we have $R_p(x,y_0)$ and thus $p<f(x)$. This shows $q'\leq f(x)/f(y)$ and thus $q <f(x)/f(y)$. Similarly we can prove that if $\lnot R_q(x,y)$, then $q \geq f(x)/f(y)$. This finishes the proof of the claim. 

Now, given countable models $\mathcal{M}_1$ and $\mathcal{M}_2$ of $T$, we denote the functions in the definitions of $\Delta_{\mathcal{M}_1}$ and $\Delta_{\mathcal{M}_2}$ by $f_1$ and $f_2$, respectively. Suppose $g$ is an isomorphism from $\mathcal{M}_1$ to $\mathcal{M}_2$. 
    Let $\tilde{g}=f_2gf_1^{-1}$. Then for any $x,y \in f_1(\mathcal{M}_1)$ and any $q\in\mathbb{Q}^+$, 
$$ \begin{array}{rcl} q<x/y &\iff& R_q(f_1^{-1}(x),f_1^{-1}(y)) \\
&\iff& R_q(gf_1^{-1}(x),gf_1^{-1}(y)) \iff q<\tilde{g}(x)/\tilde{g}(y).
\end{array}$$ 
This means $\tilde{g}(x)/\tilde{g}(y)=x/y$ and thus there is a positive real number $r$ such that $\tilde{g}(x)=r x$ for all $x\in f_1(\mathcal{M}_1)$. Note that $f_2(c^{\mathcal{M}_2})=\tilde{g}(f_1(c^{\mathcal{M}_1}))=r f_1(c^{\mathcal{M}_1})$. Thus either 
$$\Delta_{\mathcal{M}_2}=f_2(\mathcal{M}_2)=r\cdot f_1(\mathcal{M}_1)=r\cdot \Delta_{\mathcal{M}_1}$$ or
$$\begin{array}{rcl}\Delta_{\mathcal{M}_2}&=&f_2(\mathcal{M}_2)\cap (0,f_2(c^{\mathcal{M}_2})]\\
&=&r\cdot \left\{f_1(\mathcal{M}_1)\cap (0,f_1(c^{\mathcal{M}_1})]\right\}=r\cdot \Delta_{\mathcal{M}_1}.
\end{array}$$ 
In either case, $\Delta_{\mathcal{M}_1}\sim\Delta_{\mathcal{M}_2}$. Conversely, suppose there is a positive real number $r$ such that $\Delta_{\mathcal{M}_2}=r\cdot\Delta_{\mathcal{M}_1}$. Then $f_2(\mathcal{M}_2)=r\cdot f_1(\mathcal{M}_1)$ and $f_2(c^{\mathcal{M}_2})=r f_1(c^{\mathcal{M}_1})$. Let $g=f_2^{-1}r f_1\cup\{(0,0)\}$. Then a similar argument shows that $g$ is an isomorphism from $\mathcal{M}_1$ to $\mathcal{M}_2$. 
\end{proof}

\begin{corollary} $=^+\ <_B\ \approx_{\mathsf{CDVS}_0}\ <_B\ =^{++}$.
\end{corollary}

\begin{proof} Combining with the results of \cite{CMMRS} and Theorem~\ref{thm:simapprox}, Lemma~\ref{lem:argp} implies $=^+\ <_B\ \approx_{\mathsf{CDVS}_0}$ and Theorem~\ref{thm:cong31} implies $\approx_{\mathsf{CDVS}_0}\ <_B\ =^{++}$.
\end{proof}

We also consider a special kind of countable distance value sets for which the equivalence is more explicit.

     \begin{definition}
        Let $\alpha$ be a positve irrational number. Let 
$$\Delta_\alpha = \{p\alpha + q: p,q \in \mathbb{Q}\} \cap (0,+\infty). $$ 
Define an action of $GL_2(\mathbb{Q})$ on the space of irrational numbers by the linear fractional transformation: 
        \begin{equation*}
            \begin{pmatrix}
                a & b \\
                c & d
            \end{pmatrix} \cdot \alpha = \frac{a\alpha+b}{c\alpha+d}. 
 \end{equation*} 
We denote the equivalence relation induced by this action by $E_{GL_2(\mathbb{Q})}^{IR}$.        

 \end{definition}

For any positive irrational number $\alpha$, $\Delta_\alpha$ is a countable distance value set with $\inf\Delta_\alpha=0$. The equivalence between distance value sets of the form $\Delta_\alpha$ can be characterized by $E_{GL_2(\mathbb{Q})}^{IR}$, as the following theorem shows. 
     \begin{theorem}\label{thm:Deltaalpha}
        Let $\alpha, \beta$ be positive irrational numbers. Then the following are equivalent: 
        \begin{enumerate}
            \item[\rm (1)] $\Delta_\alpha$ and $\Delta_\beta$ are equivalent. 
            \item[\rm (2)] $\alpha$ and $\beta$ are $E_{GL_2(\mathbb{Q})}^{IR}$-equivalent. 
        \end{enumerate}
     \end{theorem}
     \begin{proof}
        For the implication of (2) from (1), let $r \in \mathbb{R}^+$ be such that the function defined by $f(t) = r\cdot t$ witnesses the equivalence of $\Delta_\alpha$ and $\Delta_\beta$. Then 
$$\beta = \frac{\beta}{1}=\frac{f(\beta)}{f(1)} = \frac{a\alpha + b}{c\alpha + d},$$ where $a,b,c,d \in \mathbb{Q}$. Note that the matrix formed by $a,b,c,d$ is invertible; otherwise there is an $s \in \mathbb{Q}$ such that $a = sc$ and $b=sd$, and then $\beta = s $, contradicting that $\beta$ is irrational. 
        
        For the other implication, suppose $\alpha =({a\beta + b})/({c\beta + d})$, where $a, b, c, d\in\mathbb{Q}$. We can assume $c\beta + d >0$. It is easy to verify that the function $f(t) = (c\beta + d)t$ is a triangle-preserving bijection from $\Delta_\alpha$ to $\Delta_\beta$. 
     \end{proof}

We note the following fact.

\begin{lemma}\label{lem:E0lowerbound} $E_0\leq_B E_{GL_2(\mathbb{Q})}^{IR}\leq_B E_\infty$.
\end{lemma}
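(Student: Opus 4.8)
The plan is to establish the two reductions separately; the right-hand one is routine and the left-hand one carries all the content. For the upper bound $E_{GL_2(\mathbb{Q})}^{IR}\leq_B E_\infty$, I would first observe that $GL_2(\mathbb{Q})$ is a countable group and that its linear fractional action on the standard Borel space of irrational numbers is Borel: the graph of each $M\in GL_2(\mathbb{Q})$ is the graph of a continuous function (the denominator $c\alpha+d$ never vanishes on irrationals), and $E_{GL_2(\mathbb{Q})}^{IR}$ is the countable union of these graphs. Hence $E_{GL_2(\mathbb{Q})}^{IR}$ is a countable Borel equivalence relation, and the reduction to $E_\infty$ is immediate from the universality of $E_\infty$ among countable Borel equivalence relations (Dougherty--Jackson--Kechris; see \cite{GaoBook}).

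For the lower bound $E_0\leq_B E_{GL_2(\mathbb{Q})}^{IR}$, the strategy is to invoke the Glimm--Effros dichotomy of Harrington--Kechris--Louveau (see \cite{GaoBook}): a Borel equivalence relation $E$ satisfies $E_0\leq_B E$ unless $E$ is smooth. Thus it suffices to prove that $E:=E_{GL_2(\mathbb{Q})}^{IR}$ is \emph{not} smooth, which I would do by exhibiting an atomless Borel probability measure $\mu$ on the irrationals for which $E$ is ergodic. The key observation is that for each $q\in\mathbb{Q}$ the translation $\alpha\mapsto\alpha+q$ is the linear fractional transformation given by $\begin{pmatrix}1 & q\\ 0 & 1\end{pmatrix}\in GL_2(\mathbb{Q})$, so the Vitali relation $E_v$ (cosets of $\mathbb{Q}$) restricted to the irrationals is contained in $E$. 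Consequently every $E$-invariant Borel set is $\mathbb{Q}$-translation invariant. Since the translation action of the dense subgroup $\mathbb{Q}$ on $\mathbb{R}$ is ergodic for Lebesgue measure (a Lebesgue density argument shows that any $\mathbb{Q}$-invariant Borel set is null or has null complement), I fix any atomless Borel probability measure $\mu$ on the irrationals equivalent to Lebesgue, e.g. a normalized Gaussian restricted to $\mathbb{R}\setminus\mathbb{Q}$; equivalence of measure classes then transfers ergodicity, so every $E$-invariant Borel set is $\mu$-null or $\mu$-conull.

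With $\mu$ in hand, the non-smoothness follows by a standard argument. Suppose toward a contradiction that $E$ is smooth, witnessed by a Borel map $f$ reducing $E$ to equality on a standard Borel space $Y$. Then $f$ is $E$-invariant, so each preimage $f^{-1}(B)$ is $E$-invariant and hence has $\mu$-measure $0$ or $1$; therefore the pushforward $f_*\mu$ is a $\{0,1\}$-valued Borel probability measure on $Y$, i.e. a point mass at some $y_0$, giving $\mu(f^{-1}(y_0))=1$. But $f^{-1}(y_0)$ is a single $E$-class, which is countable because $GL_2(\mathbb{Q})$ is countable, and hence $\mu$-null as $\mu$ is atomless --- a contradiction. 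Thus $E$ is not smooth, and the dichotomy yields $E_0\leq_B E$.

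The main obstacle is precisely this non-smoothness step. It is tempting to argue merely from the containment $E_v\subseteq E$, but containment of a non-smooth subrelation does not by itself force the larger relation to be non-smooth (a single-class relation is smooth yet contains $E_0$). The ergodic-measure argument is exactly what upgrades ``$E$ contains a non-smooth relation'' to ``$E$ is itself non-smooth,'' so the delicate points are verifying the ergodicity of the $\mathbb{Q}$-translation action and transferring it to an atomless probability measure; everything else is formal.
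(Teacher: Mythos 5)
Your proof is correct, but it takes a genuinely different route from the paper's. For the upper bound you and the paper do the same thing: $E_{GL_2(\mathbb{Q})}^{IR}$ is the orbit equivalence relation of a Borel action of a countable group, hence a countable Borel equivalence relation, hence $\leq_B E_\infty$ by universality. For the lower bound, the paper observes that every orbit of the action is dense (rational translations alone already give dense orbits) and, since orbits are countable and therefore meager in the perfect Polish space of irrationals, invokes the Becker--Kechris generic ergodicity theorem (Theorem~6.2.1 of \cite{GaoBook}): a continuous Polish group action with all orbits dense and meager admits a continuous embedding of $E_0$. You instead go through measure theory: you isolate the Vitali subrelation via the matrices $\begin{pmatrix}1 & q\\ 0 & 1\end{pmatrix}$, use the Lebesgue density theorem to get ergodicity of $\mathbb{Q}$-translations, transfer this to an atomless probability measure on the irrationals, rule out smoothness by the point-mass/countable-class argument, and then invoke the Harrington--Kechris--Louveau dichotomy to convert non-smoothness into $E_0\leq_B E$. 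Both arguments hinge on the same elementary observation --- that the rational translations sit inside $GL_2(\mathbb{Q})$ --- and both then outsource the real work to a black-box theorem; the paper's category route is shorter because Becker--Kechris produces the embedding of $E_0$ directly from orbit density, whereas your measure route needs the (deeper) HKL dichotomy but yields the extra information that $E_{GL_2(\mathbb{Q})}^{IR}$ is ergodic with respect to an atomless measure, a measure-theoretic strengthening of non-smoothness that the category argument does not give. Your closing remark is also well taken: mere containment of the non-smooth Vitali relation proves nothing by itself (the indiscrete relation is smooth and contains everything), and it is precisely the invariance-plus-ergodicity step, or in the paper's version the density-of-orbits step, that does the work.
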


\begin{proof} $E_{GL_2(\mathbb{Q})}^{IR}$ is induced by a continuous action of a countable group on a Polish space. Thus $E_{GL_2(\mathbb{Q})}^{IR}\leq_B E_\infty$. Note that every orbit is dense for this action. It follows from a theorem of Becker--Kechris (see Theorem~6.2.1 of \cite{GaoBook}) that $E_0\leq_B E_{GL_2(\mathbb{Q})}^{IR}$.
\end{proof}

%This gives $E_0$ as a lower bound for $\cong_{\mathsf{EA}}$, which is an improvement of Corollary~\ref{thm:mainaut}.

\section{Isomorphism of Extremely Amenable Groups in the Framework of Descriptive Set Theory\label{sec:6}}

Since $\mbox{\rm Aut}(\mathbb{U}^<_\Delta)$ are Polish groups that are (isomorphic to) closed subgroups of $S_\infty$, our results in this paper fit  very well in the KNT research program (reviewed in Subsection~\ref{subsec:er}). In this section we focus on the class of all closed subgroups of $S_\infty$ that are extremely amenable. We denote this class by $\mathsf{EA}$ and the topological isomorphism relation on $\mathsf{EA}$ by $\cong_{\mathsf{EA}}$. The following theorem shows that $\mathsf{EA}$ is a Borel subset of $\mathcal{F}(S_\infty)$.

     \begin{theorem}\label{thm:EA}
        The class $\mathsf{EA}$ of extremely amenable closed subgroups of $S_\infty$ is Borel. Consequently, $\cong_{\mathsf{EA}}$ is an analytic equivalence relation.
     \end{theorem}
     \begin{proof}
       Given any closed subgroup $G$ of $S_\infty$, by a well-known construction (see Theorem 4.1.4 of \cite{Hodges} and Section 2 of \cite{KPT}), we define a {\em canonical structure} $\mathcal{A}_G$ so that $\mbox{\rm Aut}(\mathcal{A}_G)=G$. To do this, consider the diagonal action of $G$ on $\mathbb{N}^n$ for all postive integers $n\geq 1$, which is defined by
$$ g\cdot (a_1,\dots, a_n)=(g(a_1),\dots, g(a_n)). $$
For each $n\geq 1$, let $\mathcal{O}_1^n, \mathcal{O}_2^n, \dots$ enumerate the set
$$ \left\{G\cdot (a_1,\dots, a_n)\colon (a_1,\dots, a_n)\in \mathbb{N}^n\right\}, $$
which is the set of all orbits of $G$ for the diagonal action on $\mathbb{N}^n$. Consider the language $\mathcal{L}$ consisting of relation symbols $R_{n,i}$ for $n, i\geq 1$. Each $R_{n,i}$ is an $n$-ary relation symbol. The structure $\mathcal{A}_G$ is an $\mathcal{L}$-structure whose universe is $\mathbb{N}$, with $R_{n,i}^{\mathcal{A}_G}=\mathcal{O}_i^n$. Then it is easy to check that $\mbox{\rm Aut}(\mathcal{A}_G)=G$. It is also straightforward to see that $\mathcal{A}_G$ is a Fra\"iss\'e structure, i.e., it is locally finite, countably infinite and ultrahomogeneous.

We note that the collection of all $\mathcal{L}$-structures with the universe $\mathbb{N}$ forms a Polish space $X_{\mathcal{L}}=\prod_{n,i\geq 1} X_{n,i}$, where each $X_{n,i}$ is a copy of the Cantor space $2^{\mathbb{N}^n}$. The map $G\mapsto \mathcal{A}_G$ from $\mathcal{F}(S_\infty)$ to $X_{\mathcal{L}}$ is Borel. In fact, a subbasic open set in $X_{\mathcal{L}}$ is determined by a tuple $\bar{a}=(a_1,\dots, a_n)$ being assigned value $0$ or value $1$. If $U$ is a basic open set in $X_{\mathcal{L}}$, then whether $\mathcal{A}_G\in U$ is determined by finitely many requirements of form $\exists g\in G\ g(\bar{a})=\bar{b}$ and its negation, where $\bar{a}, \bar{b}$ are tuples of the same length. Thus $\{G\in \mathcal{F}(S_\infty)\colon \mathcal{A}_G\in U\}$ is clearly a Borel set.

Let $\mathcal{K}$ be the age of $\mathcal{A}_G$. Then $\mathcal{A}_G=\mbox{Flim}\,\mathcal{K}$. By Theorem~\ref{thm:KPTstrong}, $G=\mbox{\rm Aut}(\mathcal{A}_G)=\mbox{\rm Aut}(\mbox{Flim}\,\mathcal{K})$ is extremely amenable if and only if $\mathcal{K}$ has the Ramsey property and every element of $\mathcal{K}$ is rigid. Thus to prove $\mathsf{EA}$ is Borel, it suffices to show both the following:
\begin{enumerate}
\item[(i)] the collection of $\mathcal{L}$-structures $\mathcal{A}\in X_{\mathcal{L}}$ where the age of $\mathcal{A}$ satisfies the Ramsey property is a Borel subset of $X_{\mathcal{L}}$;
\item[(ii)] the collection of $\mathcal{L}$-structures $\mathcal{A}\in X_{\mathcal{L}}$ where every finite substructure is rigid is a Borel subset of $X_{\mathcal{L}}$.
\end{enumerate}

Both (i) and (ii) refer to finite subsets of $\mathcal{A}$ (whose universe is $\mathbb{N}$). Thus, despite the tediousness of the statements of the Ramsey property and the rigidity, they can be expressed as Borel statements in the topology of $X_{\mathcal{L}}$. This completes the proof.
     \end{proof}
     
 After the completion of an earlier version of this paper, we learned from Aleksander Iwanow that Theorem~\ref{thm:EA} had been proved by Iwanow--Majcher-Iwanow in response to a question asked by Nies in the Logic Blog in 2020. The proof is included in the manuscript \cite{Nies} and is different from the one above.

The following theorem is a corollary of our results about countable distance value sets in the preceding section (in particular, Corollary~\ref{cor:main}).

     \begin{theorem}\label{reductions}
        $E^\infty_{S_\infty}$ is Borel reducible to $\cong_{\mathsf{EA}}$. In particular, $\cong_{\mathsf{EA}}$ is not Borel.
     \end{theorem}
     \begin{proof} 
The reduction $\approx_{\mathsf{CDVS}}\, \leq_B \,\cong_{\mathsf{EA}}$ is witnessed by the map $\Delta\mapsto \mbox{\rm Aut}(\mathbb{U}^<_\Delta)$ and guaranteed by Theorem~\ref{thm:class} and Corollary~\ref{cor:main}. We only need to show that the map from a distance value set $\Delta$ to ${\rm Aut}(\mathbb{U}_\Delta^<)$ is Borel. This is done in two steps. In the first step, we describe a Borel way to enumerate the class $\mathcal{K}^<_\Delta$, the class of all finite ordered $\Delta$-metric spaces, from a given sequence $(d_i)_i$ coding $\Delta$. For this, we fix a countable language $\mathcal{L}$ consisting of countably many binary relation symbols, i.e., $\mathcal{L}=\{R_i\}_{i\in \mathbb{N}} \cup \{<\}$. Enumerate those finite $\mathcal{L}$-structures $\mathcal{A}$ such that 
\begin{enumerate}
\item[(i)] for any distinct $x,y\in|\mathcal{A}|$, there is exactly one $i$ such that $d_i>0$ and $R^{\mathcal{A}}_i(x,y)$ holds;
\item[(ii)] for any $x,y,z\in|\mathcal{A}|$ and $i,j,k\in\mathbb{N}$, if $R^{\mathcal{A}}_i(x,y)$, $R^{\mathcal{A}}_j(y,z)$ and $R^{\mathcal{A}}_k(x,z)$, then $|d_j-d_k|\leq d_i\leq d_j+d_k$;
\item[(iii)] $<^{\mathcal{A}}$ is a linear order.
\end{enumerate}
It is clear that such structures correspond to finite ordered $\Delta$-metric spaces. 

Now, given an enumeration of $\mathcal{K}^<_\Delta$, one can construct the Fra\"iss\'e limit in a Borel way. This follows from the existence proof of Fra\"iss\'e's theorem (see, e.g. Lemma~7.1.6 of Hodges \cite{Hodges}).  Since the Fra\"iss\'e structure is ultrahomogeneous, its automorphism group intersects a basic open set $N_{\bar{a},\bar{b}}$ if and only if the substructures $\bar{a}$ and $\bar{b}$ are isomorphic. Thus the map from the Fra\"iss\'e structures to their automorphism groups is Borel.  
\end{proof}

\begin{corollary} There are continuum many pairwise non-isomorphic Polish groups that are extremely amenable. There are continuum many pairwise non-isomorphism separable metrizable groups what are not Polish and are extremely amenable.
\end{corollary}

\begin{proof} For both results, note that
$$ =_{\mathbb{R}}\ <_B\,E_0\ \leq_B  E^{IR}_{GL_2(\mathbb{Q})}\ \leq_B\ \approx_{\mathsf{CDVS}_0} $$
by Lemma~\ref{lem:E0lowerbound} and Theorem~\ref{thm:Deltaalpha}. Thus there are continuum many pairwise $\approx_{\mathsf{CDVS}_0}$-inequivalent elements. Now the first conclusion follows from Theorem~\ref{thm:class}, and the second conclusion follows from Theorems~\ref{thm:remain2} and \ref{thm:main1res} (1). 
\end{proof}

We mention some open problems. 

Let us first mention another class of automorphism groups known to be extremely amenable. Recall that an {\em ultrametric} is a metric satisfying $d(x,z) \leq \max\{d(x,y), d(y,z)\}$ for any $x,y,z$. An ordering $<$ on an ultrametric space $X$ is {\em convex} if for any $x\in X$ and $s > 0$, the ball $B(x,s)=\{y \in X: d(x,y) \leq s\}$ is a convex set in the ordering $<$, i.e. if $y_1<z<y_2$ and $y_1,y_2\in B(x,s)$, then $z\in B(x,s)$. An ultrametric space with a convex ordering is called a {\em convexly-ordered ultrametric space}.

     \begin{theorem}[Nguyen Van Th\'e \cite{NVT2}]
        Let $S$ be a countable set of positive real numbers. Then the class of all finite convexly-ordered ultrametric spaces with nonzero distances all in $S$ is a Fra\"iss\'e class with the Ramsey property. 
     \end{theorem}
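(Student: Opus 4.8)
The plan is to first verify that the class in the theorem, which I denote $\mathcal{U}_S$, is a Fra\"iss\'e class, and then to establish the Ramsey property, which is where essentially all the difficulty lies. The organizing principle throughout is the standard correspondence between finite ultrametric spaces and finite trees: in any ultrametric space the relation $d(x,y)\le s$ is an equivalence relation for each $s$, and as $s$ ranges over $S$ these equivalence relations are nested, so the closed balls form a laminar family and hence a finite rooted tree whose leaves are the points, the distance between two points being read off from the level of their meet. The convexity of the ordering is precisely the statement that every ball, i.e.\ every node of this tree, is an interval; equivalently, the children of each node are linearly ordered and the leaves inherit the induced lexicographic order. Thus an object of $\mathcal{U}_S$ is the same thing as a finite rooted tree with $S$-labelled levels and linearly ordered sibling sets.

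With this picture the routine Fra\"iss\'e conditions are immediate: there are only countably many isomorphism types, there are arbitrarily large ones, and a substructure corresponds to restricting to a subset of leaves, which again yields a convexly-ordered ultrametric space with distances in $S$, giving HP. For the joint embedding property one places two given spaces $\mathcal A,\mathcal B$ side by side in the order, declaring every cross distance to equal a value $t\in S$ with $t\ge\max\{\mathrm{diam}(\mathcal A),\mathrm{diam}(\mathcal B)\}$ (the maximum itself, when positive); the ultrametric inequality is then automatic and each ball of radius $<t$ stays inside one of the two pieces, so convexity is preserved. The only Fra\"iss\'e condition requiring real care is amalgamation, for which I would use the canonical ultrametric amalgam: given embeddings of $\mathcal A$ into $\mathcal B$ and into $\mathcal C$, identify the two copies of $\mathcal A$, keep the internal distances, and for $b\in\mathcal B\setminus\mathcal A$ and $c\in\mathcal C\setminus\mathcal A$ set
\[
d(b,c)=\min_{a\in\mathcal A}\max\{d(b,a),d(a,c)\}
\]
(using the JEP construction when $\mathcal A=\varnothing$). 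A short case check confirms that this is an ultrametric, that its values lie in $S$, and that $\mathcal B$ and $\mathcal C$ embed as substructures. The subtle point is to extend the orderings of $\mathcal B$ and $\mathcal C$, which already agree on $\mathcal A$, to a single convex ordering of the amalgam; I would do this level by level down the tree, interleaving the new branches coming from $\mathcal C$ into the sibling orders of $\mathcal B$ compatibly with the common order on $\mathcal A$, using convexity to see that the choices at distinct nodes do not interfere.

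The heart of the proof is the Ramsey property, and this is the step I expect to be the \emph{main obstacle}. I would prove it by induction on the number $m$ of distinct distance values used, exploiting the decomposition supplied by the tree picture: a space of top distance $s$ breaks into its maximal proper balls $P_1<P_2<\dots<P_t$, which are linearly ordered, pairwise at distance exactly $s$, and each of which is a convexly-ordered ultrametric space using strictly fewer distance values. The base case $m=1$ is exactly the class of finite linear orders, whose Ramsey property is the classical Ramsey theorem. For the inductive step one must find, given $\mathcal A\le\mathcal B$ and a colouring in $k$ colours of the copies of $\mathcal A$ in a large space $\mathcal C$, a copy of $\mathcal B$ on which the colouring is constant; here one has to control simultaneously three pieces of data --- the internal structure of each block (governed by the inductive hypothesis), the pattern describing how a copy of $\mathcal A$ distributes its points among the blocks (including the case where $\mathcal A$ lies entirely inside a single block), and the linear order on the blocks themselves (governed again by the Ramsey theorem for finite linear orders).

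The correct bookkeeping for combining these is provided by the product (Graham--Rothschild) Ramsey theorem, or equivalently, in packaged form, by the Milliken theorem on strong subtrees of a finite tree. Concretely, I would realize $\mathcal C$ as the leaf set of a sufficiently tall and wide labelled finite tree, so that copies of $\mathcal A$ correspond to appropriately embedded subtrees; the given colouring then induces a colouring of the relevant embeddings, to which the tree Ramsey theorem applies, yielding a monochromatic strong subtree from which the desired convex copy of $\mathcal B$ is extracted. The delicate part, and the reason the ordering is built into the signature, is arranging the construction so that the monochromatic object returned is order-correct and genuinely isomorphic to $\mathcal B$ as a convexly-ordered ultrametric space, rather than merely as an unordered tree; this is exactly where the interaction between the level-by-level tree Ramsey argument and the prescribed sibling orderings must be handled with care.
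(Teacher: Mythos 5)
The paper itself contains no proof of this statement: it is imported verbatim from Nguyen Van Th\'e \cite{NVT2} and used only in the concluding remarks, so there is no internal argument to compare yours against. Measured against the original proof in \cite{NVT2}, your outline follows what is essentially the same (and the standard) route: the dictionary between finite convexly-ordered ultrametric spaces and finite rooted trees with $S$-labelled levels and linearly ordered sibling sets; the resulting verification of HP and JEP; strong amalgamation via the min-max metric $d(b,c)=\min_{a\in A}\max\{d(b,a),d(a,c)\}$ (your order-extension step does work, and the reason it works is worth recording: any ball of the amalgam that meets both $\mathcal B\setminus\mathcal A$ and $\mathcal C\setminus\mathcal A$ must already meet $\mathcal A$, since the minimizing $a$ realizing $d(b,c)\le s$ lies in that ball; hence at each node of the tree one is merely amalgamating two linear orders on the sibling set over their common part, which is always possible); and finally the Ramsey property by induction on the number of distance values, with the classical Ramsey theorem as the base case and product-Ramsey-type arguments driving the inductive step.

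Two cautions about the inductive step, which is where your outline is thinnest. First, the product Ramsey theorem cannot be applied in one shot, because the coloring of $\prod_{j}\binom{D}{A_j}$ induced by a copy of $\mathcal A$ spread over $p$ blocks \emph{depends on which $p$-set of blocks is chosen}. One must first stabilize: iterate the (product) Ramsey property of the block class over all $\binom{N}{p}$ block patterns, shrinking a common tower of block structures $D\supseteq E_1\supseteq E_2\supseteq\cdots$ one pattern at a time (constancy survives further shrinking, so the iteration closes), until the color of a spread copy of $\mathcal A$ depends only on its block pattern; only then does the classical Ramsey theorem, applied to the induced coloring of $\binom{[N]}{p}$, yield $t$ blocks carrying a monochromatic copy of $\mathcal B$. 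The case where $\operatorname{diam}(\mathcal A)$ is strictly below the top distance must be treated separately (jointly embed $B_1,\dots,B_t$ into a single block structure $\tilde B$ with $\tilde B\to(\mathcal A)$-monochromatic pieces, then pigeonhole over blocks). Second, your remark that Milliken's theorem ``packages'' this is loose: copies of $\mathcal A$ in $\mathcal C$ are level-, meet-, and order-preserving subtrees, not strong subtrees, so Milliken's theorem does not apply off the shelf; the product-Ramsey induction you describe is the tool that actually carries the argument, and with the stabilization step above your proposal is correct.
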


   Denoting the Fra\"iss\'e limit of this class by $B_S^{c<}$, by the KPT correspondence,  ${\rm Aut}(B_S^{c<})$ is extremely amenable. We note that if $S$ and $T$ are countable sets of positive real numbers that are order-isomorphic, then ${\rm Aut}(B_S^{c<})$ and ${\rm Aut}(B_T^{c<})$ are isomorphic as topological groups. 
  
  \begin{question} Suppose $S$ and $T$ are countable sets of positive real numbers such that ${\rm Aut}(B_S^{c<})$ and ${\rm Aut}(B_T^{c<})$ are isomorphic as topological groups. Are $S$ and $T$ order-isomorphic?
  \end{question}
  
   If the answer is positive, then we would obtain another proof of Theorem~\ref{reductions}.

We note that there is a prominent difference between Theorems~\ref{thm:main2} and \ref{thm:main5}, namely, in Theorem~\ref{thm:main2} we have the equivalence between the isomorphism of the topological groups and the isomorphism of them as abstract groups, while in Theorem~\ref{thm:main5} we do not have this equivalence. The reason for this equivalence to hold in Theorem~\ref{thm:main2} is that $\mbox{\rm Aut}(\mathbb{U}_\Delta)$ has ample generics (see Corollary 4.1 of Solecki \cite{Solecki}), i.e., $G$ has a comeager orbit in the conjugacy action of $G$ on $G^n$, for each $n \geq 1$. By a result of Kechris--Rosendal \cite{KR}, if a Polish group has ample generics, then every abstract homomorphism into a separable group is automatically continuous and thus it has a unique Polish topology. However, for the ordered case, Slutsky \cite{Slutsky} showed that ${\rm Aut}(\mathbb{U}_\Delta^<)$ does not have ample generics. Hence the following question remains.

\begin{question} Let $\Delta$ and $\Lambda$ be countable distance values sets such that $\mbox{\rm Aut}(\mathbb{U}^<_\Delta)$ and $\mbox{\rm Aut}(\mathbb{U}^<_\Lambda)$ are isomorphic as abstract groups. Are $\Delta$ and $\Lambda$ equivalent?
\end{question}

Of course, the main question left open by this paper is the following.

\begin{question} What is the exact complexity of $\cong_{\mathsf{EA}}$ in the Borel reducibility hierarchy?
\end{question} 

Recall from the Introduction that there are many examples of extremely amenable Polish groups of the form $L_0(\mu, G)$, and in particular many of them are abelian. However, there is currently no known method to distinguish their isomorphism types. Thus we still have the following question.

\begin{question} Are threre continnum many pairwise non-isomorphic extremely amenable abelian Polish groups?
\end{question}

%\section*{Declarations}

%\noindent{\bf Funding.} M. E. did not receive support from any organization for the submitted work. S. G. and F. L. received partial financial support from the National Natural Science Foundation of China (NSFC) under Grant Agreements No. 12271263 and No. 12250710128. R. L.. received partial financial support from the National Natural Science Foundation of China (NSFC) under Grant Agreements No. 124B2001. 

%\bigskip

%\noindent{\bf Financial interests.} The authors have no financial or proprietary interests in any material discussed in this article.

%\bigskip
%\noindent{\bf Data availability statement.} There is no data associated to this article.

\bibliographystyle{amsplain}

\thebibliography{999}

\bibitem{AusBook}
J. Auslander, 
Minimal Flows and Their Extensions,
North Holland, 1988.

\bibitem{CMMRS}
F. Calderoni, D. Marker, L. Motto Ros, and A. Shani,
\textit{Anti-classification results for groups acting freely on the line},
Adv. Math. {\bf 418} (2023), 108938.

\bibitem{CV}
P. Cameron and A. Vershik,
\textit{Some isometry groups of the Urysohn space},
Ann. Pure Appl. Log. {\bf 143} (2006), no. 1--3, 70--78.

\bibitem{DGY}
W. Dai, S. Gao, and V. H. Ya\~{n}ez,
\textit{Isometry groups and countable groups with the L\'{e}vy property}, preprint, 2025, arXiv:6909567.

\bibitem{EGLMM}
M. Etedadialiabadi, S. Gao, F. Le Ma\^{\i}tre, and J. Melleray,
\textit{Dense locally finite subgroups of automorphism groups of ultraextensive spaces},
Adv. Math. {\bf 391} (2021), 107966.

\bibitem{FS2008}
I. Farah and S. Solecki,
\textit{Extreme amenability of $L_0$, a Ramsey theorem, and L\'{e}vy groups},
J. Funct. Anal. {\bf 255} (2008), 471--493.

\bibitem{Fraisse}
R. Fra\"iss\'e, 
\textit{Sur l’extension aux relations de quelques propri\'et\'es des ordres},
Ann. Sci. Ecole Norm. Sup. {\bf 71} (1954), no. 3, 363--388. 

\bibitem{GaoBook}
S. Gao,
Invariant Descriptive Set Theory,
Monographs and Textbooks in Pure and Applied Mathematics, vol. 293,
CRC Press, 2009.

\bibitem{GP}
T. Giordano and V. G. Pestov,
\textit{Some extremely amenable groups related to operator algebras and ergodic theory},
J. Inst. Math. Jussieu {\bf 6} (2007), no. 2, 279--315.

\bibitem{Glasner}
E. Glasner,
\textit{On minimal actions of Polish groups},
Topology Appl. {\bf 85} (1998), no. 1--3, 119--125. 

\bibitem{Gould}
F. R. Gould, 
\textit{On certain classes of minimally almost periodic groups}. Doctoral Disser
tation, Wesleyan University (Connecticut, USA), 2009.

\bibitem{GM}
M. Gromov and V. D. Milman,
\textit{A topological application of the isoperimetric inequality},
Amer. J. Math. {\bf 105} (1983), no. 4, 843--854.

\bibitem{HC}
W. Herer and J. P. R. Christensen,
\textit{On the existence of pathological submeasures and the construction of exotic topological groups},
Math. Ann. {\bf 213} (1975), 203--210.

\bibitem{HKL}
G. Hjorth, A.S. Kechris, and A. Louveau, 
\textit{Borel equivalence relations induced by actions of the symmetric 
group}, Ann. Pure Appl. Log. {\bf 92} (1998), no. 1, 63--112.

\bibitem{Hodges}
W. Hodges, 
Model Theory. Encyclopedia of Mathematics and its Applications,vol. 42.
 Cambridge University Press, Cambridge, 1993.

\bibitem{KNT}
A. S. Kechris, A. Nies, and K. Tent, 
\textit{The complexity of topological group isomorphism},
J. Symb. Log. {\bf 83} (2018), no. 3, 1190--1203.

\bibitem{KPT}
A. S. Kechris, V. G. Pestov, and S. Tordocevic,
\textit{Fra\"{\i}ss\'{e} limits, Ramsey theory, and topological dynamics of automorphism groups},
Geom. Funct. Anal. {\bf 15} (2005), no. 1, 106--189.

\bibitem{KR}
A. S. Kechris and C. Rosendal,
\textit{Turbulence, amalgamation, and generic automorphisms of homogeneous structures},
Proc. London Math. Soc. (3) {\bf 94} (2007), no. 2, 302--350.

\bibitem{MT}
J. Melleray and T. Tsankov,
\textit{Extremely amenable groups via continuous logic},
preprint, 2014,  arXiv:1404.4590v1.

\bibitem{Mitchell}
T. Mitchell, 
\textit{Topological semigroups and fixed points},
Illinois J. Math. {\bf 14} (1970), 630--641.

\bibitem{Nes07}
J. Ne\v{s}et\v{r}il,
\textit{Metric spaces are Ramsey},
European J. Combin. {\bf 28} (2007), no. 1, 457--468.

\bibitem{NR77}
J. Ne\v{s}et\v{r}il and V. R\"{o}dl,
\textit{Partitions of finite relational and set systems},
J. Combin. Theory Ser. A {\bf 22} (1977), no. 3, 289--312.

\bibitem{NR83}
J. Ne\v{s}et\v{r}il and V. R\"{o}dl,
\textit{Ramsey classes of set systems},
J. Combin. Theory Ser. A {\bf 34} (1983), no. 2, 183--201.

\bibitem{Nies}
A. Nies (eds.), 
\textit{Logic Blog 2020 (the 10th anniversary blog)}. Manuscript, 2021, arXiv:2101.09508v1.

\bibitem{NVT2}
L. Nguyen Van Th\'{e},
\textit{Ramsey degrees of finite ultrametric spaces, ultrametric Urysohn spaces and dynamics of their isometry groups},
European J. Combin. {\bf 30} (2009), no. 4, 934--945.

\bibitem{NVTMAMS}
L. Nguyen Van Th\'e,
\textit{Structural Ramsey theory of metric spaces and topological dynamics of isometry groups},
Mem. Amer. Math. Soc. 206 (2010), no. 968, ix+ 140pp.

\bibitem{NVT}
L. Nguyen Van Th\'{e}, 
\textit{A survey on structural Ramsey theory and topological dynamics with the Kechris--Pestov--Todorcevic correspondence in mind},
Zb. Rad. (Beogr.) {\bf 17} (2015), no. 25, 189--207.

\bibitem{Pestov98}
V. G. Pestov, 
\textit{On free actions, minimal flows, and a problem by Ellis},
Trans. Amer. Math. Soc. {\bf 350} (1998), no. 10, 4149--4165.

\bibitem{Pestov02}
V. G. Pestov,
\textit{Ramsey--Milman phenomenon, Urysohn metric spaces, and extremely amenable groups},
Israel J. Math. {\bf 127} (2002), 317--357.

\bibitem{Pestov05}
V. G. Pestov, 
Dynamics of Infinite-Dimensional Groups and Ramsey-Type Phenomena,
Publ. Mat. IMPA (IMPA Mathematical Publications), Instituto Nacional de Matem\'{a}tica Pura e Aplicada (IMPA), Rio de Janeiro, 2005.

\bibitem{Pestov06}
V. G. Pestov,
Dynamics of Infinite-Dimensional Groups, 
Univ. Lecture Ser. vol. 40, American Mathematical Society, Providence, RI, 2006.

\bibitem{Marcin}
M. Sabok,
\textit{Extreme amenability of abelian $L_0$ groups},
J. Funct. Anal. {\bf 263} (2012), no. 10, 2978--2992.

\bibitem{Slutsky}
K. Slutsky,
\textit{Non-genericity phenomena in ordered Fra\"iss\'e classes},
J. Symb. Log. {\bf 77} (2012), no. 3, 987--1000.

\bibitem{Solecki}
S. Solecki,
\textit{Extending partial isometries},
Israel J. Math. {\bf 150} (2005), 315--331.

\bibitem{Veech}
W. Veech,
\textit{Topological dynamics},
Bull. Amer. Math. Soc. {\bf 83} (1977), no. 5, 775--830.

\bibitem{Vershik}
A. Vershik,
\textit{Random metric spaces and universality},
Russian Math. Surveys {\bf 59} (2004), no. 2, 259--295.
\end{document}